\newcommand{\Max}{\operatorname{Max}}
\newtheorem{theorem}{Theorem}[section]
\newtheorem{proposition}[theorem]{Proposition}
\newtheorem{lemma}[theorem]{Lemma}
\newtheorem{corollary}[theorem]{Corollary}
\theoremstyle{definition}
\newtheorem{definition}{Definition}[section]
\numberwithin{equation}{section}
\theoremstyle{remark}
\newtheorem{remark}[theorem]{Remark}
\begin{document}
	
\title{Strong Hollowness in Commutative Rings}
	
\author{Amartya Goswami}
\address{[1]  Department of Mathematics and Applied Mathematics, University of Johannesburg, P.O. Box 524, Auckland Park 2006, South Africa. [2]  National Institute for Theoretical and Computational Sciences (NITheCS), South Africa.}
\email{agoswami@uj.ac.za}

\author{Joseph Israel Zelezniak}

\address{Department of Mathematics and Applied Mathematics, University of Cape Town, Rondebosch 7700, South Africa}
\email{ZLZJOS001@myuct.ac.za}
	
\subjclass{13A15, 13C13, 13F99}



\keywords{Strongly hollow ideal, Jacobson radical,  discrete valuation ring}
	
\begin{abstract}
In this paper we study strongly hollow ideals and completely strongly hollow ideals in commutative rings without finiteness
assumptions. We establish basic structural properties, including maximality phenomena and permanence under quotients and
surjective homomorphisms. We obtain several characterizations of completely strongly hollow ideals in terms of extremal ideals
avoiding a given ideal, and we show that a strongly hollow ideal which is not contained in the Jacobson radical is necessarily
completely strongly hollow. As applications, we derive strong restrictions in integral domains and consequences for principal
ideal domains, including a discrete valuation ring criterion. We develop the connection between complete hollowness and
complete irreducibility and obtain a correspondence between completely strongly hollow ideals and completely strongly irreducible
ideals. Finally, we develop a condition related to greatest common divisors which is equivalent to strongly hollowness under mild finiteness conditions.
\end{abstract}

\maketitle 

\section{Introduction}
Let $R$ be a commutative ring with identity. Following Rostami \cite{Rostami2021}, an ideal $I$ of $R$ is called
\emph{strongly hollow} if for all ideals $A,B$ of $R$,
\[
I\subseteq A+B \quad\Longrightarrow\quad I\subseteq A \ \text{or}\ I\subseteq B,
\]
and \emph{completely strongly hollow} if for every family of ideals $\{J_\omega\}_{\omega\in\Omega}$ of $R$,
\[
I\subseteq \sum_{\omega\in\Omega}J_\omega \quad\Longrightarrow\quad I\subseteq J_{\omega_0},
\ \text{for some}\ \omega_0\in\Omega.
\]
These “hollowness” conditions capture a rigidity phenomenon for containment in sums and were initiated, in the present form in \cite{Rostami2021}.
Subsequent work has developed further variants and applications, including connections with topological constructions and additional finiteness hypotheses; see, for instance,
\cite{Ceken-Yuksel} and the references therein.

A recurring theme in the study of hollowness is that the behaviour of the family of ideals \emph{not}
containing $I$ controls both structural and localization-theoretic properties of $I$.
Accordingly, for an ideal $I$ we consider the ideal
\[
\Gamma_I \ :=\ \sum_{\,I\not\subseteq J} J,
\]
and, when convenient, the associated colon ideal
\[
L_I \ :=\ \Gamma_I : I.
\]
For strongly hollow ideals, these objects measure how far $I$ is from being forced into a given ideal:
they also provide a bridge to the theory of strongly irreducible and completely irreducible ideals
(\textit{cf}.\ \cite{FHO,HRR2002}).

The first part of Section~\ref{GR} develops general existence properties for strongly hollow ideals.
We show that inside any fixed ideal $I$ there is either no strongly hollow ideal at all, or there is an inclusion-maximal one
(Theorem~\ref{max_sh}).
We then study how hollowness behaves under standard constructions: colon ideals with respect to non-zero divisors
(Proposition~\ref{colon_ideal}), passage to quotients including explicit formulae for $\Gamma$ and $L$ which refine previous characterizations,
and images/preimages under surjective homomorphisms, where a “small kernel” hypothesis recovers hollowness in the source.

A central goal of the rest of Section~\ref{GR} is to clarify the relationship between strongly hollow and completely strongly hollow ideals
without imposing finiteness assumptions. This allows us to extend many notions developed in \cite{Rostami2021}, which were previously only considered for finitely generated ideals.
We give two complementary characterizations of completely strongly hollow ideals:
(i) in terms of being the least ideal (by inclusion) that fails containment in a suitable ideal (Theorem~\ref{least}),
and (ii) in terms of the existence of a greatest ideal avoiding $I$ (Theorem~\ref{Gamma}).
These characterizations sharpen and extend the ring results of \cite{Rostami2021} and interact cleanly with arithmetical hypotheses.
We also isolate a particularly effective criterion in terms of the Jacobson radical:
if a strongly hollow ideal is not contained in $J(R)$, then it is automatically completely strongly hollow and detects a unique maximal ideal
(Theorem~\ref{escapes_jacobian}).

The end of Section~\ref{GR} shows that strong hollowness imposes severe restrictions in classical settings.
For integral domains, we prove that every proper strongly hollow ideal is contained in $J(R)$ (Theorem~\ref{int_domain_radical}),
yielding, in particular, that a semi-primitive domain admits a strongly hollow ideal only in the field case (Corollary~\ref{cont}).
In the principal ideal domain setting this culminates in a sharp DVR criterion:
a PID admits a proper strongly hollow ideal if and only if it is a discrete valuation ring (Proposition~\ref{PID_to_DVR}).

We then relate strong hollowness to the depth of maximal ideals.
If $I\subseteq J(R)$ is strongly hollow, then at every maximal $M$ with $\dim_{R/M}(M/M^2)\ge 2$ one has $I\subseteq M^2$
(Theorem~\ref{I_in_M2}),
and more generally, there is at most one maximal ideal for which $I$ fails containment in some power $M^n$
(Proposition~\ref{comax_powers}).

In Section~\ref{3} we make precise the duality between complete hollowness and complete irreducibility.
In the local case, completely strongly hollow ideals correspond bijectively to strongly irreducible ideals $J$ with $J\neq (J:M)$
(Theorem~\ref{bijection}), and we extend this to an intrinsic bijection between completely strongly hollow ideals and completely strongly
irreducible ideals in arbitrary rings (Theorem~\ref{true_bij}), in the spirit of \cite{FHO,HRR2002}.
In the Noetherian and Artinian settings, we extract further consequences and reformulations (including $P$-primary strongly irreducible ideals and
finite-intersection phenomena).
Section~\ref{4} introduces a GCD-ring condition ($\star$) ensuring strong hollowness under mild hypotheses.

\smallskip
\section{General Results}
\label{GR}

\begin{theorem}\label{max_sh}
Let $R$ be a ring and $I$ be an ideal of $R$. Then exactly one of the following holds:
\begin{enumerate}
\item[\textup{(a)}] There is no strongly hollow ideal contained in $I$.
\item[\textup{(b)}] There exists an (inclusion-)maximal strongly hollow ideal $H$ with $H\subseteq I$.
\end{enumerate}
\end{theorem}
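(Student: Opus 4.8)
The plan is to apply Zorn's Lemma to the collection of strongly hollow ideals sitting inside $I$. First I would observe that (a) and (b) are manifestly mutually exclusive, so the only content is to prove that the failure of (a) forces (b); that is, if there exists at least one strongly hollow ideal contained in $I$, then there exists an inclusion-maximal one.

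To this end, let $\mathcal{S}$ denote the set of all strongly hollow ideals $H$ with $H\subseteq I$, partially ordered by inclusion, and assume $\mathcal{S}\neq\varnothing$. I would verify the hypothesis of Zorn's Lemma by showing that every chain in $\mathcal{S}$ admits an upper bound in $\mathcal{S}$. Given a nonempty chain $\{H_\alpha\}_{\alpha\in A}$ in $\mathcal{S}$, the natural candidate is the union $H:=\bigcup_{\alpha}H_\alpha$; since the union of a chain of ideals is again an ideal, and each $H_\alpha\subseteq I$, we have that $H$ is an ideal with $H\subseteq I$. (The empty chain is handled by any fixed element of the nonempty set $\mathcal{S}$.)

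The crux is to verify that $H$ is itself strongly hollow, and here the key point is that the hollowness condition refers only to a sum of \emph{two} ideals. Suppose $H\subseteq A+B$ for ideals $A,B$ of $R$ but, for contradiction, $H\not\subseteq A$ and $H\not\subseteq B$. Choose $a\in H\setminus A$ and $b\in H\setminus B$. Since $H$ is the union of the chain, there are indices with $a\in H_{\alpha_1}$ and $b\in H_{\alpha_2}$, and because $\{H_\alpha\}$ is totally ordered one of these members contains the other; letting $H_\beta$ be the larger one, we get $a,b\in H_\beta$. Then $H_\beta\subseteq H\subseteq A+B$ while $H_\beta\not\subseteq A$ and $H_\beta\not\subseteq B$, contradicting the strong hollowness of $H_\beta$. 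Hence $H\subseteq A$ or $H\subseteq B$, so $H\in\mathcal{S}$ is an upper bound for the chain. Zorn's Lemma then yields a maximal element of $\mathcal{S}$, which is the desired maximal strongly hollow ideal contained in $I$.

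I expect the only subtle point to be this last verification, and specifically the recognition that, because strong hollowness is phrased through two-term sums, a single member of the chain captures both witnesses $a$ and $b$ simultaneously. Without the finitary nature of the condition one could not localize the obstruction into a single $H_\beta$, so this is where the argument really turns; everything else is routine.
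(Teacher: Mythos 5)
Your proof is correct and follows essentially the same route as the paper: Zorn's Lemma applied to the set of strongly hollow ideals inside $I$, with the union of a chain verified to be strongly hollow by locating both witnesses $a\in H\setminus A$ and $b\in H\setminus B$ in a single member of the chain. The remark about the two-term (finitary) nature of the condition being what makes this localization possible is exactly the right observation, and the argument is complete.
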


\begin{proof}
Let
\[
S\;:=\;\{\,K \text{ an ideal of } R\mid\ K\subseteq I\ \text{and }K\text{ is strongly hollow}\,\}.
\]
If $S=\varnothing$, we are in case \textup{(a)}, and there is nothing to prove.
Assume $S\neq\varnothing$. To apply Zorn's lemma, let $\widetilde S\subseteq S$ be a non-empty chain
(with respect to inclusion) and set
\[
H\;:=\;\bigcup_{K\in\widetilde S} K.
\]
Because $\widetilde S$ is a chain, $H$ is an ideal, and clearly $H\subseteq I$.
We claim that $H$ is strongly hollow. Suppose $H\subseteq A+B$ for ideals $A$, $B$ of $R$.
Assume, for contradiction, that $H\nsubseteq A$ and $H\nsubseteq B$.
Pick $x\in H\setminus A$ and $y\in H\setminus B$, so $x\in K_x$ and $y\in K_y$, for some $K_x$, $K_y\in\widetilde S$.
Since $\widetilde S$ is a chain, either $K_x\subseteq K_y$ or $K_y\subseteq K_x$.
Without loss of generality, suppose $K_x\subseteq K_y$. Hence $x$, $y\in K_y$ and $K_y\subseteq A+B$.
Because $K_y$ is strongly hollow, $K_y\subseteq A$ or $K_y\subseteq B$.
If $K_y\subseteq A$, then $x\in A$ giving a contradiction; if $K_y\subseteq B$, then $y\in B$, which gives a contradiction.
Thus, one of $H\subseteq A$ or $H\subseteq B$ must hold, proving that $H$ is strongly hollow.
Therefore, $H\in S$ and $H$ is an upper bound in $S$ for the chain $\widetilde S$.
By Zorn's lemma, $S$ has a maximal element $H_{\max}$ with $H_{\max}\subseteq I$.
This gives case \textup{(b)}.
\end{proof}

\begin{proposition}
    Let $I$ be a completely strongly hollow ideal of a ring $R$. Then $I\cap \Gamma_I$ is the greatest ideal strictly contained in $I$ and $I+\Gamma_I$ is the least ideal strictly containing $\Gamma_I$. 
\end{proposition}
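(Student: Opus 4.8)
The plan is to extract a single structural fact from complete strong hollowness and then feed it into two short, essentially symmetric extremality arguments. The key observation I would establish first is that $I\not\subseteq\Gamma_I$. Indeed, $\Gamma_I=\sum_{I\not\subseteq J}J$ is a sum of ideals each of which fails to contain $I$; if $I$ were contained in this sum, then complete strong hollowness would force $I\subseteq J_0$ for one of the summands $J_0$, contradicting $I\not\subseteq J_0$. Consequently $\Gamma_I$ itself belongs to its own defining family and dominates every member of it, so $\Gamma_I$ is the greatest ideal not containing $I$; this is precisely the content of Theorem~\ref{Gamma}, which I would cite rather than reprove.

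For the first assertion I would argue as follows. Clearly $I\cap\Gamma_I\subseteq I$, and the inclusion is strict because $I\not\subseteq\Gamma_I$ forces $I\cap\Gamma_I\neq I$. To see that it is the \emph{greatest} ideal strictly contained in $I$, take any ideal $K\subsetneq I$. Then $I\not\subseteq K$, since otherwise $K\subseteq I\subseteq K$ would give $K=I$; hence $K$ is one of the ideals summed in $\Gamma_I$, so $K\subseteq\Gamma_I$. Combined with $K\subseteq I$ this yields $K\subseteq I\cap\Gamma_I$, which is exactly the required maximality.

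For the second assertion I would proceed dually. We have $\Gamma_I\subseteq I+\Gamma_I$, and the inclusion is strict for the same reason: $I\not\subseteq\Gamma_I$ rules out $I+\Gamma_I=\Gamma_I$. To prove minimality, let $L$ be any ideal with $\Gamma_I\subsetneq L$. Since $\Gamma_I$ is the greatest ideal avoiding $I$, the strictly larger ideal $L$ cannot avoid $I$, so $I\subseteq L$; together with $\Gamma_I\subseteq L$ this gives $I+\Gamma_I\subseteq L$, establishing that $I+\Gamma_I$ is the least ideal strictly containing $\Gamma_I$.

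I do not expect a substantial obstacle here: both halves are short once the opening observation is in place, and the only point that genuinely uses \emph{complete} strong hollowness (rather than mere strong hollowness) is the fact $I\not\subseteq\Gamma_I$. The one place warranting a little care is the strictness of the two inclusions, but in both cases this reduces to the single inequality $I\not\subseteq\Gamma_I$, so there is nothing delicate left to check.
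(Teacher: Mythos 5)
Your proof is correct and follows essentially the same route as the paper's: the core step in both is that any ideal $K\subsetneq I$ satisfies $I\not\subseteq K$ and hence $K\subseteq\Gamma_I$, with the dual observation handling the second claim. You are in fact somewhat more careful than the paper's two-line proof, which leaves implicit both the strictness of the inclusions $I\cap\Gamma_I\subsetneq I$ and $\Gamma_I\subsetneq I+\Gamma_I$ and the underlying fact $I\not\subseteq\Gamma_I$ --- the one place where complete strong hollowness is actually used.
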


\begin{proof}
    Let $J$ be an ideal of $R$. If $J\subset I$, then $I\not\subseteq J$, and thus $J\subseteq \Gamma_I$, which gives that $J\subseteq I\cap\Gamma_I$. The same argument gives the rest of the claim. 
\end{proof}

\begin{proposition}\label{colon_ideal}
    Let $I$ be an ideal of $R$ and $a\in R$ such that $a$ is a non-zero divisor. If $Ra\cap I$ is strongly hollow (completely strongly hollow) then $I:Ra$ is strongly hollow (completely strongly hollow).
\end{proposition}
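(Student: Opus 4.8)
The plan is to realize $I:Ra$ and $Ra\cap I$ as isomorphic $R$-modules through multiplication by $a$, and then transport the hollowness condition across this isomorphism. Concretely, let $\mu_a\colon R\to R$ denote the $R$-linear map $\mu_a(x)=xa$. Since $a$ is a non-zero divisor, $\mu_a$ is injective. Unwinding the definition of the colon ideal, $I:Ra=\{x\in R\mid xa\in I\}$, and therefore $\mu_a(I:Ra)=\{xa\mid xa\in I\}=Ra\cap I$. Thus $\mu_a$ restricts to an $R$-module isomorphism $I:Ra\;\xrightarrow{\;\sim\;}\;Ra\cap I$.

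First I would record the two bookkeeping facts that make the transport work. For any ideal $K$ of $R$, the image $Ka=\mu_a(K)=\{xa\mid x\in K\}$ is again an ideal, being the image of the ideal $K$ under the $R$-module homomorphism $\mu_a$; and since $\mu_a$ is additive we have $\mu_a(A+B)=Aa+Ba$ and, for an arbitrary family, $\mu_a\!\big(\sum_{\omega}J_\omega\big)=\sum_{\omega}J_\omega a$. The second ingredient is injectivity: for any subset $K\subseteq R$ one has $\mu_a^{-1}(\mu_a(K))=K$, so $\mu_a$ both preserves and reflects inclusions.

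Next I would treat the two cases uniformly. Suppose first that $Ra\cap I$ is strongly hollow and that $I:Ra\subseteq A+B$. Applying $\mu_a$ and using the bookkeeping facts gives $Ra\cap I=\mu_a(I:Ra)\subseteq\mu_a(A+B)=Aa+Ba$. By strong hollowness, $Ra\cap I\subseteq Aa$ or $Ra\cap I\subseteq Ba$; say the former. Then $\mu_a(I:Ra)\subseteq\mu_a(A)$, and applying $\mu_a^{-1}$ (legitimate by injectivity) yields $I:Ra\subseteq A$. Hence $I:Ra$ is strongly hollow. The completely strongly hollow case is identical, replacing $A+B$ by an arbitrary family $\sum_{\omega}J_\omega$ and invoking the completely strongly hollow property of $Ra\cap I$ to extract a single index $\omega_0$ with $Ra\cap I\subseteq J_{\omega_0}a$, whence $I:Ra\subseteq J_{\omega_0}$.

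The argument is mostly formal once the isomorphism is in place; the one point demanding care, and the step I would flag as the main obstacle, is the reflection of inclusions, namely deducing $I:Ra\subseteq A$ from $Ra\cap I\subseteq Aa$. This is exactly where the non-zero-divisor hypothesis is essential: without injectivity of $\mu_a$ the equality $\mu_a^{-1}(\mu_a(A))=A$ can fail, and one could have $\mu_a(I:Ra)\subseteq\mu_a(A)$ without $I:Ra\subseteq A$. Checking that each $Aa$, $Ba$, and $J_\omega a$ is genuinely an ideal, so that the hollowness hypothesis on $Ra\cap I$ applies to the decomposition at hand, is the remaining routine verification.
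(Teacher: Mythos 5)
Your proof is correct and follows essentially the same route as the paper: both arguments rest on the identity $(I:Ra)a = Ra\cap I$, transport the containment through multiplication by $a$, apply the hollowness hypothesis, and then cancel $a$ using the non-zero-divisor assumption. The only cosmetic difference is that you justify the cancellation step via injectivity of $\mu_a$, whereas the paper phrases it as the colon-ideal computation $(J_\omega a : a) = J_\omega + (0:a) = J_\omega$; these are the same observation.
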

\begin{proof}
    Suppose that $(I:Ra)\subseteq \sum_{\omega\in\Omega}J_\omega$, for some family of ideals $\{J_\omega \}_{\omega\in\Omega}$. Then \[Rb\cap Ra=(I:Ra)a\subseteq \sum_{\omega\in\Omega}J_\omega a,\] where the first equality is shown in \cite[Lemma.~2.5]{HRR2002} for principal ideals but holds for all ideals $I$. By assumption, there exists $\omega\in\Omega$ such that $I\cap Ra\subseteq J_\alpha a$. Taking the colon ideal with respect to $Ra$ on both sides again gives $(I\cap Ra:Ra)\subseteq (J_\omega a:Ra)$. This is equivalent to $(I:Ra)\subseteq (J_\omega a:a)=J_\omega +(0:a)=J_\omega$.
\end{proof}

\begin{proposition}
Let $I$, $J$ be ideals of $R$ such that $I\not\subseteq J$. Then $(I+J)/J$ is a strongly hollow ideal in $R/J$ if $I$ is a strongly hollow ideal in $R$. In this case, \[\Gamma_{(I+J)/J}=\Gamma_I/J\quad \text{and}\quad L_{(I+J)/J}=L_I/J.\]
\end{proposition}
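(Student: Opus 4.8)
The plan is to work through the order-preserving bijection between ideals of $R/J$ and ideals of $R$ containing $J$, under which an ideal $\bar K$ of $R/J$ is $K/J$ for a unique $K\supseteq J$. First I would record the two facts that make the displayed formulas well defined. Since $I\nsubseteq J$, the ideal $J$ itself is one of the summands in $\Gamma_I=\sum_{I\nsubseteq K}K$, so $J\subseteq\Gamma_I$; and since $JI\subseteq J\subseteq\Gamma_I$ we also get $J\subseteq L_I=\Gamma_I:I$. Hence both $\Gamma_I/J$ and $L_I/J$ are legitimate ideals of $R/J$. For the strong hollowness claim, I would take $(I+J)/J\subseteq \bar A+\bar B$, write $\bar A=A/J$, $\bar B=B/J$ with $A,B\supseteq J$, and pull back to $I+J\subseteq A+B$, whence $I\subseteq A+B$. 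Strong hollowness of $I$ gives $I\subseteq A$ or $I\subseteq B$; say $I\subseteq A$, and then $I+J\subseteq A$ because $J\subseteq A$, i.e.\ $(I+J)/J\subseteq\bar A$. This settles the first assertion.

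The substantive step is the formula for $\Gamma$. Under the correspondence, the ideals $\bar K$ of $R/J$ with $(I+J)/J\nsubseteq\bar K$ are exactly the $K/J$ with $J\subseteq K$ and $I\nsubseteq K$ (using that $I+J\subseteq K\iff I\subseteq K$ when $J\subseteq K$). Thus $\Gamma_{(I+J)/J}=\Gamma'/J$, where $\Gamma':=\sum_{J\subseteq K,\ I\nsubseteq K}K$. The inclusion $\Gamma'\subseteq\Gamma_I$ is immediate, since the index family for $\Gamma'$ is a subfamily of that for $\Gamma_I$. For the reverse inclusion I would take any $K$ with $I\nsubseteq K$ and consider $K+J$: if we had $I\subseteq K+J$, then strong hollowness of $I$ would force $I\subseteq K$ or $I\subseteq J$, contradicting $I\nsubseteq K$ and the hypothesis $I\nsubseteq J$. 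Hence $I\nsubseteq K+J$, so $K+J$ is an admissible index for $\Gamma'$ and $K\subseteq K+J\subseteq\Gamma'$. Therefore $\Gamma_I\subseteq\Gamma'$, giving $\Gamma'=\Gamma_I$ and $\Gamma_{(I+J)/J}=\Gamma_I/J$.

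For $L$, I would invoke the standard identity $(A/J:B/J)=(A:B)/J$ for ideals $A,B\supseteq J$ (valid here since $J(I+J)\subseteq J\subseteq\Gamma_I$). Applying it with $A=\Gamma_I$ and $B=I+J$, together with the already-established $\Gamma_{(I+J)/J}=\Gamma_I/J$, gives $L_{(I+J)/J}=(\Gamma_I:(I+J))/J$. Because $J\subseteq\Gamma_I$ we have $rJ\subseteq\Gamma_I$ for every $r\in R$, so $r(I+J)\subseteq\Gamma_I\iff rI\subseteq\Gamma_I$; that is, $(\Gamma_I:(I+J))=(\Gamma_I:I)=L_I$, and the formula $L_{(I+J)/J}=L_I/J$ follows.

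The main obstacle is the reverse inclusion $\Gamma_I\subseteq\Gamma'$ in the second paragraph. Without the strong hollowness of $I$ one only obtains $\Gamma_{(I+J)/J}\subseteq\Gamma_I/J$ in general, since enlarging an index $K$ to $K+J$ could swallow $I$ and eject it from the defining family; it is precisely strong hollowness combined with $I\nsubseteq J$ that rules this out and forces equality. The remaining manipulations are routine applications of the correspondence and colon-quotient identities.
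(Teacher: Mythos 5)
Your proposal is correct and follows essentially the same route as the paper: pull back along the correspondence between ideals of $R/J$ and ideals of $R$ containing $J$, use strong hollowness of $I$ together with $I\nsubseteq J$ to show that $I\nsubseteq K+J$ whenever $I\nsubseteq K$ (which is exactly the paper's key step for the reverse inclusion on $\Gamma$), and then compute $L$ via the colon-quotient identity and $(\Gamma_I:(I+J))=(\Gamma_I:I)$. Your packaging through the auxiliary ideal $\Gamma'=\sum_{J\subseteq K,\ I\nsubseteq K}K$ is a slightly cleaner bookkeeping of the same two inclusions the paper proves directly.
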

\begin{proof}
Let $S$, $T$ be two ideals of $R$ such that $S$, $T\supseteq J$ and $(I+J)/J\subseteq ((S/J)+(T/J))$. Then, we have that $(I+J)/J\subseteq (S+T)/J$, and hence, $I\subseteq I+J\subseteq S+T$. As $I$ is a strongly hollow ideal of $R$, we have that $I\subseteq S$ or $I\subseteq T$. This in turn gives that $I+J\subseteq S+J=S$ or $I+J\subseteq T+J=T$. We conclude that $(I+J)/J\subseteq S/J$ or $(I+J)/J\subseteq T/J$. 

Now suppose $I\not\subseteq S$. By way of obtaining a contradiction, suppose that $I+J\subset S+J$. Since $I\subseteq I+J\subseteq S+J$ and $I$ is a strongly hollow ideal, we must have that $I\subseteq J$ or $I\subseteq S$, which is a contradiction. Therefore, $I+J\not\subseteq S+J$ and $(I+J)/J\not\subseteq (S+J)/J$. Then $(S+J)/J\subseteq \Gamma_{(I+J)/J} $ and since $S$ was an arbitrary ideal not containing $I$, we have that \[\sum_{I\not\subseteq S}(S+J)/J=\left(\sum_{I\not\subseteq S}S+J\right)/J=(\Gamma_I+J)/J.\] But since $I\not\subseteq J$, we have that $J\subseteq \Gamma_I$. Therefore $\Gamma_I/J\subseteq \Gamma_{(I+J)/J}$. Now let $S/J$ be an ideal in $R/J$ such that $(I+J)/J\not\subseteq S/J$. Then we have that $I+J\not\subset S$, and hence, $I\not\subseteq S$. By definition, we have that $S\subseteq \Gamma_I$, and therefore, $S/J\subseteq\Gamma_I/J$. Since $S/J$ was an arbitrary ideal such that $S/J\not\supseteq (I+J)/J$, we have that $\Gamma_{(I+J)/J}\subseteq\Gamma_I/J$. We conclude that $\Gamma_{(I+J)/J}=\Gamma_I/J$. For the colon ideal we can calculate it directly. $$L_{(I+J)/J}=(\Gamma_{(I+J)/J}):\left((I+J)/J\right)=(\Gamma_I/J):((I+J)/J)=(\Gamma_I:I+J)/J.$$
Finally, we have \[\Gamma_I:(I+J=(\Gamma_I:I)\cap(\Gamma_I:J)=\Gamma_I:I=L_I.\] We conclude that $L_{(I+J)/J}=L_I/J$. 
\end{proof}
This generalizes the results \cite[Proposition.~3.21]{Ceken-Yuksel} and \cite[Proposition.~1.19]{Rostami2021} from completely strongly hollow ideals ( respectively, principally generated strongly hollow ideals ) to arbitrary strongly hollow ideals. The result additionally does away with an unnecessary factor in the colon ideal. The study of arbitrary strongly hollow ideals in comparison to those that are principally generated is significantly underdeveloped.

In fact we can show a more general and stronger result that surjective ring homomorphisms preserve strongly hollow ideals. 
\begin{proposition}
Let $\varphi:R\to R'$ be a surjective ring homomorphism and let $I$ be a strongly hollow ideal of $R$ such that $I\not\subseteq \ker\varphi$. Then $\varphi(I)$ is strongly hollow in $R'$. The same result holds when one replaces strongly hollow with completely strongly hollow.
\end{proposition}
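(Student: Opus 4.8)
The plan is to exploit the lattice correspondence between the ideals of $R'$ and the ideals of $R$ containing $\ker\varphi$: transport any decomposition of $\varphi(I)$ back to $R$ via $\varphi^{-1}$, apply strong hollowness there, and then push the conclusion forward again by $\varphi$. First I would record the two correspondence identities that drive everything. Since $\varphi$ is surjective, $\varphi(\varphi^{-1}(Y))=Y$ for every ideal $Y$ of $R'$, and surjectivity also guarantees that $\varphi(I)$ is genuinely an ideal of $R'$; on the other hand, for every ideal $X$ of $R$ one has $\varphi^{-1}(\varphi(X))=X+\ker\varphi$, so that $\varphi^{-1}(\varphi(X))=X$ precisely when $\ker\varphi\subseteq X$. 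I would also note that $I\not\subseteq\ker\varphi$ forces $\varphi(I)\neq 0$, so that the image is a nonzero candidate for hollowness.

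For the strongly hollow assertion, suppose $\varphi(I)\subseteq A'+B'$ for ideals $A',B'$ of $R'$, and set $A:=\varphi^{-1}(A')$ and $B:=\varphi^{-1}(B')$. Both contain $\ker\varphi$, hence so does $A+B$, and $\varphi(A+B)=\varphi(A)+\varphi(B)=A'+B'$. Applying $\varphi^{-1}$ and using the kernel-containment identity gives $\varphi^{-1}(A'+B')=A+B$, so $\varphi(I)\subseteq A'+B'$ yields $I\subseteq A+B$. Strong hollowness of $I$ now delivers $I\subseteq A$ or $I\subseteq B$, and applying $\varphi$ together with $\varphi(\varphi^{-1}(A'))=A'$ returns $\varphi(I)\subseteq A'$ or $\varphi(I)\subseteq B'$, which is what is required.

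The completely strongly hollow case runs verbatim with an arbitrary family $\{J'_\omega\}_{\omega\in\Omega}$ of ideals of $R'$ in place of the pair $A',B'$: put $J_\omega:=\varphi^{-1}(J'_\omega)$, observe that $\sum_{\omega}J_\omega\supseteq\ker\varphi$ and $\varphi\bigl(\sum_{\omega}J_\omega\bigr)=\sum_{\omega}J'_\omega$, deduce $I\subseteq\sum_{\omega}J_\omega$ as before, apply complete strong hollowness of $I$ to obtain $I\subseteq J_{\omega_0}$ for some $\omega_0$, and push forward to $\varphi(I)\subseteq J'_{\omega_0}$.

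I do not expect a deep obstacle here; the only point demanding care is the bookkeeping with the kernel. The identity $\varphi^{-1}(\varphi(X))=X$ fails in general and holds only because each pullback $A$, $B$ (respectively each $J_\omega$) automatically contains $\ker\varphi$; one must check that this containment is inherited by the relevant sum, so that $\varphi^{-1}(A'+B')$ is exactly $A+B$ (respectively $\varphi^{-1}\bigl(\sum_{\omega}J'_\omega\bigr)=\sum_{\omega}J_\omega$) rather than a strictly larger ideal. Finally, it is worth flagging in the write-up that the hypothesis $I\not\subseteq\ker\varphi$ plays no role in the implication itself and is needed only to certify that the image $\varphi(I)$ is nonzero, hence a genuine strongly hollow ideal rather than the degenerate zero ideal.
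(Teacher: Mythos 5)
Your argument is correct and follows essentially the same route as the paper: pull the decomposition back along $\varphi^{-1}$, use that each preimage contains $\ker\varphi$ so that $\varphi^{-1}(A'+B')=\varphi^{-1}(A')+\varphi^{-1}(B')$, apply (complete) strong hollowness of $I$ in $R$, and push forward by surjectivity. The only divergence is in the completely strongly hollow case, where the paper appeals to the fact that such ideals are finitely generated and that surjective images of finitely generated ideals are finitely generated, whereas you simply rerun the preimage argument for an arbitrary family --- which is arguably cleaner and avoids invoking that characterization.
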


\begin{proof}
Suppose $\varphi(I)\subseteq J'+K'$ for ideals $J'$, $K'$ of $ R'$. Taking preimages, we have
\[
  I \subseteq \varphi^{-1}(J'+K')=\varphi^{-1}(J')+\varphi^{-1}(K').
\]
Since $I$ is strongly hollow, either $I\subseteq \varphi^{-1}(J')$ or $I\subseteq \varphi^{-1}(K')$. Hence $\varphi(I)\subseteq J'$ or $\varphi(I)\subseteq K'$. Since the surjective image of a finitely generated ideal is also finitely generated, we have that the same result holds for completely strongly hollow ideals.  
\end{proof}

The following result can be seen as dual to the fact that surjective ring homomorphisms preserve prime ideals which contain the kernel. It shows that the preimage of a strongly hollow ideal under a surjective ring homomorphism is strongly hollow in the sublattice of ideals containing the kernel.

\begin{proposition}\label{ker-preimage}
Let $\varphi:R\to R'$ be surjective and let $J'$ be a strongly hollow ideal of $ R'$. Then, for all ideals $A$, $B$ of $R$,
\[
  \varphi^{-1}(J')\subseteq A+B \ \Rightarrow\ \varphi^{-1}(J')\subseteq A+\ker\varphi \ \ \text{or}\ \ \varphi^{-1}(J')\subseteq B+\ker\varphi.
\]
\end{proposition}

\begin{proof}
From $\varphi^{-1}(J')\subseteq A+B$ and surjectivity, we get
$J'\subseteq \varphi(A)+\varphi(B)$. Since $J'$ is strongly hollow, $J'\subseteq \varphi(A)$ or $J'\subseteq \varphi(B)$. Taking preimages gives $H\subseteq A+\ker\varphi$ or $H\subseteq B+\ker\varphi$.
\end{proof}

\begin{definition}
A submodule $N$ of a module $M$ is \emph{small} (written $N\ll M$) if $N+L=M$ implies $L=M$ for every submodule $L\subseteq M$.
\end{definition}

\begin{theorem}
Let $\varphi:R\to R'$ be surjective, $J'$ strongly hollow in $R'$, and $H:=\varphi^{-1}(J')$. If $\ker\varphi\ll H$, then $H$ is strongly hollow in $R$. The same result holds when replacing strongly hollow with completely strongly hollow.
\end{theorem}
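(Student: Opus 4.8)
The plan is to combine Proposition~\ref{ker-preimage} with the smallness hypothesis, the point being that smallness is precisely the tool needed to delete the kernel from the containments that proposition produces. First I would record the elementary but essential observation that $\ker\varphi\subseteq H$: indeed $H=\varphi^{-1}(J')$ and $0\in J'$, so $\ker\varphi=\varphi^{-1}(0)\subseteq\varphi^{-1}(J')=H$. This makes $\ker\varphi$ a genuine submodule of $H$, so the hypothesis $\ker\varphi\ll H$ is meaningful.

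The heart of the argument is the following stripping lemma: if $H\subseteq A+\ker\varphi$ for an ideal $A$ of $R$, then in fact $H\subseteq A$. To prove it I would set $L:=A\cap H$, which is a submodule of $H$. Given $h\in H$, write $h=a+k$ with $a\in A$ and $k\in\ker\varphi$; since $h\in H$ and $k\in\ker\varphi\subseteq H$, we get $a=h-k\in A\cap H=L$, whence $h\in L+\ker\varphi$. Thus $L+\ker\varphi=H$, and the smallness of $\ker\varphi$ in $H$ forces $L=H$, that is, $H\subseteq A$.

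With this in hand the strongly hollow case is immediate. Suppose $H\subseteq A+B$. By Proposition~\ref{ker-preimage} we have $H\subseteq A+\ker\varphi$ or $H\subseteq B+\ker\varphi$, and the stripping lemma upgrades either alternative to $H\subseteq A$ or $H\subseteq B$. For the completely strongly hollow case I would run the same pattern with an arbitrary family: if $H\subseteq\sum_{\omega}J_\omega$, then applying $\varphi$ and using surjectivity gives $J'=\varphi(H)\subseteq\sum_\omega\varphi(J_\omega)$; since $J'$ is completely strongly hollow, $J'\subseteq\varphi(J_{\omega_0})$ for some $\omega_0$, and taking preimages yields $H\subseteq\varphi^{-1}(\varphi(J_{\omega_0}))=J_{\omega_0}+\ker\varphi$. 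The stripping lemma then delivers $H\subseteq J_{\omega_0}$.

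The only genuine obstacle is the stripping lemma, i.e.\ correctly exploiting smallness; everything else is formal manipulation with preimages and the earlier proposition. The subtlety there is that the definition of smallness applies only to submodules of $H$, not to an arbitrary ideal $A$ of $R$, so one cannot apply it to $A$ directly. The trick is to intersect the containment with $H$ to manufacture the submodule $L=A\cap H\subseteq H$ satisfying $L+\ker\varphi=H$, to which the definition does apply.
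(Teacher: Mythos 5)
Your proof is correct and follows essentially the same route as the paper: your ``stripping lemma'' is exactly the paper's modular-law step $H=(A+\ker\varphi)\cap H=(A\cap H)+\ker\varphi$ followed by smallness, just with the relevant inclusion verified elementwise instead of quoted. You in fact go slightly further than the paper by explicitly running the argument for an arbitrary family to handle the completely strongly hollow case, which the paper leaves implicit.
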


\begin{proof}
Assume $H\subseteq A+B$. By Proposition~\ref{ker-preimage}, $H\subseteq A+\ker\varphi$ or $H\subseteq B+\ker\varphi$. Without a loss of generality, assume $H\subseteq A+\ker\varphi$. Intersect with $H$ and use the modular law (since $\ker\varphi\subseteq H$):
\[
  H=(A+\ker\varphi)\cap H=(A\cap H)+\ker\varphi.
\]
Because $\ker\varphi\ll H$, we must have $A\cap H=H$. Therefore, \ $H\subseteq A$. The other case is symmetric.
\end{proof}

\begin{theorem}\label{least}
    Let $R$ be a ring and $I$ an ideal of $R$. Then $I$ is a completely strongly hollow ideal if and only if there exists an ideal $K$ such that $I$ is the least (by inclusion) ideal with respect to not being contained by $K$.
\end{theorem}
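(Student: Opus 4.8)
The plan is to prove both implications using the ideal $\Gamma_I=\sum_{I\not\subseteq J}J$ introduced earlier, which is the obvious candidate for the witnessing ideal $K$ in the forward direction. The guiding observation is that the ideals \emph{not} contained in $\Gamma_I$ should be exactly the ideals that \emph{do} contain $I$, so that $I$ sits at the bottom of that family.

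For the direction $(\Rightarrow)$, assume $I$ is completely strongly hollow and set $K:=\Gamma_I$. First I would verify that $I\not\subseteq\Gamma_I$. If instead $I\subseteq\Gamma_I=\sum_{I\not\subseteq J}J$, then applying the completely strongly hollow hypothesis to the defining family $\{J : I\not\subseteq J\}$ produces some index with $I\subseteq J$ yet $I\not\subseteq J$, a contradiction. Next, I would show $I$ is least among ideals failing containment in $\Gamma_I$: if $J$ is any ideal with $J\not\subseteq\Gamma_I$, then $J$ cannot satisfy $I\not\subseteq J$, since every ideal failing to contain $I$ is by definition one of the summands of $\Gamma_I$ and hence lies inside $\Gamma_I$; therefore $I\subseteq J$. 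Combining these two facts gives that $I$ is the least ideal not contained in $K=\Gamma_I$.

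For the converse $(\Leftarrow)$, suppose $K$ is an ideal for which $I$ is the least ideal not contained in $K$, and suppose $I\subseteq\sum_{\omega\in\Omega}J_\omega$ for some family $\{J_\omega\}_{\omega\in\Omega}$. Arguing by contradiction, assume $I\not\subseteq J_\omega$ for every $\omega$. Minimality of $I$ then forces $J_\omega\subseteq K$ for each $\omega$: if some $J_\omega\not\subseteq K$, the leastness of $I$ would yield $I\subseteq J_\omega$, contrary to assumption. But then $\sum_{\omega}J_\omega\subseteq K$, whence $I\subseteq K$, contradicting $I\not\subseteq K$. Hence $I\subseteq J_{\omega_0}$ for some $\omega_0$, so $I$ is completely strongly hollow.

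The argument is short and largely symmetric, so I do not anticipate a serious obstacle; the one step that genuinely carries weight is the verification $I\not\subseteq\Gamma_I$ in the forward direction, which is precisely where the \emph{completely} strongly hollow hypothesis (arbitrary families, not merely pairs) is indispensable. A secondary point to handle with care is that the index set $\Omega$ in the converse may be infinite, so the proof must avoid any finiteness assumption and rely only on the containment $\sum_\omega J_\omega\subseteq K$ obtained termwise.
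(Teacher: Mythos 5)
Your proof is correct. The forward direction coincides with the paper's: take $K=\Gamma_I$ and observe that any ideal $J$ with $J\not\subseteq\Gamma_I$ must contain $I$, since every ideal failing to contain $I$ is a summand of $\Gamma_I$. You additionally verify $I\not\subseteq\Gamma_I$ explicitly, which the paper leaves implicit but which is genuinely needed for $I$ to qualify as ``the least ideal not contained in $K$''; this is a worthwhile addition. The converse is where you diverge: the paper only runs the containment argument for a pair of ideals, concluding that $I$ is strongly hollow, and then separately shows that $I$ is principal (picking $x\in I\setminus K$ and using leastness to get $I=Rx$) so that it can invoke the fact that a finitely generated strongly hollow ideal is completely strongly hollow. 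You instead run the same containment argument directly on an arbitrary family: if $I\not\subseteq J_\omega$ for all $\omega$, leastness forces each $J_\omega\subseteq K$, hence $\sum_\omega J_\omega\subseteq K$ and $I\subseteq K$, a contradiction. Your route is more streamlined, needs no auxiliary principality step or external lemma, and makes transparent that no finiteness hypothesis enters; the paper's route has the side benefit of exhibiting $I$ as a principal ideal, a fact it reuses elsewhere.
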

\begin{proof}
    Suppose $I$ is completely strongly hollow. Let $J$ be an ideal of $R$ such that $J\not\subseteq \Gamma_I$. By the definition of $\Gamma_I$ we have that $I\subseteq J$. Therefore, $I$ is the least ideal with respect to not being contained in $\Gamma_I$. Conversely, suppose there exists $K$ such that $I$ is the least ideal with respect to not being contained in $K$. Let $S$, $T$ be two ideals such that $I\subseteq S+T$. If $I\not\subseteq S$ and $I\not\subseteq T$, then $S+T\subseteq K$ giving a contradiction. Therefore $I$ is strongly hollow. Let $x\in I$, if $x\not\in K$, then $I=(x)$ since it is the least ideal not contained in $K$. If for all $x\in I$ we have that $x\in K$, then $I\subseteq K$. Therefore, there exists $x\in R$ such that $I=Rx$. We conclude that $I$ is completely strongly hollow.
\end{proof}

We now generalize \cite[Theorem.~1.14]{Rostami2021} by showing that one can remove the assumption that $I$ is finitely generated. 
\begin{theorem}\label{Gamma}
    Let $I$ be a non-zero ideal of a ring $R$. Then $I$ is completely strongly hollow if and only if there exists the greatest ideal of $R$ with respect to not containing $I$.
\end{theorem}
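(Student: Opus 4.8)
The plan is to exhibit the ideal $\Gamma_I=\sum_{I\not\subseteq J}J$ as the candidate for the greatest ideal failing to contain $I$, and to obtain the converse directly from Theorem~\ref{least}.

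For the forward implication, suppose $I$ is completely strongly hollow. Since $I\neq 0$, the zero ideal does not contain $I$, so the family of ideals not containing $I$ is non-empty and $\Gamma_I$ is a genuine (non-empty) sum. By construction every ideal $J$ with $I\not\subseteq J$ satisfies $J\subseteq\Gamma_I$, so it suffices to verify that $\Gamma_I$ itself does not contain $I$; this is exactly where complete strong hollowness enters. Indeed, if $I\subseteq\Gamma_I=\sum_{I\not\subseteq J}J$, then complete strong hollowness yields $I\subseteq J_0$ for some ideal $J_0$ in the indexing family, that is, for some $J_0$ with $I\not\subseteq J_0$ --- a contradiction. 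Hence $I\not\subseteq\Gamma_I$, and $\Gamma_I$ is the greatest ideal with respect to not containing $I$.

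For the converse, suppose $G$ is the greatest ideal of $R$ not containing $I$. I would invoke Theorem~\ref{least} with $K:=G$: it suffices to show that $I$ is the least ideal not contained in $G$. By hypothesis $I\not\subseteq G$. If $J$ is any ideal with $J\not\subseteq G$, then $J$ cannot be one of the ideals failing to contain $I$ (every such ideal lies in $G$ by maximality of $G$), so $I\subseteq J$. Thus $I$ is the least ideal not contained in $G$, and Theorem~\ref{least} gives that $I$ is completely strongly hollow.

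The argument is short, and the only real subtlety --- the main obstacle --- is the forward direction's verification that $I\not\subseteq\Gamma_I$: one must resist assuming this and instead derive it from the defining property of complete strong hollowness applied to the (possibly infinite) family $\{J : I\not\subseteq J\}$. The non-zero hypothesis on $I$ is used precisely to guarantee this family is non-empty, so that $\Gamma_I$ is well-defined and the two notions align; I would flag this explicitly to cover the degenerate case.
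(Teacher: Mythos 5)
Your proof is correct and follows essentially the same route as the paper: the forward direction is the standard verification that $\Gamma_I$ works (which the paper outsources to a citation of Rostami), and your converse --- every ideal not containing $I$ lies in $G$, so no sum of such ideals can capture $I$ --- is exactly the paper's argument, merely repackaged through Theorem~\ref{least} instead of being run directly on an arbitrary family. Your explicit handling of the non-zero hypothesis (guaranteeing the family $\{J : I\not\subseteq J\}$ is non-empty) is a welcome touch the paper leaves implicit.
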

\begin{proof}
    The forward is shown in \cite[Theorem.~1.14]{Rostami2021}. Now suppose there exists a greatest ideal $J$ of $R$ with respect to not containing $I$. Then $I\not\subseteq J$ by definition. Suppose there exists a family of ideals $\{K_\alpha\}_{\alpha \in \Omega}$ such that $I\subseteq \sum_{\alpha \in \Omega}K_\alpha$ with $I\not\subseteq K_\alpha$ for all $\alpha \in \Omega$. Then we have that $K_\alpha \subseteq J$, for all $\alpha \in \Omega$. Therefore $\sum_{\alpha\in \Omega}K_\alpha \subseteq J$, which is a contradiction. Therefore $I\subseteq K_\alpha$, for some $\alpha \in \Omega$. Furthermore, in this case we have that the greatest ideal with respect to not containing $I$ is $\Gamma_I$. 
\end{proof}

\begin{remark}
This provides the elegant characterization of completely strongly hollow ideals in the following statement: An ideal $I$ of a ring $R$ is completely strongly hollow if and only if $I\not\subseteq \Gamma_I$. The first characterization can have `least' reduced to `minimal' when considering arithmetical rings. 	
\end{remark}

\begin{proposition}
    Let $R$ be an arithmetical ring and $I$ an ideal of $R$. Then $I$ is a completely strongly hollow ideal if and only if there exists an ideal $K$ such that $I$ is a minimal (by inclusion) ideal with respect to not being contained by $K$.
\end{proposition}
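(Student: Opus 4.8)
The forward implication needs no arithmetical hypothesis and is essentially free from Theorem~\ref{least}: if $I$ is completely strongly hollow, then by that theorem $I$ is the \emph{least} ideal failing to be contained in $K=\Gamma_I$, and a least element of a poset is in particular minimal. So the only genuine content is the converse, and the plan is to exhibit the distributivity of the ideal lattice (the arithmetical hypothesis) as precisely the tool that promotes minimality to the strong hollowness property, after which principality upgrades this to completeness.

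For the converse, suppose $K$ is such that $I$ is minimal among the ideals not contained in $K$; thus $I\not\subseteq K$, and every ideal properly contained in $I$ must lie in $K$. First I would show $I$ is strongly hollow. Given $I\subseteq S+T$, I would invoke that $R$ is arithmetical to write $I=I\cap(S+T)=(I\cap S)+(I\cap T)$. If both $I\not\subseteq S$ and $I\not\subseteq T$, then $I\cap S$ and $I\cap T$ are proper subideals of $I$, so by minimality each is contained in $K$; hence $I=(I\cap S)+(I\cap T)\subseteq K$, contradicting $I\not\subseteq K$. Therefore $I\subseteq S$ or $I\subseteq T$, and $I$ is strongly hollow.

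Next I would upgrade this to complete strong hollowness by showing $I$ is principal. Choosing any $x\in I\setminus K$, the ideal $Rx$ satisfies $Rx\subseteq I$ and $Rx\not\subseteq K$ (since $x\notin K$), so minimality forces $Rx=I$. A principal---indeed any finitely generated---strongly hollow ideal is completely strongly hollow, exactly as in the closing step of the proof of Theorem~\ref{least}: if $I=Rx\subseteq\sum_{\omega}J_\omega$, then $x$ lies in a finite subsum, so $I$ is contained in a finite sum of the $J_\omega$, and iterating strong hollowness places $I$ inside a single $J_\omega$. This completes the converse.

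The one genuinely load-bearing step is the distributive identity $I\cap(S+T)=(I\cap S)+(I\cap T)$: without the arithmetical hypothesis, minimality only governs subideals of $I$ and offers no grip on arbitrary sums $S+T$, so \emph{minimal} is strictly weaker than \emph{least} and the reduction breaks down. Everything else is bookkeeping with the minimality definition, and the principality argument is identical in spirit to the one already deployed for Theorem~\ref{least}.
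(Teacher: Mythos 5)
Your proof is correct and follows essentially the same route as the paper's: forward direction from Theorem~\ref{least} (least implies minimal), converse via the distributive identity $I\cap(S+T)=(I\cap S)+(I\cap T)$ combined with minimality to get strong hollowness, and then principality of $I$ (from $Rx=I$ for $x\in I\setminus K$) to upgrade to complete strong hollowness. You merely make explicit the distributivity step that the paper leaves implicit in the line ``$I\cap(S+T)\subseteq K$.''
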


\begin{proof}
    The forward direction remains unchanged. Suppose there exists an ideal $K$ such that $I$ is minimal with respect to not being contained in $K$. Let $S$, $T$ be two ideals of $R$ such that $I\subseteq S+T$. Suppose that $I\not\subseteq S$ and $I\not\subseteq T$. Then $I\cap S\subseteq K$ and $I\cap T\subseteq K$ by minimality of $I$. Hence $I\cap(S+T)\subseteq K$ giving $I\subseteq K$, a contradiction. Therefore, $I$ is strongly hollow. One can easily see that $I$ is a principal ideal by the same argument as in Theorem~\ref{least}. We conclude that $I$ is completely strongly hollow. 
\end{proof}

We can replace the requirement that $R$ be arithmetic with a tighter restriction on the ideals in question.	

\begin{corollary}
    Let $K$ be a strongly irreducible ideal of a ring $R$. If there exists an ideal $I$ minimal with respect to not being included in $K$, then $I$ is completely strongly hollow.
\end{corollary}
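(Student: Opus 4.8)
The plan is to upgrade the hypothesis that $I$ is \emph{minimal} with respect to not being contained in $K$ to the stronger statement that $I$ is the \emph{least} such ideal, and then to invoke Theorem~\ref{least}. Recall that $K$ strongly irreducible means $A\cap B\subseteq K$ forces $A\subseteq K$ or $B\subseteq K$; equivalently, in contrapositive form, if $A\not\subseteq K$ and $B\not\subseteq K$ then $A\cap B\not\subseteq K$. This closure-under-intersection property is precisely the tool that will replace the distributivity argument used in the arithmetical case of the preceding proposition.

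First I would fix an arbitrary ideal $I'$ with $I'\not\subseteq K$ and aim to show $I\subseteq I'$. Since $I\not\subseteq K$ by hypothesis and $I'\not\subseteq K$ by choice, strong irreducibility of $K$ yields $I\cap I'\not\subseteq K$. On the other hand, $I\cap I'$ is an ideal contained in $I$, and by the minimality of $I$ every ideal strictly smaller than $I$ lies in $K$; hence we cannot have $I\cap I'\subsetneq I$. Therefore $I\cap I'=I$, that is, $I\subseteq I'$.

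Since $I'$ was an arbitrary ideal not contained in $K$, this shows that $I$ is contained in every such ideal, so $I$ is the least ideal with respect to not being contained in $K$. Applying Theorem~\ref{least} then gives immediately that $I$ is completely strongly hollow, completing the argument.

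The only delicate point is the passage from \emph{minimal} to \emph{least}, and this is exactly where strong irreducibility is indispensable: it guarantees that the family of ideals not contained in $K$ is closed under pairwise intersection, which forces a minimal member of that family to be an absolute minimum. In the general setting, where $K$ need not be strongly irreducible and $R$ need not be arithmetical, this closure can fail, so the hypothesis on $K$ cannot simply be dropped.
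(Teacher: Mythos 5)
Your proof is correct, and it takes a cleaner route than the paper's. The paper proves the corollary by re-running the preceding proposition's argument inside the pairwise hollowness test: assuming $I\subseteq S+T$ with $I\not\subseteq S$, minimality gives $I\cap S\subseteq K$, strong irreducibility of $K$ then forces $S\subseteq K$, whence $T\not\subseteq K$, so $I\cap T\not\subseteq K$ and minimality yields $I\cap T=I$, i.e.\ $I\subseteq T$. As written, that argument only establishes strong hollowness for sums of two ideals, and the upgrade to \emph{completely} strongly hollow is left to the reader via the principality argument borrowed from Theorem~\ref{least}. You instead isolate the real content of the hypothesis: strong irreducibility of $K$ says exactly that the family of ideals not contained in $K$ is closed under pairwise intersection, so a minimal member $I$ of that family satisfies $I=I\cap I'\subseteq I'$ for every $I'$ in the family and is therefore the \emph{least} such ideal; Theorem~\ref{least} then delivers the full conclusion in one step. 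This is a genuinely different decomposition, and it buys two things: the ``completely'' part comes for free from the characterization theorem rather than needing a separate principality argument, and the role of strong irreducibility (turning ``minimal'' into ``least'') is made transparent, which also explains why the hypothesis on $K$ cannot be dropped in general.
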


\begin{proof}
 Let $I$ be an ideal minimal with respect to not being contained in $K$. Then, following the proof above, we arrive at $I\cap S\subseteq K$ giving $S\subseteq K$. Then we must have $T\not\subseteq K$, and hence, $I\cap T=I$. Therefore $I\subseteq T$.
\end{proof}
The previous two results also give rise to dual results for strongly irreducible ideals. We state the dual results below the proof of which is a straightforward exercise in redoing the previous proofs in the dual lattice of ideals. 
\begin{corollary}
    Let $R$ be an arithmetical ring and $K$ and ideal of $R$. Then $K$ is a strongly irreducible ideal if and only if there exists an ideal $I$ such that $K$ is a maximal (by inclusion) ideal with respect to not containing  $I$.
\end{corollary}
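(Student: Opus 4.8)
The cleanest route is to obtain the statement as the image of the preceding arithmetical-ring characterisation of completely strongly hollow ideals under the order-reversing bijection on the ideal lattice of $R$, which swaps $\subseteq$ with $\supseteq$ and $+$ with $\cap$. Since $R$ is arithmetical its ideal lattice is distributive, and distributivity passes to the opposite lattice, so the distributive step of that proof survives dualization. Under this dictionary a strongly hollow ideal becomes a strongly irreducible ideal, ``minimal with respect to not being contained in $K$'' becomes ``maximal with respect to not containing $I$'', and the ideal $\Gamma_I=\sum_{I\not\subseteq J}J$ becomes its dual $\bigcap\{J:J\not\subseteq K\}$. I would transport the two halves of that proof one at a time.

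For the backward direction, assume there is an ideal $I$ for which $K$ is maximal among ideals not containing $I$; thus $I\not\subseteq K$, and every ideal properly containing $K$ contains $I$. Given $A\cap B\subseteq K$, suppose toward a contradiction that $A\not\subseteq K$ and $B\not\subseteq K$. Then $K+A$ and $K+B$ strictly contain $K$, so maximality gives $I\subseteq K+A$ and $I\subseteq K+B$, whence $I\subseteq(K+A)\cap(K+B)$. Distributivity yields $(K+A)\cap(K+B)=K+(A\cap B)\subseteq K$, so $I\subseteq K$, a contradiction; hence $A\subseteq K$ or $B\subseteq K$, and $K$ is strongly irreducible. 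This is exactly the dual of the identity $I\cap(S+T)=(I\cap S)+(I\cap T)$ used above, and it invokes arithmeticity in the very same place.

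For the forward direction I would mimic the choice $K=\Gamma_I$ and set $I:=\bigcap\{J:J\not\subseteq K\}$. The contrapositive of the defining property is immediate: if an ideal $J$ satisfies $I\not\subseteq J$ then $J$ cannot belong to the defining family, so $J\subseteq K$. Thus every ideal failing to contain $I$ is already contained in $K$, which makes $K$ the greatest, and \emph{a fortiori} a maximal, ideal not containing $I$ — \emph{provided} we know $I\not\subseteq K$. Establishing $I\not\subseteq K$ is the crux: it asserts that the intersection of all ideals not contained in $K$ is again not contained in $K$, which is precisely the dual of Theorem~\ref{Gamma} (and the analogue, on the irreducible side, of the ``greatest ideal'' characterisation underlying Theorem~\ref{least}).

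The main obstacle is this last point, and it is genuine. The finite (two-ideal) strong irreducibility condition does not by itself force $\bigcap\{J:J\not\subseteq K\}\not\subseteq K$; one needs the arbitrary-family version, namely that $\bigcap_{\omega}J_\omega\subseteq K$ implies $J_\omega\subseteq K$ for some $\omega$, which is the honest dual of complete strong hollowness. With that strengthened hypothesis the construction closes and $I=\bigcap\{J:J\not\subseteq K\}$ is the required witness; without it the argument can collapse — for instance the zero ideal of a discrete valuation ring with maximal ideal $P$ is strongly irreducible, yet admits no such $I$, since $\bigcap_n P^{\,n}=0$. I would therefore carry out the forward implication under the full family hypothesis, flagging that the substantive content of the equivalence lives entirely there and is supplied exactly by the dual of Theorem~\ref{Gamma}.
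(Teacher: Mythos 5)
Your backward direction is precisely the dualization the paper intends: the paper prints no proof at all for this corollary, saying only that the dual statements follow by ``redoing the previous proofs in the dual lattice of ideals,'' and your argument --- maximality of $K$ forces $I\subseteq K+A$ and $I\subseteq K+B$, after which distributivity gives $I\subseteq (K+A)\cap(K+B)=K+(A\cap B)\subseteq K$, contradicting $I\not\subseteq K$ --- is the faithful transport of the step $I\cap(S+T)=(I\cap S)+(I\cap T)$ from the arithmetical-ring proposition, with arithmeticity invoked at the same point. That half is correct and is all the paper could have meant.

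Your diagnosis of the forward direction is also correct, and it exposes a defect in the statement rather than in your argument. The proposition being dualized characterizes \emph{completely} strongly hollow ideals, so its honest dual characterizes completely strongly irreducible ideals (the condition that $\bigcap_{\omega}J_\omega\subseteq K$ forces some $J_\omega\subseteq K$), and the two-ideal condition is strictly weaker. Your discrete valuation ring example settles the matter: a DVR is arithmetical, the zero ideal is strongly irreducible because the ideals form a chain, yet any $I$ for which $(0)$ is maximal among ideals not containing $I$ would have to satisfy $I\subseteq\bigcap_n P^n=(0)$ while $I\not\subseteq(0)$, which is impossible. So the forward implication as printed is false; the corollary should either read ``completely strongly irreducible'' or retain only the backward implication. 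Under the completeness hypothesis your witness $I=\bigcap\{J : J\not\subseteq K\}$ does close the argument, exactly as the dual of Theorem~\ref{Gamma}, so your corrected version is the right statement and your proof of it is complete.
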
 
\begin{corollary}
      Let $I$ be a strongly hollow ideal of a ring $R$. If there exists an ideal $K$ maximal with respect to not containing $I$, then $K$ is strongly irreducible.
\end{corollary}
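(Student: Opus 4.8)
The plan is to argue by dualizing the proof of the preceding corollary, systematically trading sums for intersections and the relation $\subseteq$ for $\supseteq$. Recall that $K$ is \emph{strongly irreducible} precisely when, for all ideals $A,B$ of $R$, the containment $A\cap B\subseteq K$ forces $A\subseteq K$ or $B\subseteq K$; this is the lattice dual of the strong hollowness condition imposed on $I$. So I would begin by fixing ideals $A$ and $B$ with $A\cap B\subseteq K$ and assuming, toward a contradiction, that $A\not\subseteq K$ and $B\not\subseteq K$.

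The key device is to feed $K$ itself back into the maximality hypothesis. First I would observe that since $A\not\subseteq K$, the sum $A+K$ strictly contains $K$; because $K$ is maximal among ideals that do not contain $I$, this forces $I\subseteq A+K$. Strong hollowness of $I$ then yields $I\subseteq A$ or $I\subseteq K$, and since $I\not\subseteq K$ by the standing hypothesis on $K$, we must have $I\subseteq A$. Running the identical argument with $B$ in place of $A$ gives $I\subseteq B$. Combining the two, $I\subseteq A\cap B\subseteq K$, which contradicts $I\not\subseteq K$. Hence at least one of $A\subseteq K$, $B\subseteq K$ holds, and $K$ is strongly irreducible.

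I do not anticipate a genuine obstacle here: the argument is short, and the only point requiring care is the dualization bookkeeping. Specifically, one must remember to apply maximality to the \emph{enlarged} ideals $A+K$ and $B+K$ (the ideals $A$ and $B$ themselves need not contain $I$) and then use strong hollowness together with $I\not\subseteq K$ to descend back to $A$ and $B$. It is worth noting that this reasoning is insensitive to the number of ideals involved: replacing $A\cap B$ by an arbitrary intersection $\bigcap_{\omega\in\Omega}A_\omega\subseteq K$ and assuming each $A_\omega\not\subseteq K$ gives $I\subseteq A_\omega$ for every $\omega$ by the same step, hence $I\subseteq\bigcap_{\omega\in\Omega}A_\omega\subseteq K$, again a contradiction. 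Thus the method in fact delivers the completely strongly irreducible conclusion; I would record the statement as strong irreducibility as above and note this stronger version as an immediate consequence.
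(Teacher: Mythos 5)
Your proof is correct and is exactly the dualization the paper has in mind: the paper leaves this corollary as ``a straightforward exercise in redoing the previous proofs in the dual lattice of ideals,'' and your argument (feed $A+K$ into the maximality of $K$, then use strong hollowness of $I$ together with $I\not\subseteq K$ to get $I\subseteq A$) is precisely that dual argument carried out. Your closing observation that the same reasoning handles arbitrary intersections, so that $K$ is in fact completely strongly irreducible, is also valid, since strong hollowness is only ever invoked on the two-term sums $A_\omega+K$.
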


The following results give conditions on when a strongly hollow ideal is necessarily completely strongly hollow. One can also consider Theorem \ref{escapes_jacobian} to be an extension of one part of \cite[Theorem.~1.27]{Rostami2021} since it shows that the results follow for arbitrary strongly hollow ideals.

\begin{theorem}\label{escapes_jacobian}
    Let $I$ be a strongly hollow ideal of a ring $R$. Then $I$ is a completely strongly hollow ideal if $I\not\subseteq J(R)$. In this case, $\Gamma_I=M$, where $M$ is the unique maximal ideal of $R$ not containing $I$.
\end{theorem}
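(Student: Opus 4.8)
The plan is to first locate a maximal ideal that $I$ escapes, then show it is unique, and finally identify that ideal with $\Gamma_I$ so that the characterization ``$I$ completely strongly hollow $\iff I\not\subseteq\Gamma_I$'' from the remark following Theorem~\ref{Gamma} applies. Since $0\subseteq J(R)$, the hypothesis $I\not\subseteq J(R)$ already forces $I\neq 0$; and because $J(R)$ is the intersection of all maximal ideals, $I\not\subseteq J(R)$ guarantees at least one maximal ideal $M$ with $I\not\subseteq M$. This handles existence of the ideal $M$ named in the statement.

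For uniqueness I would argue by contradiction. Suppose $M_1\neq M_2$ are maximal ideals with $I\not\subseteq M_1$ and $I\not\subseteq M_2$. Distinct maximal ideals are comaximal (if $M_1+M_2=M_1$ then $M_2\subseteq M_1$ and maximality forces $M_2=M_1$), so $M_1+M_2=R\supseteq I$. Strong hollowness of $I$ then yields $I\subseteq M_1$ or $I\subseteq M_2$, contradicting the choice of $M_1,M_2$. Hence there is a \emph{unique} maximal ideal $M$ not containing $I$.

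The central step is the equality $\Gamma_I=M$. The inclusion $M\subseteq\Gamma_I$ is immediate, since $I\not\subseteq M$ makes $M$ one of the summands defining $\Gamma_I$. For the reverse inclusion I take an arbitrary ideal $J$ with $I\not\subseteq J$ and claim $J\subseteq M$. If not, then $J\not\subseteq M$, so maximality of $M$ gives $J+M=R\supseteq I$, and strong hollowness again forces $I\subseteq J$ or $I\subseteq M$, both impossible. Summing over all such $J$ gives $\Gamma_I\subseteq M$, whence $\Gamma_I=M$.

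Finally, since $I\not\subseteq M=\Gamma_I$, the remark after Theorem~\ref{Gamma} (equivalently Theorem~\ref{Gamma} itself, applicable because $I\neq 0$, with $M$ exhibited as the greatest ideal not containing $I$) shows that $I$ is completely strongly hollow. I expect the only genuinely delicate point to be the reverse inclusion $\Gamma_I\subseteq M$: it is tempting but \emph{insufficient} merely to observe that each such proper $J$ lies in some maximal ideal, since a priori that maximal ideal could differ from $M$. The comaximality-plus-hollowness argument is precisely what pins every ideal missing $I$ inside the single maximal ideal $M$, and it is there that both the uniqueness conclusion and the computation of $\Gamma_I$ really rely on the strongly hollow hypothesis.
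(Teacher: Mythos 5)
Your proof is correct and follows essentially the same route as the paper: establish the unique maximal ideal $M$ avoiding $I$ via comaximality plus strong hollowness, show $\Gamma_I=M$ by the same $J+M=R$ argument, and conclude via Theorem~\ref{Gamma}. The only difference is that you prove the existence and uniqueness of $M$ directly, whereas the paper outsources that step to a cited proposition; your argument for it is the standard one and is fine.
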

\begin{proof}
    Suppose $I\not\subseteq J(R)$. Then by \cite[Proposition.~3.8]{Ceken-Yuksel}, there exists a unique maximal ideal $M$ of $R$ such that $I\not\subseteq M$. Therefore $M\subseteq\Gamma_I$, and hence, $\Gamma_I=M$ or $\Gamma_I=R$. Suppose $\Gamma_I=R$. Then there exists an ideal $J$ of $R$ such that $I\not\subseteq J$ and $J\not\subseteq M$. Considering the sum $J+M=R\supseteq I$, we find that $I\subseteq J$ or $I\subseteq M$. Therefore $\Gamma_I=M$, which is a contradiction. 
\end{proof}

\begin{remark}
Theorem~\ref{escapes_jacobian} gives a weaker condition for all the results from \cite[Theoremm.~1.27]{Rostami2021}. It also shows that both \cite[Proposition.~2.14]{Rostami2021} and \cite[Proposition.~2.15]{Rostami2021} hold for arbitrary strongly hollow ideals rather than just completely strongly hollow ideals.
\end{remark}

Before we state the next result, we need to recall a definition. A \emph{semi-primitive} ring is a ring which has zero Jacobson radical.

\begin{corollary}
    Every strongly hollow ideal in a semi-primitive ring is completely strongly hollow. 
\end{corollary}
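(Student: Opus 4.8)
The plan is to derive this directly from Theorem~\ref{escapes_jacobian}, which is the immediate parent result. The key observation is that the definition of a \emph{semi-primitive} ring is precisely $J(R)=0$, so any nonzero ideal automatically satisfies the escape hypothesis $I\not\subseteq J(R)$.

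First I would take an arbitrary strongly hollow ideal $I$ of a semi-primitive ring $R$. I would immediately split into the trivial and non-trivial cases. If $I=0$, then $I$ is completely strongly hollow by vacuous or degenerate verification of the defining implication (the zero ideal is contained in every ideal, hence in any summand $J_{\omega_0}$, so one must check the convention in force; most naturally $I\subseteq\sum_\omega J_\omega$ already forces $0\subseteq J_{\omega_0}$ trivially). For the substantive case $I\neq 0$, since $J(R)=0$ we have $I\not\subseteq J(R)$, because a nonzero ideal cannot be contained in the zero ideal. Applying Theorem~\ref{escapes_jacobian} then yields that $I$ is completely strongly hollow, and in fact $\Gamma_I=M$ for the unique maximal ideal $M$ with $I\not\subseteq M$.

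The only subtlety to watch is the degenerate $I=0$ case, which is where the argument is least mechanical; the non-trivial content is entirely inherited from the preceding theorem. If the paper's conventions treat strongly hollow ideals as necessarily proper or nonzero (as is common in this literature, and as suggested by the recurring phrase ``proper strongly hollow ideal'' in the introduction), then this edge case can be dispensed with in one line, and the corollary reduces to a single invocation of Theorem~\ref{escapes_jacobian}. I do not anticipate any genuine obstacle here: the result is a clean specialization, and the entire proof should read as ``since $J(R)=0$, any nonzero strongly hollow ideal escapes the Jacobson radical, so Theorem~\ref{escapes_jacobian} applies.''
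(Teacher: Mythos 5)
Your proof is correct and matches the paper's intent exactly: the corollary is stated without proof immediately after Theorem~\ref{escapes_jacobian}, and the intended argument is precisely that $J(R)=0$ forces $I\not\subseteq J(R)$ for any strongly hollow $I$ (which the paper elsewhere takes to be nonzero by convention, as noted in the proof of Corollary~\ref{cont}). Your handling of the degenerate zero-ideal case is a harmless extra precaution.
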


\begin{proposition}\label{si_min}
    Let $I$ be a strongly hollow ideal of a ring $R$. If $I\not\subseteq J(R)$, then $I$ is minimal with respect to not being contained within $J(R)$.
\end{proposition}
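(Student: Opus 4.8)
The plan is to lean on Theorem~\ref{escapes_jacobian}, which already supplies a uniquely determined maximal ideal adapted to $I$. Since $I\not\subseteq J(R)$, that theorem (through \cite[Proposition.~3.8]{Ceken-Yuksel}) yields a unique maximal ideal $M$ with $I\not\subseteq M$; equivalently, every maximal ideal \emph{other} than $M$ contains $I$. Recall that minimality of $I$ with respect to not being contained in $J(R)$ means precisely: whenever $I'$ is an ideal with $I'\subseteq I$ and $I'\not\subseteq J(R)$, one has $I'=I$. So I would fix such an $I'$ and aim to prove $I'=I$.

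First I would pin down through which maximal ideal $I'$ escapes $J(R)$. Writing $J(R)$ as the intersection of all the maximal ideals of $R$, the assumption $I'\not\subseteq J(R)$ forces $I'\not\subseteq N_0$ for at least one maximal ideal $N_0$. Because $I'\subseteq I$, this immediately gives $I\not\subseteq N_0$ as well, and the uniqueness clause of Theorem~\ref{escapes_jacobian} then identifies $N_0=M$. Hence $I'\not\subseteq M$.

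Next I would invoke the comaximal-sum argument that already appears inside the proof of Theorem~\ref{escapes_jacobian}. Since $M$ is maximal and $I'\not\subseteq M$, we have $I'+M=R\supseteq I$. Applying strong hollowness of $I$ to the containment $I\subseteq I'+M$ gives $I\subseteq I'$ or $I\subseteq M$; the latter is impossible because $I\not\subseteq M$, so $I\subseteq I'$. Together with $I'\subseteq I$ this yields $I'=I$, which is exactly the claimed minimality.

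The only genuinely delicate point is the middle step. An a priori arbitrary subideal $I'$ escaping $J(R)$ could, in principle, escape through any one of the maximal ideals, whereas the final comaximality argument only closes off at the distinguished ideal $M$. The resolution is that the nesting $I'\subseteq I$ pushes the escape to occur precisely at $M$, and this is where the uniqueness assertion of Theorem~\ref{escapes_jacobian} does the real work; after that, everything reduces to the standard $I'+M=R$ decomposition used earlier in the section.
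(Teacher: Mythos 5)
Your proof is correct, but it runs along a slightly different track from the paper's. The paper argues directly: it takes an arbitrary proper subideal $J\subsetneq I$, notes that $I\not\subseteq J$ forces $J\subseteq\Gamma_I$, and then uses the conclusion $\Gamma_I=M$ of Theorem~\ref{escapes_jacobian} (together with the fact that $J\subseteq I\subseteq N$ for every maximal ideal $N\neq M$) to land $J$ inside $J(R)$. So the paper leans on the \emph{complete} strong hollowness of $I$ and the identification of $\Gamma_I$ with the distinguished maximal ideal. You instead argue contrapositively: a subideal $I'\subseteq I$ escaping $J(R)$ must escape through the unique maximal ideal $M$ not containing $I$ (this is the uniqueness statement from \cite{Ceken-Yuksel} quoted in the proof of Theorem~\ref{escapes_jacobian}), whence $I'+M=R\supseteq I$ and plain strong hollowness forces $I\subseteq I'$. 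Your route is marginally more economical in that it never needs $\Gamma_I$ or complete strong hollowness, only the uniqueness of $M$ and one application of the two-summand hollowness condition; the paper's route has the advantage of exhibiting explicitly where every proper subideal of $I$ sits, namely inside $\Gamma_I=M$ and hence inside $J(R)$. Both arguments are sound, and your identification of the "delicate point" (that $I'$ must escape precisely at $M$ because $I'\subseteq I$) is exactly the right place to be careful.
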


\begin{proof}
    Let $J$ be an ideal of $R$ such that $(0)\subset J\subseteq I$. If $J\not\subseteq \Gamma_I$, then $I\subseteq J$, and so $J=I$. Therefore, if $J\neq I$, we have that $J\subseteq J(R)$. We conclude that $I$ is minimal with respect to not being contained within $J(R)$.
\end{proof}

\begin{theorem}
    Let $I$ be an ideal of a ring $R$. If $I$ is minimal with respect to not being contained in $J(R)$, then $I$ is completely strongly hollow.
\end{theorem}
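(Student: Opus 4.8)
The plan is to produce, for the ideal $I$, a greatest ideal of $R$ with respect to not containing $I$, and then invoke Theorem~\ref{Gamma}. First note that the hypothesis already encodes $I\not\subseteq J(R)$, and since $(0)\subseteq J(R)$ this forces $I\neq(0)$, so Theorem~\ref{Gamma} is applicable once such a greatest ideal has been produced. The minimality hypothesis will be used in the following convenient reformulation: if $J$ is any ideal with $I\not\subseteq J$, then $I\cap J\subsetneq I$ (equality would give $I\subseteq J$), whence $I\cap J\subseteq J(R)$ by minimality.

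Next I would fix a maximal ideal $M$ with $I\not\subseteq M$; such an $M$ exists because $I\not\subseteq J(R)=\bigcap_{\mathrm{max}}M$. The central claim is that $M$ is the greatest ideal not containing $I$, that is, every ideal $J$ with $I\not\subseteq J$ satisfies $J\subseteq M$. Suppose towards a contradiction that $J\not\subseteq M$. Since $M$ is maximal, $J+M=R$, so we may write $1=j+m$ with $j\in J$ and $m\in M$. Then for every $a\in I$ we have $a=aj+am$, where $aj\in I\cap J$ and $am\in I\cap M$. By the reformulation of minimality, both $I\cap J\subseteq J(R)$ and $I\cap M\subseteq J(R)$, so $a\in J(R)$. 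As $a\in I$ was arbitrary, this gives $I\subseteq J(R)$, contradicting the hypothesis. Hence $J\subseteq M$.

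Since $M$ itself does not contain $I$ and contains every ideal not containing $I$, it is the greatest such ideal (and in fact $\Gamma_I=M$), so Theorem~\ref{Gamma} yields that $I$ is completely strongly hollow. The main obstacle is that $R$ is not assumed arithmetical, so the naive argument ``$I=I\cap(J+M)=(I\cap J)+(I\cap M)\subseteq J(R)$'' via the modular law is unavailable; the decomposition $a=aj+am$ obtained from the comaximality $J+M=R$ is precisely the device that replaces modularity and pins each summand into $J(R)$. A second, essentially equivalent route would be to first observe that any $x\in I\setminus J(R)$ generates $I$ (so $I=Rx$ is principal by minimality), verify directly that $I$ is strongly hollow by the same comaximal decomposition, and then apply Theorem~\ref{escapes_jacobian}; I expect the greatest-ideal route above to be the cleaner of the two.
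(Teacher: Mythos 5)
Your proof is correct, but it reaches the conclusion by a genuinely different mechanism than the paper. The paper reduces to showing $I$ is strongly hollow (then invokes Theorem~\ref{escapes_jacobian}): it fixes a maximal ideal $M$ with $I\not\subseteq M$, uses minimality to get $I\cap M\subseteq J(R)$, deduces from the primeness of every other maximal ideal $N$ that $I\subseteq N$, concludes that $I$ is minimal with respect to not being contained in $M$, and then appeals to the earlier corollary stating that an ideal minimal with respect to not being contained in a strongly irreducible ideal is completely strongly hollow (the proof as printed cites Proposition~\ref{si_min}, which is actually the converse statement; the intended reference is that corollary). Your route instead identifies $\Gamma_I$ outright: you show every ideal $J$ with $I\not\subseteq J$ lies in $M$ by writing $1=j+m$ from the comaximality $J+M=R$ and decomposing each $a\in I$ as $aj+am\in (I\cap J)+(I\cap M)\subseteq J(R)$, which would force $I\subseteq J(R)$; then Theorem~\ref{Gamma} finishes. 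The two arguments lean on different features of maximal ideals --- the paper on their strong irreducibility (primeness), you on comaximality --- and your element-wise decomposition is exactly the right substitute for the unavailable modular law, as you note. Your version is more self-contained (it bypasses the chain of intermediate corollaries and the mis-citation) and yields the explicit identity $\Gamma_I=M$ as a byproduct; the paper's version fits the statement into its running theme of minimality relative to strongly irreducible ideals. Your handling of the side conditions (nonzeroness of $I$ for Theorem~\ref{Gamma}, and the reformulation of minimality as $I\cap J\subseteq J(R)$ whenever $I\not\subseteq J$) is also correct.
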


\begin{proof}
    By Theorem~\ref{escapes_jacobian}, we  only need to show that $I$ is strongly hollow. There must exists a maximal ideal $M$ such that $I\not\subseteq M$ since $I\not\subseteq J(R)$. Therefore, by the minimality of $I$, we have $I\cap M\subseteq J(R)$. Since all maximal ideals are strongly irreducible, we can conclude that $I\subseteq N$, for all $N\in\mathrm{Max}(R)\textbackslash\{M\}$. Then $I$ is minimal with respect to not being contained in $M$. We conclude by Proposition \ref{si_min} that $I$ is strongly hollow. 
\end{proof}

\begin{proposition}
    Let $I\not\subseteq J(R)$ be an ideal of a ring $R$ and $a\in R$ a non-zero divisor. If $I\cap Ra$ is strongly hollow, then $I=(I:Ra)$ and $I$ is a completely strongly hollow ideal. In this case, we have that $Ia=I\cap Ra$
\end{proposition}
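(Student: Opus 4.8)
The plan is to route the argument through the colon ideal $(I:Ra)$. The hypotheses directly control $I\cap Ra$, and Proposition~\ref{colon_ideal} converts this into information about $(I:Ra)$; the goal is then to trap $I$ against $(I:Ra)$ using the minimality phenomenon for strongly hollow ideals that escape the Jacobson radical. First I would record the elementary inclusion $I\subseteq (I:Ra)$: for $x\in I$ we have $x\cdot Ra=Rxa\subseteq I$ since $I$ is an ideal, so $x\in (I:Ra)$. Next, since $I\cap Ra$ is strongly hollow and $a$ is a non-zero divisor, Proposition~\ref{colon_ideal} gives that $(I:Ra)$ is strongly hollow. Because $I\subseteq (I:Ra)$ and $I\not\subseteq J(R)$, we also obtain $(I:Ra)\not\subseteq J(R)$.

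The key step is to apply Proposition~\ref{si_min} to $(I:Ra)$. As it is strongly hollow and not contained in $J(R)$, it is minimal with respect to not being contained in $J(R)$; that is, every ideal properly contained in $(I:Ra)$ lies inside $J(R)$. But $I\subseteq (I:Ra)$ with $I\not\subseteq J(R)$ (note $I\ne 0$), so $I$ cannot be a proper subideal, and we are forced to conclude $I=(I:Ra)$. With this equality in hand, $I$ is strongly hollow and escapes $J(R)$, so Theorem~\ref{escapes_jacobian} immediately yields that $I$ is completely strongly hollow.

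For the remaining identity $Ia=I\cap Ra$, I would establish (or invoke, as the generalization of \cite[Lemma~2.5]{HRR2002} already used in Proposition~\ref{colon_ideal}) the equality $(I:Ra)\,a=I\cap Ra$, which holds because $a$ is a non-zero divisor: the inclusion $\subseteq$ is immediate, while for $\supseteq$ any $y=ra\in I\cap Ra$ satisfies $ra\in I$, hence $r\in (I:Ra)$ and $y\in (I:Ra)\,a$. Substituting the equality $I=(I:Ra)$ then gives $Ia=I\cap Ra$.

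I expect the main obstacle to be precisely the equality $I=(I:Ra)$: the hypotheses only constrain $I\cap Ra$, and it takes the combination of the colon transfer (Proposition~\ref{colon_ideal}), the escape-from-$J(R)$ dichotomy (Theorem~\ref{escapes_jacobian}), and the minimality statement (Proposition~\ref{si_min}) to upgrade ``$I$ sits below a completely strongly hollow ideal'' into genuine equality. The product identity and the completeness conclusion are then routine.
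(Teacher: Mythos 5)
Your argument is correct and follows essentially the same route as the paper: transfer strong hollowness to $(I:Ra)$ via Proposition~\ref{colon_ideal}, note $I\subseteq (I:Ra)\not\subseteq J(R)$, invoke Theorem~\ref{escapes_jacobian} and the minimality statement of Proposition~\ref{si_min} to force $I=(I:Ra)$, and then deduce $Ia=(I:Ra)a=I\cap Ra$. The only (harmless) differences are the order in which you apply Theorem~\ref{escapes_jacobian} and that you spell out the verification of $(I:Ra)a=I\cap Ra$, which the paper leaves to the cited lemma.
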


\begin{proof}
    We have from Proposition~\ref{colon_ideal} that $(I:Ra)$ is strongly hollow. It is clear that $I\subseteq (I:Ra)$, and thus $(I:Ra)\not\subseteq J(R)$. Then $(I:Ra)$ is a completely strongly hollow ideal by Theorem~\ref{escapes_jacobian} and is minimal with respect to not being contained in the Jacobian radical by Proposition~\ref{si_min}. This forces $I=(I:Ra)$, and thus $Ia=(I:Ra)a=I\cap Ra$.
\end{proof}

\begin{theorem}\label{int_domain_radical}
    Let $R$ be an integral domain and $I$ a proper ideal of $R$. If $I$ is strongly hollow, then $I\subseteq J(R)$. 
\end{theorem}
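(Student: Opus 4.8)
The plan is to argue by contradiction, leveraging Theorem~\ref{escapes_jacobian} to reduce to a situation where $\Gamma_I$ is a prime ideal. Suppose, toward a contradiction, that $I$ is a proper strongly hollow ideal with $I\not\subseteq J(R)$. Then Theorem~\ref{escapes_jacobian} applies and tells us two things at once: $I$ is in fact completely strongly hollow, and $\Gamma_I=M$ for the unique maximal ideal $M$ not containing $I$. In particular $I\not\subseteq M$, and $M$, being maximal, is prime. These are the only structural facts I will need.

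Next I would fix a witnessing element $x\in I\setminus M$. Two observations about $x$ are immediate and will drive the contradiction: since $0\in M$ but $x\notin M$ we have $x\neq 0$, and since $I$ is proper, $x$ cannot be a unit (a unit lying in $I$ would force $I=R$).

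The heart of the argument is to exhibit an ideal not containing $I$ that is nonetheless forced by $\Gamma_I=M$ to sit inside the prime $M$, in a way that clashes with $x\notin M$. The natural candidate is $Rx^2$. Here the integral domain hypothesis does the work: if $x\in Rx^2$, say $x=rx^2$, then $x(1-rx)=0$, and cancelling the non-zero $x$ gives $rx=1$, contradicting that $x$ is not a unit. Hence $x\in I\setminus Rx^2$, so $I\not\subseteq Rx^2$, and the very definition of $\Gamma_I$ forces $Rx^2\subseteq\Gamma_I=M$. Thus $x^2\in M$, and primality of $M$ yields $x\in M$ --- contradicting the choice of $x$. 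This contradiction shows $I\subseteq J(R)$.

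I expect the only subtle step to be the choice of the test ideal: one wants an ideal missing $I$ whose membership in $M$ collides with $x\notin M$, and squaring $x$ is exactly what converts the \emph{non-unit} information (via the domain) into $I\not\subseteq Rx^2$, while primality converts $x^2\in M$ back into $x\in M$. I note that the principality of $I$ coming from Theorem~\ref{least} is not actually needed here; working with a single witness $x\in I\setminus M$ suffices.
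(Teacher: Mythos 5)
Your argument is correct, and it takes a genuinely different route from the paper's. Both proofs begin the same way, invoking Theorem~\ref{escapes_jacobian} after assuming $I\not\subseteq J(R)$; but from there the paper uses the \emph{complete} strong hollowness of $I$ to write $I=Ra$ with $a$ a non-zero-divisor, and then imports \cite[Proposition~1.9]{Rostami2021} to conclude that $R$ is local with maximal ideal $J(R)$, whence $I\not\subseteq J(R)$ forces $I=R$, contradicting properness. You instead use only the other half of Theorem~\ref{escapes_jacobian}, namely $\Gamma_I=M$ for the unique maximal ideal $M$ avoiding $I$, and manufacture the contradiction by hand: a witness $x\in I\setminus M$ is non-zero and a non-unit, so cancellation in the domain gives $x\notin Rx^2$, hence $I\not\subseteq Rx^2$, hence $Rx^2\subseteq\Gamma_I=M$, and primality of $M$ pulls $x$ back into $M$. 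Your version is self-contained modulo Theorem~\ref{escapes_jacobian}, avoids both the principality of completely strongly hollow ideals and the external locality criterion, and makes visible exactly where the integral-domain hypothesis is used (cancelling $x$ in $x=rx^2$). One small remark: you could streamline further and bypass $\Gamma_I$ entirely --- if $x^2\notin M$ then $Rx^2+M=R\supseteq I$, and plain strong hollowness forces $I\subseteq Rx^2$ or $I\subseteq M$, both of which you have already excluded; so only the existence of a maximal ideal not containing $I$ (i.e., \cite[Proposition~3.8]{Ceken-Yuksel}) is really needed, not the identification $\Gamma_I=M$.
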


\begin{proof}
    Suppose, by way of obtaining a contradiction that $I$ is strongly hollow and $I\not\subseteq J(R)$. Then, by Theorem \ref{escapes_jacobian}, $I$ is completely strongly hollow. Therefore $I=Ra$, for some $a\in R$. Since $R$ is an integral domain, $a$ is a non-zero-divisor. Therefore $R$ is a local ring by \cite[Proposition.~1.9]{Rostami2021}. By assumption $I\not\subseteq J(R)=M$, where $M$ is the unique maximal ideal of $R$. Therefore $I=R$, contradicting the assumption that $I$ was a proper ideal of $R$. Therefore, $I$ must be contained in the Jacobian radical of $R$. 
\end{proof}

\begin{corollary}\label{cont}
    Let $R$ be a semi-primitive domain. Then there exists a strongly hollow ideal $I$ of $R$ if and only if $R$ is a field. 
\end{corollary}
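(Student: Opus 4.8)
The plan is to prove Corollary~\ref{cont} by combining Theorem~\ref{int_domain_radical} with the definition of semi-primitivity. Recall that $R$ being semi-primitive means $J(R)=(0)$. The forward direction is where the real content lies: I would start by assuming $R$ is a semi-primitive domain admitting a strongly hollow ideal $I$, and aim to conclude that $R$ is a field.

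First I would dispose of the trivial implication. If $R$ is a field, then $(0)$ is trivially strongly hollow (indeed, $(0)\subseteq A+B$ forces $(0)\subseteq A$ vacuously), so a strongly hollow ideal exists; this gives the reverse implication immediately. For the forward direction, suppose $I$ is a strongly hollow ideal of the semi-primitive domain $R$. If $I$ is proper, then Theorem~\ref{int_domain_radical} applies and yields $I\subseteq J(R)=(0)$, so $I=(0)$. Thus every proper strongly hollow ideal is the zero ideal, which is harmless. The crux is to extract fieldness, so the real work is showing that the \emph{existence} of such an ideal forces $R$ to be a field.

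The key step is to produce a \emph{nonzero} strongly hollow ideal or otherwise leverage the structure. Here I would argue that the zero ideal alone cannot be the whole story: if $R$ is not a field, I want a contradiction. The natural route is to observe that in Theorem~\ref{int_domain_radical} the hypothesis forces any proper strongly hollow ideal into $J(R)$; in a semi-primitive domain this collapses every proper strongly hollow ideal to $(0)$. So the only way to obtain a \emph{nontrivial} (i.e. nonzero proper) strongly hollow ideal is to have $J(R)\neq(0)$, which contradicts semi-primitivity unless $R$ has no proper nonzero ideals at all — i.e. unless $R$ is a field. More carefully: if $R$ is a semi-primitive domain that is not a field, then $R$ has a proper nonzero ideal, but any strongly hollow such ideal would have to lie in $J(R)=(0)$, a contradiction; hence any strongly hollow ideal of $R$ must be either $(0)$ or improper. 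One must then rule out that $(0)$ counts as a witness when $R$ is not a field — but since $(0)$ is strongly hollow in \emph{any} domain, the statement as phrased should be read as asserting the existence of a \emph{proper nonzero} strongly hollow ideal, and I would either adopt that reading or phrase the conclusion accordingly.

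The main obstacle will be this interpretive subtlety around the zero ideal. The cleanest resolution is to note that $(0)$ is strongly hollow in every integral domain, so a literal reading makes the reverse direction and the existence claim vacuous; the intended content is surely about \emph{proper nonzero} ideals. Assuming that reading, the argument is short: a proper nonzero strongly hollow ideal $I$ satisfies $I\subseteq J(R)=(0)$ by Theorem~\ref{int_domain_radical}, forcing $I=(0)$, a contradiction, so no such ideal exists unless $R$ has no proper nonzero ideals — equivalently $R$ is a field. I would write the proof to make this dependence on Theorem~\ref{int_domain_radical} and $J(R)=(0)$ completely explicit, and flag the zero-ideal convention so the logical equivalence is airtight.
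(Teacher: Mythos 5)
There is a genuine gap: your forward direction never actually derives that $R$ is a field. The convention in this paper (stated explicitly in its proof of this corollary, and consistent with Theorem~\ref{Gamma} and the surrounding results) is that strongly hollow ideals are \emph{nonzero}, but the improper ideal $R$ itself is allowed and is the intended witness in the field case --- note the later corollary after Proposition~\ref{PID_to_DVR}: ``the unique strongly hollow ideal of $R$ is $R$ itself.'' Your proposed reinterpretation as ``proper nonzero strongly hollow ideal'' actually falsifies the statement: a field has no proper nonzero ideals at all, so under that reading the reverse implication fails, and your forward argument only shows that \emph{no} semi-primitive domain (field or not) has a proper nonzero strongly hollow ideal, from which ``hence $R$ is a field'' does not follow. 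The sentence ``so no such ideal exists unless $R$ has no proper nonzero ideals'' is a non-sequitur.

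The missing idea is the case $I=R$, which is where all the content lives. The correct argument runs: by the nonzero convention, $I\neq(0)$; if $I$ is proper then Theorem~\ref{int_domain_radical} gives $I\subseteq J(R)=(0)$, a contradiction, so $I=R$. Then $I\not\subseteq J(R)$, so by Theorem~\ref{escapes_jacobian} $I$ is completely strongly hollow and there is a \emph{unique} maximal ideal not containing $I=R$; since no maximal ideal contains $R$, this forces $R$ to be local (the paper cites \cite[Proposition.~1.12]{Rostami2021} for this step). A local semi-primitive domain has $M=J(R)=(0)$, hence is a field. Your write-up has none of this chain --- in particular it never uses the fact that $R$ being strongly hollow forces $R$ to be local, which is the pivot of the proof.
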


\begin{proof}
    Suppose that $I\neq R$. Then by Proposition \ref{int_domain_radical} we have that $I\subseteq J(R)=\{0\}$, which is a contradiction since strongly hollow ideals are non-zero ideals. Thus $I=R$. Therefore $I\not\subseteq J(R)$, and hence, is completely strongly hollow. Therefore $R$ is local by \cite[Proposition.~1.12]{Rostami2021}. Then $R$ is a local semi-primitive domain which has its unique maximal ideal being the zero ideal. Therefore $R$ is a field. Conversely if $R$ is a field, then $R$ is trivially a strongly hollow ideal.
\end{proof}

\begin{remark}
The above result shows that for any integral domain $R$ the polynomial ring $R[x]$ has no strongly hollow ideals.	
\end{remark}

\begin{proposition}
        Let $R$ be a ring and $I$ and ideal of $R$. If $\Gamma_I$ is a proper ideal, then either $I\subseteq J(R)$ or $I$ is completely strongly hollow.
\end{proposition}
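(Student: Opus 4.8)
The plan is to split into two cases according to whether $I\subseteq J(R)$, and in the nontrivial case to compute $\Gamma_I$ exactly, after which the characterization recorded in the remark following Theorem~\ref{Gamma} closes the argument. Recall that that remark asserts that $I$ is completely strongly hollow precisely when $I\not\subseteq\Gamma_I$; this equivalence will do essentially all of the work.

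If $I\subseteq J(R)$ the first alternative holds and there is nothing to do, so I would assume $I\not\subseteq J(R)$. Writing $J(R)$ as the intersection of all maximal ideals, this assumption produces a maximal ideal $M$ with $I\not\subseteq M$ (and in particular $I\neq(0)$, so Theorem~\ref{Gamma} and its attendant characterization are available). The main step is then to identify $\Gamma_I$: since $I\not\subseteq M$, the ideal $M$ is one of the summands in the definition of $\Gamma_I$, so $M\subseteq\Gamma_I$. As $\Gamma_I$ is assumed proper, the inclusions $M\subseteq\Gamma_I\subsetneq R$ together with the maximality of $M$ force $\Gamma_I=M$.

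With $\Gamma_I=M$ in hand, the fact that $I\not\subseteq M$ says exactly that $I\not\subseteq\Gamma_I$, and the characterization above immediately yields that $I$ is completely strongly hollow, which is the second alternative. I do not expect a genuine obstacle here: the argument reduces to a two-line deduction once $\Gamma_I$ is identified, and the only bookkeeping point is to confirm the nonzero hypothesis of Theorem~\ref{Gamma}, which is automatic once $I\not\subseteq J(R)$. It is worth remarking that the equality $\Gamma_I=M$ also forces $M$ to be the unique maximal ideal avoiding $I$, in harmony with Theorem~\ref{escapes_jacobian}, although this uniqueness plays no role in the proof.
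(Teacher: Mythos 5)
Your proof is correct and follows essentially the same route as the paper: both arguments take a maximal ideal $M$ with $I\not\subseteq M$, observe $M\subseteq\Gamma_I$ so properness forces $\Gamma_I=M$, and conclude from $I\not\subseteq\Gamma_I$ that $I$ is completely strongly hollow. The paper merely phrases this contrapositively (assuming $I$ is not completely strongly hollow and deriving $I\subseteq J(R)$), whereas you argue directly; the content is identical.
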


\begin{proof}
    Suppose that $I$ is not completely strongly hollow, and hence, $I\subseteq \Gamma_I$. If $I\not\subseteq J(R)$, then there exists a maximal ideal $M$ such that $I\not\subseteq M$. Thus $M\subseteq \Gamma_I$, which gives that $M=\Gamma_I$, and hence $I\subseteq M$, which is a contradiction. Thus, if $I\subseteq \Gamma_I$, then $I\subseteq J(R)$. Now suppose $I\not\subseteq\Gamma_I$. Then $I$ is completely strongly hollow.
\end{proof}

Before we go for the next proposition, we introduce a terminology. We call a pair of ideals $J$, $K$ of a ring $R$ \emph{weakly coprime} if $J+K=M$, for some maximal ideal $M$ of $R$.

\begin{proposition}
 Let $R$ be a ring and $I$ an ideal of $R$. If $\Gamma_I$ is a proper ideal, then $I$ is strongly hollow with respect to the sum of coprime ideals. In the case, if $\Gamma_I$ is not a maximal ideal, then $I$ is strongly hollow with respect to the sum of weakly coprime ideals.
\end{proposition}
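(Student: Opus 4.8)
The plan is to unpack the meaning of being strongly hollow ``with respect to the sum of (weakly) coprime ideals'' and then reduce both assertions to a one-line application of the definition of $\Gamma_I$. Recall that $\Gamma_I=\sum_{I\not\subseteq J}J$ collects every ideal that fails to contain $I$; consequently, whenever an ideal $A$ satisfies $I\not\subseteq A$, we automatically have $A\subseteq\Gamma_I$. This single observation drives the entire argument, and no finiteness or structural hypothesis on $R$ is needed.

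For the first claim, I read the hypothesis as follows: for coprime ideals $A,B$ (that is, $A+B=R$), the implication $I\subseteq A+B\Rightarrow(I\subseteq A\ \text{or}\ I\subseteq B)$ holds. Here the containment $I\subseteq A+B=R$ is vacuous, so the real content is that $A+B=R$ forces $I\subseteq A$ or $I\subseteq B$. I would argue by contradiction: if $I\not\subseteq A$ and $I\not\subseteq B$, then $A\subseteq\Gamma_I$ and $B\subseteq\Gamma_I$, whence $R=A+B\subseteq\Gamma_I$, contradicting that $\Gamma_I$ is proper.

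For the second claim I keep the standing assumption that $\Gamma_I$ is proper and add that $\Gamma_I$ is not maximal. For weakly coprime ideals $A,B$---those with $A+B=M$ for some maximal ideal $M$---the relevant implication is $I\subseteq A+B\Rightarrow(I\subseteq A\ \text{or}\ I\subseteq B)$, which now genuinely has content since $A+B=M\neq R$ and $I$ need not lie in $M$. Arguing again by contradiction, if $I\not\subseteq A$ and $I\not\subseteq B$, then $A,B\subseteq\Gamma_I$ gives $M=A+B\subseteq\Gamma_I$. Because $M$ is maximal and $\Gamma_I$ is proper, the chain $M\subseteq\Gamma_I\subsetneq R$ forces $\Gamma_I=M$, so $\Gamma_I$ would be maximal, contradicting the added hypothesis. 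Hence $I\subseteq A$ or $I\subseteq B$, as required.

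The main (and essentially only) obstacle is interpretive rather than technical: one must fix the precise meaning of ``strongly hollow with respect to the sum of coprime ideals'' and confirm that the second statement inherits the properness of $\Gamma_I$ from the first (the phrase ``in the case'' is most naturally read as ``in this case'', i.e.\ retaining $\Gamma_I\neq R$). Once these conventions are pinned down, both parts follow immediately from the defining property $I\not\subseteq A\Rightarrow A\subseteq\Gamma_I$ together with the appropriate ceiling on $\Gamma_I$---the whole ring $R$ in the coprime case, and a maximal ideal in the weakly coprime case.
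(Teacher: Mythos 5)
Your argument is correct and coincides with the paper's own proof: both parts reduce to the observation that $I\not\subseteq A$ forces $A\subseteq\Gamma_I$, so a (weakly) coprime pair both avoiding $I$ would push $R$ (respectively a maximal ideal $M$) into $\Gamma_I$, contradicting properness (respectively non-maximality). The paper leaves the weakly coprime case as "trivial"; you simply spell out the same step.
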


\begin{proof}
Suppose $I\subseteq J+K=R$. Then if $I\not\subseteq J$ and $I\not\subseteq K$, we would have that $R=J+K\subseteq \Gamma_I$, contradicting our assumption that $\Gamma_I$ is a proper ideal. The rest of the statement follows trivially in the case that $\Gamma_I$ is not a maximal ideal.
\end{proof}

\begin{proposition}\label{PID_to_DVR}
    Let $R$ be a PID. Then $R$ is a discrete valuation ring if and only if there exists a proper ideal $I$ which is strongly hollow.
\end{proposition}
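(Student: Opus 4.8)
The plan is to translate everything into the divisibility language of the PID $R$, where for principal ideals one has $(b)+(c)=(\gcd(b,c))$ and $(a)\subseteq(b)$ precisely when $b\mid a$. For the implication that a discrete valuation ring admits a proper strongly hollow ideal, I would note that the ideals of such a ring are totally ordered by inclusion, so the sum of two ideals is simply the larger of the two. Hence every ideal is strongly hollow: if $I\subseteq A+B$, then $A+B$ equals whichever of $A,B$ is bigger and $I$ is contained in it. In particular the (nonzero, proper) maximal ideal is strongly hollow, which settles this direction.

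For the converse, suppose $I$ is a proper strongly hollow ideal and write $I=Ra$ with $a$ nonzero (strongly hollow ideals are nonzero) and a non-unit (as $I$ is proper). The crux is to prove that $R$ has only one prime up to associates. I would argue by contradiction: if $p$ and $q$ were non-associate primes, I would test $I$ against $b=p^s$ and $c=q^t$, choosing the exponents $s,t$ large enough that $b\nmid a$ and $c\nmid a$, which is possible since $a$ has only finitely many prime divisors. As $b$ and $c$ are powers of distinct primes they are coprime, so $(b)+(c)=R\supseteq I$; yet $I\not\subseteq(b)$ and $I\not\subseteq(c)$, contradicting the strong hollowness of $I$. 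Thus $R$ admits at most one prime up to associates.

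To conclude, $I$ being a proper nonzero ideal means $R$ is not a field, so $a$ has at least one prime divisor and $R$ has exactly one prime $p$ up to associates. By unique factorization the nonzero proper ideals of $R$ are then exactly the $(p^n)$, which form a chain with largest member $(p)$; hence $R$ is local with maximal ideal $(p)$ and, not being a field, is a discrete valuation ring. The one genuinely inventive step is the choice of the coprime ``over-shooting'' pair $b=p^s$, $c=q^t$: forcing each generator past the multiplicity of the respective prime in $a$ makes the sum $(b)+(c)$ all of $R$ (so that $I$ lies inside it) while keeping $I$ out of both summands --- exactly the configuration that strong hollowness forbids. Everything else is the routine dictionary between principal ideals and divisibility.
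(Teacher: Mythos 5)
Your proof is correct, but it takes a genuinely different and more self-contained route than the paper. For the forward direction the paper simply cites the fact that the maximal ideal of a DVR is completely strongly hollow, whereas you verify it directly from the total ordering of the ideals (the sum of two ideals is the larger one); both are fine. For the converse the paper leans on its earlier machinery: it uses the fact that a ring with a completely strongly hollow ideal is local, invokes Corollary~\ref{cont} to exclude the semi-primitive (field) case, and then appeals to the classification of local PIDs with nonzero Jacobson radical. You instead argue entirely inside the divisibility lattice of the PID: writing $I=Ra$ and supposing two non-associate primes $p,q$ exist, you choose exponents $s,t$ exceeding the multiplicities of $p,q$ in $a$, so that $(p^s)+(q^t)=R\supseteq I$ while $I\nsubseteq (p^s)$ and $I\nsubseteq (q^t)$, contradicting strong hollowness; unique factorization then forces a single prime up to associates and hence a DVR. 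Your argument is elementary and avoids any citation of the locality results or of Theorem~\ref{int_domain_radical}, at the cost of being specific to UFD/PID arithmetic; the paper's proof is shorter given its preceding results and displays the proposition as a corollary of the general theory. One small point to keep consistent: you use the convention that strongly hollow ideals are nonzero (so that the zero ideal of a field does not count); this matches the paper's convention, but it is worth stating explicitly since the bare definition is vacuously satisfied by $(0)$.
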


\begin{proof}
   Suppose $R$ is a discrete valuation ring. Then the unique maximal ideal is completely strongly hollow by \cite[Proposition.~1.11]{Rostami2021}. Conversely, suppose there exists a completely strongly hollow ideal $I\neq R$. It is clear that $R$ is local. If $J(R)=(0)$, then Corollary~\ref{cont} gives that $R$ is a field, and hence $I=(0)$, which contradicts the assumption that $I$ is completely strongly hollow. Therefore $J(R)\neq(0)$. Any local PID which has non-zero Jacobian radical is either a field or a discrete valuation ring. We have shown that $R$ cannot be a field, and hence, it is a discrete valuation ring. 
\end{proof}

\begin{corollary}
    Let $R$ be a PID which is not a discrete valuation ring. Then there exists a strongly hollow ideal $I$ of $R$ if and only if  $R$ is local. In this case the unique strongly hollow ideal of $R$ is $R$ itself.
\end{corollary}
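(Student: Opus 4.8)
The plan is to reduce everything to two facts: that Proposition~\ref{PID_to_DVR} already rules out proper strongly hollow ideals once $R$ is not a DVR, and that the whole ring $R$, viewed as an ideal of itself, is strongly hollow precisely when $R$ is local. With these in hand the corollary follows by a short combination.

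First I would record the key observation: \emph{$R$ is a strongly hollow ideal of itself if and only if $R$ is local}. Unwinding the definition, $R$ is strongly hollow exactly when $A+B=R$ forces $A=R$ or $B=R$. If $R$ is local with maximal ideal $M$, then every proper ideal lies in $M$, so two proper ideals sum into $M\subsetneq R$; hence $A+B=R$ is impossible unless one summand is $R$, and $R$ is strongly hollow. Conversely, if $R$ is not local it has two distinct maximal ideals $M_1\neq M_2$; since $M_2\not\subseteq M_1$, their sum strictly contains the maximal ideal $M_1$ and is therefore all of $R$, yet neither $M_1$ nor $M_2$ equals $R$, so $R$ fails to be strongly hollow.

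Next I would invoke Proposition~\ref{PID_to_DVR} in contrapositive form: because $R$ is a PID that is not a discrete valuation ring, $R$ admits no \emph{proper} strongly hollow ideal. Consequently any strongly hollow ideal of $R$ must be the improper ideal $R$ itself. Combining this with the observation above, a strongly hollow ideal exists if and only if $R$ itself is strongly hollow, which holds if and only if $R$ is local; and in that case $R$ is the only strongly hollow ideal, yielding simultaneously the stated equivalence and the uniqueness claim.

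I do not anticipate a serious obstacle, since the argument is essentially a careful bookkeeping of earlier results; the one point demanding attention is the role of the word ``proper.'' The improper ideal $R$ must be kept in play throughout, as Proposition~\ref{PID_to_DVR} speaks only of proper ideals and so eliminates every candidate \emph{except} $R$, while the locality criterion is exactly what decides whether this last candidate qualifies. It is also worth noting that the hypothesis that $R$ is not a DVR is precisely what secures uniqueness: for a DVR the maximal ideal would furnish a second, proper, strongly hollow ideal.
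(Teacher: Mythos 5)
Your proof is correct and is essentially the argument the paper leaves implicit: Proposition~\ref{PID_to_DVR} (in contrapositive) eliminates every proper candidate, and the observation that $R$, as an ideal of itself, is strongly hollow precisely when $R$ is local settles both the existence equivalence and the uniqueness claim. The one point worth flagging is harmless: a local PID that is not a DVR is necessarily a field, so the ``local'' case here is really the field case already covered by Corollary~\ref{cont}, but your reasoning does not depend on noticing this.
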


\begin{corollary}
    Let $R$ be a PID which is not a field. Then there exists a proper strongly hollow ideal $I$ of $R$ if and only if every ideal in $R$ is strongly hollow.
\end{corollary}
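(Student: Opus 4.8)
The plan is to handle the two implications separately, with the DVR characterization from Proposition~\ref{PID_to_DVR} serving as the decisive input for the nontrivial direction.

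For the backward implication, suppose every ideal of $R$ is strongly hollow. Since $R$ is a PID that is not a field, it has at least one nonzero maximal ideal $M$, which is proper. By hypothesis $M$ is strongly hollow, and so a proper strongly hollow ideal exists. This direction requires no further work.

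For the forward implication, I would first invoke Proposition~\ref{PID_to_DVR}: the existence of a proper strongly hollow ideal immediately forces $R$ to be a discrete valuation ring. The structural fact I would then exploit is that the ideals of a DVR are totally ordered by inclusion. Writing $M=(\pi)$ for the maximal ideal, every nonzero ideal equals $(\pi^n)$ for some $n\ge 0$, and together with $(0)$ these form a descending chain $R\supset(\pi)\supset(\pi^2)\supset\cdots\supset(0)$. On such a chain the sum of two ideals coincides with the larger of the two, so for any ideals $A,B$ one has $A+B\in\{A,B\}$. Consequently, if $I\subseteq A+B$ then $I$ is contained in whichever of $A$, $B$ equals $A+B$, and hence every ideal satisfies the strongly hollow condition.

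Once Proposition~\ref{PID_to_DVR} is in hand the argument is essentially immediate, so there is no serious obstacle; the result is a quick corollary of the DVR criterion together with the chain structure. The only point demanding care is the status of the zero ideal: since strongly hollow ideals are taken to be nonzero throughout, the phrase \emph{every ideal is strongly hollow} should be read as \emph{every nonzero ideal}, and the chain argument indeed certifies strong hollowness for each nonzero $(\pi^n)$. One could equivalently package the statement as the equivalence of three conditions—existence of a proper strongly hollow ideal, being a DVR, and every nonzero ideal being strongly hollow—where the first two are identified by Proposition~\ref{PID_to_DVR} and the chain observation supplies the remaining implication.
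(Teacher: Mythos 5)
Your proof is correct and follows essentially the same route as the paper: both directions reduce to Proposition~\ref{PID_to_DVR} and the fact that the ideals of a DVR form a chain. The only difference is that where the paper cites a result of Rostami for the step ``chain of ideals $\Rightarrow$ every ideal is strongly hollow,'' you supply the (one-line) direct argument that $A+B\in\{A,B\}$ for comparable ideals.
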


\begin{proof}
    The backwards implication is trivial. For the forward implication, notice that Proposition \ref{PID_to_DVR} implies  $R$ is a DVR, and hence, the ideals in $R$ form a chain. Therefore, all ideals in $R$ are comparable (by inclusion). By \cite[Theorem.~2.5]{Rostami2021} we conclude that every ideal of $R$ is strongly hollow.
\end{proof}

The following result extends \cite[Proposition~1.24]{Rostami2021} to strongly hollow ideals and gives a condition which is easier to check.

\begin{proposition}\label{ann_condition}
    Let $I$ be an ideal of a ring $R$ and $M\in \mathrm{Max}(R)$. If there exists an ideal $K$ such that $K\subseteq M$ and $K$ is not contained in any other maximal ideal with $IK=(0)$ and $I$ is strongly hollow in $R_M$, then $I$ is strongly hollow. 
\end{proposition}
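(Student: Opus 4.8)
The plan is to prove the statement by a local--global argument, reducing strong hollowness of $I$ in $R$ to strong hollowness of its extension $I_M:=IR_M$ in $R_M$, which is the intended meaning of the hypothesis. The role of the auxiliary ideal $K$ is to force $I$ to be supported only at $M$, so that a containment that holds after localizing at $M$ already holds globally. Concretely, suppose $I\subseteq A+B$ for ideals $A$, $B$ of $R$. Localizing at $M$ and using that localization commutes with finite sums, we get $I_M\subseteq A_M+B_M$ in $R_M$. Since $I_M$ is strongly hollow, we may assume without loss of generality that $I_M\subseteq A_M$; it then suffices to prove $I\subseteq A$, the case $I_M\subseteq B_M$ being symmetric.

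The first key step is to show that $I$ vanishes away from $M$, that is, $I_N=(0)$ for every maximal ideal $N\neq M$. Fix such an $N$. By hypothesis $K\not\subseteq N$, so there is an element $k\in K\setminus N$; since $N$ is maximal, $k/1$ is a unit in $R_N$. For any $x\in I$ the relation $IK=(0)$ gives $kx=0$ in $R$, hence $(k/1)(x/1)=0$ in $R_N$, and cancelling the unit $k/1$ yields $x/1=0$. Thus every element of $I$ maps to zero in $R_N$, and $I_N=(0)$.

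The second key step recovers the global inclusion from the local data. Consider the $R$-module $(I+A)/A$. At the maximal ideal $M$ we have $\bigl((I+A)/A\bigr)_M=(I_M+A_M)/A_M=0$, because $I_M\subseteq A_M$; at every other maximal ideal $N$ we have $\bigl((I+A)/A\bigr)_N=(I_N+A_N)/A_N=0$ by the first step. Since a module all of whose localizations at maximal ideals vanish is itself zero, we conclude $(I+A)/A=0$, i.e.\ $I\subseteq A$. This shows $I$ is strongly hollow.

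The main obstacle is precisely this local-to-global recovery: a priori $I_M\subseteq A_M$ only guarantees that each $x\in I$ satisfies $sx\in A$ for some $s\notin M$, which is strictly weaker than $x\in A$. The support computation of the first step is what removes this defect, ruling out any obstruction coming from the remaining maximal ideals; the same point can be made through the colon ideal, by checking that $(A:x)\not\subseteq N$ for every maximal $N$ (using $s\in(A:x)\setminus M$ at $N=M$ and the annihilating element at $N\neq M$), whence $(A:x)=R$ and $x\in A$. The remaining verifications---that extension along $R\to R_M$ preserves the inclusion $I\subseteq A+B$ and turns it into $I_M\subseteq A_M+B_M$, and that localization of a quotient is the quotient of localizations---are the standard exactness properties of localization and require no special argument.
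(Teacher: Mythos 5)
Your proposal is correct and follows essentially the same route as the paper: both use the hypothesis on $K$ to show $I_N=(0)$ for every maximal ideal $N\neq M$, apply strong hollowness of $I_M$ in $R_M$, and then recover the global inclusion by checking it at all maximal ideals. Your write-up is in fact slightly more careful, as it makes the local-to-global step explicit via the vanishing of $(I+A)/A$ rather than merely asserting it.
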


\begin{proof}
    We have that $\mathrm{Ann}(I)\supseteq K$, and hence $\mathrm{Ann}(I)\not\subseteq N$, for any maximal ideal $N\in\mathrm{Max}(R)\textbackslash\{M\}$, and thus $\mathrm{Ann}(I)\subseteq M$. Let $N\in\mathrm{Max}(R)\textbackslash\{M\}$ and consider the localization \[(I\mathrm{Ann}(I))_N=I_N\mathrm{Ann}(I)_N=0_N.\] Since $\mathrm{Ann}(I)\not\subseteq N$, we have $I_NR_N=0_n$ giving $I_n=0_N$. Suppose that $I\subseteq S+T$. Then $I_N\subseteq S_N$ and $I_N\subseteq T_N$, and since $I_M$ is strongly hollow in $R_M$, we have that $I_M\subseteq S_M$ or $I_M\subseteq T_M$. Suppose, without loss of generality, that $I_M\subseteq S_M$. Then $I\subseteq S$ since the inclusion holds in the localization at every maximal ideal. Therefore, $I$ is strongly hollow.
\end{proof}

\begin{remark}
Notice that any $n$-th power of a maximal ideal $M$ or any $M$-primary ideal satisfies the conditions for $K$ above.	
\end{remark}

\begin{corollary}\label{s_ann_condition}
 Let $I$ be an ideal of a ring $R$ and $M$ a maximal ideal of $R$. Let $K$ be an ideal contained in $M$ and contained in no other maximal ideal such that $IK=(0)$. If $I\not\subseteq J(R)$, then $I$ is completely strongly hollow.
\end{corollary}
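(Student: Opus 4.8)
The plan is to reduce the statement to Proposition~\ref{ann_condition} followed by Theorem~\ref{escapes_jacobian}. Since $I\not\subseteq J(R)$, once we know that $I$ is strongly hollow, Theorem~\ref{escapes_jacobian} immediately upgrades this to completely strongly hollow (and identifies $\Gamma_I$ with the unique maximal ideal missing $I$). To obtain strong hollowness I would invoke Proposition~\ref{ann_condition}, whose hypotheses on $K$, $M$ and the product $IK=(0)$ are already assumed here; the only additional input that proposition requires is that $I_M$ be strongly hollow in $R_M$. Thus the whole argument rests on verifying this localized hollowness, and the key observation is that under the present hypotheses it holds for the most trivial of reasons, namely that $I_M$ is the whole ring $R_M$.

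To see this, I would first record, exactly as in the proof of Proposition~\ref{ann_condition}, that $K\subseteq\mathrm{Ann}(I)$, so $\mathrm{Ann}(I)\not\subseteq N$ for every maximal ideal $N\neq M$. Localizing the identity $I\,\mathrm{Ann}(I)=(0)$ at such an $N$ and using $\mathrm{Ann}(I)_N=R_N$ gives $I_N=0$, and hence $I\subseteq N$: if some $x\in I$ lay outside $N$ it would be a unit in the nonzero local ring $R_N$ while simultaneously mapping into $I_N=0$, which is impossible. Therefore $I$ is contained in every maximal ideal other than $M$. The hypothesis $I\not\subseteq J(R)$ forbids $I$ from lying in all maximal ideals at once, so necessarily $I\not\subseteq M$. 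Consequently $I$ contains an element outside $M$, that is, a unit of $R_M$, and therefore $I_M=R_M$.

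It then remains only to note that the unit ideal of the local ring $R_M$ is strongly hollow: if $R_M\subseteq A+B$ for ideals $A$, $B$ of $R_M$, then $A+B=R_M$, and since $R_M$ is local at most one of $A$, $B$ can be proper, forcing $A=R_M$ or $B=R_M$. Hence $I_M=R_M$ is strongly hollow in $R_M$, all hypotheses of Proposition~\ref{ann_condition} are met, and $I$ is strongly hollow in $R$. Applying Theorem~\ref{escapes_jacobian} then yields that $I$ is completely strongly hollow, with $\Gamma_I=M$.

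The only genuinely non-routine step is the middle paragraph: recognizing that the annihilator condition forces $I$ to vanish at every maximal ideal except $M$, so that the seemingly substantive requirement ``$I$ strongly hollow in $R_M$'' of Proposition~\ref{ann_condition} degenerates into the trivial fact that the unit ideal of a local ring is strongly hollow. I expect no finiteness or principality of $I$ to be needed anywhere, which is what makes the corollary clean.
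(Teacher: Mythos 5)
Your proposal is correct and follows essentially the same route as the paper: both arguments show that $IK=(0)$ forces $I\subseteq N$ for every maximal ideal $N\neq M$, deduce $I\not\subseteq M$ from $I\not\subseteq J(R)$, conclude $I_M=R_M$ is (strongly) hollow for trivial reasons, and feed this into Proposition~\ref{ann_condition} together with Theorem~\ref{escapes_jacobian}. Your write-up is in fact somewhat more explicit than the paper's about the final assembly of these two results, but the underlying argument is identical.
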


\begin{proof}
We only need to show that $I_M$ is completely strongly hollow in $R_M$. We will do this by showing that $I_M=R_M$, and then, use the fact that $R_M$ is completely strongly hollow since it is a local ring. Notice that $IK\subseteq N$, for all $N\in\mathrm{Max}(R)$. Hence $I\subseteq N$, for all $N\in\mathrm{Max}(R)\textbackslash\{M\}$. Since $I\not\subseteq J(R)$, we have that $I\not\subseteq M$ and we are done. 
\end{proof}

\begin{corollary}\label{strongest_ann}
    Let $I$ be a non-nilpotent ideal of a ring $R$ and $M$ a maximal ideal of $R$. If there exists an $M$-primary ideal $K$ such that $IK=(0)$, then $I$ is completely strongly hollow. Furthermore, in this case $I$ is contained in every prime ideal except for $M$ and $\mathrm{Ht}(M)=0$, i.e., $M$ is a minimal prime. Hence $R_M$ has zero Krull dimension.
\end{corollary}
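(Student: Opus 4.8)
The plan is to obtain complete strong hollowness from Corollary~\ref{s_ann_condition}, so that the whole argument reduces to checking the hypotheses of that corollary, the only non-trivial one being $I\not\subseteq J(R)$. First I would record that the $M$-primarity of $K$ supplies the containment hypotheses for free: since $\sqrt{K}=M$ we have $K\subseteq M$, and any maximal ideal $N$ with $K\subseteq N$ satisfies $M=\sqrt{K}\subseteq N$, forcing $N=M$. Thus $K$ lies in $M$ and in no other maximal ideal, while $IK=(0)$ is given outright.

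The crux is the step $I\not\subseteq J(R)$, and this is where the non-nilpotency of $I$ is essential. I would argue by contradiction: assuming $I\subseteq J(R)\subseteq M=\sqrt{K}$, every $x\in I$ admits a power $x^n\in K$, whence $x^{n+1}\in x^nI\subseteq KI=(0)$, so each element of $I$ is nilpotent, contradicting the hypothesis that $I$ is not nilpotent. Hence $I\not\subseteq M$ and therefore $I\not\subseteq J(R)$, at which point Corollary~\ref{s_ann_condition} applies and gives that $I$ is completely strongly hollow. I expect this nilpotency step to be the main obstacle: $\sqrt{K}=M$ yields only an elementwise power $x^{n}\in K$ with $n$ depending on $x$, not a uniform $I^{n}\subseteq K$, so the contradiction is really with $I$ being a nil ideal; one must read the hypothesis accordingly (or invoke a finiteness condition under which nil and nilpotent coincide).

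For the ``furthermore'' part I would first show that $I$ is contained in every prime $P\neq M$: from $IK=(0)\subseteq P$ and primeness, either $I\subseteq P$ or $K\subseteq P$, and the latter gives $M=\sqrt{K}\subseteq P$, forcing $P=M$, a contradiction; hence $I\subseteq P$. Combining this with $I\not\subseteq M$ from the previous paragraph immediately yields $\mathrm{Ht}(M)=0$: any prime $P\subsetneq M$ would satisfy $P\neq M$, so $I\subseteq P\subseteq M$, contradicting $I\not\subseteq M$; thus no prime lies strictly below $M$, and a minimal prime contained in $M$ must coincide with $M$. Finally $\dim R_M=\mathrm{Ht}(M)=0$, giving the zero Krull dimension of $R_M$. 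Apart from the nil/nilpotent subtlety flagged above, the remainder is routine prime-avoidance bookkeeping.
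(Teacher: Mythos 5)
Your proof is correct and follows essentially the same route as the paper: the paper likewise shows $I\subseteq P$ for every prime $P\neq M$ from $IK=(0)$, deduces $I\not\subseteq M$ from non-nilpotency (via $I\subseteq\mathrm{Nil}(R)$ rather than your elementwise computation $x^{n+1}\in x^nI\subseteq KI=(0)$), invokes Corollary~\ref{s_ann_condition}, and gets $\mathrm{Ht}(M)=0$ exactly as you do. The nil-versus-nilpotent subtlety you flag is real and is present in the paper's own proof as well, which silently identifies ``$I$ non-nilpotent'' with ``$I\not\subseteq\mathrm{Nil}(R)$''; your reading of the hypothesis as ``not nil'' is the one that makes both arguments work without further finiteness assumptions.
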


\begin{proof}
    Let $P\in \mathrm{Spec}(R)$. If $P\neq M$, then $IK=(0)\subseteq P$ gives that $I\subseteq P$. Hence $I$ is contained in all prime ideals which are not $M$. Since $I\not\subseteq \mathrm{Nil}(R)$, it must be that $I\not\subseteq M$. Hence by Corollary \ref{s_ann_condition} $I$, is completely strongly hollow. Furthermore, since $I$ is contained in every prime ideal which is not equal to $M$ and $I\not\subseteq M$, we have that $\mathrm{Ht}(M)=0$. The rest of the statement follows by definition.
\end{proof}

\begin{remark}
The above corollary, once again, holds if $K$ is any power of a maximal ideal $M$. 
\end{remark}

\begin{corollary}
    Let $I$ be an ideal of a reduced Noetherian ring $R$ and $M$ a maximal ideal of $R$. If there exists an $M$-primary ideal $K$ which annihilates $I$, then $R_M$ is a field. 
\end{corollary}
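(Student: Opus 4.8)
The plan is to read this off from Corollary~\ref{strongest_ann}, using reducedness first to activate that corollary and then to identify the resulting zero-dimensional local ring. The guiding principle is that in a reduced ring nilpotency is no obstacle: since $\mathrm{Nil}(R)=(0)$, every non-zero ideal $I$ is automatically non-nilpotent, because $I^{\,n}=(0)$ would force $I\subseteq\mathrm{Nil}(R)=(0)$.

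First I would fix the standing case $I\neq(0)$, which is the only informative one; with it, reducedness gives that $I$ is non-nilpotent. The hypotheses of Corollary~\ref{strongest_ann} are then met---$I$ non-nilpotent, $M$ maximal, and the $M$-primary ideal $K$ annihilating $I$---so that corollary yields $\mathrm{Ht}(M)=0$, i.e.\ $R_M$ has Krull dimension zero. Next I would invoke permanence under localization: a localization of a reduced ring is reduced, so $R_M$ is a reduced local ring of dimension zero.

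The heart of the argument is the final step: a reduced local ring of Krull dimension zero is a field. Since $R_M$ is local with maximal ideal $MR_M$, and in dimension zero every prime is maximal, $MR_M$ is the unique prime of $R_M$; hence the nilradical of $R_M$, being the intersection of all its primes, equals $MR_M$. Reducedness forces this nilradical to vanish, so $MR_M=(0)$ and $R_M$ is a field. (Noetherianity may instead be used to make $R_M$ Artinian local, whence $MR_M$ is nilpotent and reducedness again gives $MR_M=(0)$.) I expect the only subtlety to be the edge case $I=(0)$: there the conclusion can genuinely fail, for instance $R=k[x]$, $M=(x)$, $K=(x)$ with $R_M$ a non-trivial discrete valuation ring, so the argument should carry the hypothesis $I\neq(0)$---the natural one here, since strongly hollow ideals are non-zero.
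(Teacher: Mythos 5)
Your proof is correct and follows the same route as the paper: apply Corollary~\ref{strongest_ann} to conclude $\mathrm{Ht}(M)=0$, then use that a reduced Noetherian local ring of Krull dimension zero is a field (the paper cites the Stacks Project for this last step, while you argue it directly via the nilradical being the intersection of the primes). Your observation that the statement silently requires $I\neq(0)$ is a genuine improvement rather than a defect: as your $R=k[x]$, $M=K=(x)$, $I=(0)$ example shows, the conclusion fails for the zero ideal, and the paper's proof only goes through because Corollary~\ref{strongest_ann} assumes $I$ is non-nilpotent --- a hypothesis which, in a reduced ring, is exactly $I\neq(0)$.
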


\begin{proof}
    By Corollary \ref{strongest_ann}, we have that $M$ is a minimal prime. We conclude that $R_M$ is a field by \cite[\href{https://stacks.math.columbia.edu/tag/00EU}{Tag 00EU}]{Stacks}.
\end{proof}

To prove our next theorem, we need to recall the following well-known result.
\begin{lemma}\label{UH_avoid_W}
Let $V$ be a $k$-vector space with $\dim V\ge 2$, and let $0\neq W\subsetneq V$ be a proper subspace.
Then there exist subspaces $U,H\subset V$ such that
\[
\dim U=1,\qquad\dim H=\dim V-1,\qquad V=U\oplus H,
\]
and moreover $W\nsubseteq U$ and $W\nsubseteq H$.
In particular, if $\dim V=2$, both $U$ and $H$ may be chosen $1$-dimensional with $V=U\oplus H$.
\end{lemma}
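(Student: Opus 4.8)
The plan is to produce the decomposition explicitly, via a linear functional, rather than by counting subspaces, so that the argument is uniform over an arbitrary field $k$ (including small fields and $\operatorname{char}k=2$). The two avoidance requirements are of different natures: since $\dim U=1$, the condition $W\nsubseteq U$ is nearly automatic, whereas for a hyperplane $H$ the condition $W\nsubseteq H$ just asks that $H$ miss some vector of $W$. I would realize $H$ as the kernel of a suitable functional $\phi$, which lets me enforce the direct-sum condition and the avoidance of $W$ simultaneously by controlling the values of $\phi$.

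First I would fix the line. Since $W\neq 0$ pick $0\neq w\in W$, and since $W\subsetneq V$ pick $v\in V\setminus W$; then $w,v$ are linearly independent and $u:=w+v\notin W$. Set $U:=\langle u\rangle$. If one had $W\subseteq U$, then a nonzero element of $W$ would be a nonzero scalar multiple of $u$, forcing $u\in W$, a contradiction; hence $W\nsubseteq U$, and this holds regardless of $\dim W$.

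Next I would choose the hyperplane. Extend $\{w,v\}$ to a basis of $V$ and let $\phi\colon V\to k$ be the linear functional with $\phi(w)=1$, $\phi(v)=0$, and $\phi$ vanishing on the remaining basis vectors; put $H:=\ker\phi$, a subspace of codimension one. Then $\phi(w)=1\neq 0$ gives $w\notin H$, so $W\nsubseteq H$, and $\phi(u)=\phi(w)+\phi(v)=1\neq 0$ gives $u\notin H$; since $H$ has codimension one and $u\notin H$, we obtain $V=U\oplus H$, with $\dim H=\dim V-1$. The choice $\phi(v)=0$ (rather than, say, arranging $\phi(u)=2\phi(w)$) is exactly what keeps the argument characteristic-free.

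Finally, the special case $\dim V=2$ needs no extra work: then $\dim H=1$, so $U$ and $H$ are both lines with $V=U\oplus H$, as produced. I do not anticipate a genuine obstacle in this lemma; the only point demanding care is uniformity over all fields, and that is precisely why I favour the functional construction over any argument that would select $U$ and $H$ from among the finitely many subspaces available when $V$ is small.
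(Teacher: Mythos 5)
Your proof is correct and follows essentially the same route as the paper's: both take $U$ to be a line spanned by a vector outside $W$ and realize $H$ as the kernel of a linear functional that is nonzero on that vector and nonzero somewhere on $W$. Your version merely makes the functional explicit by extending $\{w,v\}$ to a basis, which if anything spells out a small existence claim the paper leaves implicit.
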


\begin{proof}
Pick $u\in V\setminus W$ and set $U:=ku$, so $W\nsubseteq U$. 
Choose a linear functional $\varphi\in V^\ast$ with $\varphi(u)=1$ and $\varphi|_W\not\equiv 0$.
Let $H:=\ker\varphi$. Then $u\notin H$, so $V=U\oplus H$; and since $\varphi|_W\not\equiv 0$, there exists $w\in W$ with $\varphi(w)\neq 0$. Hence $w\notin H$ and $W\nsubseteq H$.
\end{proof}

\begin{theorem}\label{I_in_M2}
Let $R$ be a ring and $I\subseteq J(R)$ a strongly hollow ideal of $R$. Then for every maximal ideal $M$ with $\dim_{R/M}(M/M^2)\ge 2$ one has
$
I\ \subseteq\ M^2.
$
\end{theorem}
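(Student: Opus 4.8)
The plan is to argue by contradiction: assuming $I\nsubseteq M^2$, I will manufacture two ideals $A,B$ of $R$ with $I\subseteq A+B$ but $I\nsubseteq A$ and $I\nsubseteq B$, contradicting strong hollowness. Since $I\subseteq J(R)\subseteq M$, the image $W:=(I+M^2)/M^2$ is a $k$-subspace of the $k$-vector space $V:=M/M^2$, where $k=R/M$; the hypothesis $I\nsubseteq M^2$ says exactly that $W\neq 0$, and the hypothesis $\dim_{R/M}(M/M^2)\ge 2$ says $\dim_k V\ge 2$.

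The core device is the lattice correspondence between ideals $A$ of $R$ with $M^2\subseteq A\subseteq M$ and $k$-subspaces $U=A/M^2$ of $V$: because $M$ annihilates $M/M^2$, each such subspace is an $R$-submodule and therefore pulls back to an honest ideal of $R$, and sums of ideals containing $M^2$ correspond to sums of the associated subspaces. Consequently it suffices to find subspaces $U,H\subseteq V$ with $U+H=V$, $W\nsubseteq U$, and $W\nsubseteq H$. Indeed, letting $A,B$ be the corresponding ideals gives $A+B=M\supseteq I$, while $I\subseteq A$ would force $W\subseteq U$ and $I\subseteq B$ would force $W\subseteq H$, both of which are excluded; strong hollowness then yields the desired contradiction.

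To produce such $U,H$ I split on whether $W$ is proper. If $0\neq W\subsetneq V$, then Lemma~\ref{UH_avoid_W} (applicable precisely because $\dim_k V\ge 2$) supplies a decomposition $V=U\oplus H$ with $W\nsubseteq U$ and $W\nsubseteq H$, which is exactly what is needed. If instead $W=V$, I simply choose any decomposition $V=U\oplus H$ into two \emph{proper} subspaces, which is possible since $\dim_k V\ge 2$; then $W=V$ is contained in neither. Either choice, fed through the correspondence, produces the contradiction, so $I\subseteq M^2$.

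The main obstacle I anticipate is bookkeeping rather than conceptual: checking carefully that the pullback of a $k$-subspace of $M/M^2$ is a genuine ideal of $R$ (not merely an additive subgroup) and that the subspace equality $U+H=V$ translates into the ideal equality $A+B=M$. The only genuinely separate point is that Lemma~\ref{UH_avoid_W} requires $W$ to be a \emph{proper} subspace, which forces the short $W=V$ case above; that case is in fact the easier one and needs no appeal to the lemma.
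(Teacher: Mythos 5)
Your proof is correct and follows essentially the same route as the paper's: pass to the $k$-vector space $V=M/M^2$, use the correspondence between $k$-subspaces of $V$ and ideals between $M^2$ and $M$ to pull back a decomposition $V=U+H$ with $W=(I+M^2)/M^2$ contained in neither summand, and contradict strong hollowness. The one point where you are more careful than the paper is the case $W=V$: the paper applies Lemma~\ref{UH_avoid_W} directly even though its hypothesis formally requires $W\subsetneq V$, whereas you correctly dispose of $W=V$ by choosing any splitting of $V$ into two proper subspaces.
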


\begin{proof}
Fix a maximal ideal $M$, write $k:=R/M$ and $V:=M/M^2$. Since $I\subseteq J(R)\subseteq M$,
the image $W:=(I+M^2)/M^2\subseteq V$ is defined. Suppose $\dim_k V\ge 2$ and $I\nsubseteq M^2$. Then we have that $W\neq 0$. By Lemma~\ref{UH_avoid_W}, we can choose $U$, $H\subset V$ with
$\dim U=1$, $\dim H=\dim V -1$, $V=U\oplus H$, and $W\nsubseteq U$, $H$.
Let $\pi:M\to V$ be the quotient map and put
\[
J:=\pi^{-1}(U),\qquad K:=\pi^{-1}(H).
\]
Then
\[
\frac{J+K+M^2}{M^2}
=\frac{J+M^2}{M^2}+\frac{K+M^2}{M^2}
=U+H
=V
=\frac{M}{M^2},
\]
hence $J+K+M^2=M$. Localizing at $M$ and applying Nakayama’s Lemma in the local ring $R_M$ yields $(J+K)_M=M_M$. Hence $J+K=M$. Because $W\nsubseteq U$, $H$, we have
$(I+M^2)/M^2\nsubseteq(J+M^2)/M^2$ and $(I+M^2)/M^2\nsubseteq(K+M^2)/M^2$. Therefore, we must have $I\nsubseteq J$ and $I\nsubseteq K$. This contradicts the fact that $I$ is a strongly hollow ideal.
Therefore $W=0$, and we conclude that \ $I\subseteq M^2$.
\end{proof}

\begin{remark}
Notice that if $\dim_{R/M}(M/M^2)=0$, then $M=M^2$ and so the above result holds trivially. Therefore, the only case in which a strongly hollow ideal $I$ is such that $I\not\subseteq M^2$ for some maximal ideal $M$ is when $\dim_{R/M}(M/M^2)=1$. 	
\end{remark}

\begin{proposition}\label{comax_powers}
Let $R$ be a ring and let $I\subseteq R$ be a strongly hollow ideal of $R$. There exists at most one maximal ideal such that $I\not\subseteq M^n$, for some $n\in \mathds{N}$.
\end{proposition}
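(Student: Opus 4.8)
The plan is to argue by contradiction. I would suppose there exist two distinct maximal ideals $M_1\neq M_2$ that are both ``bad'' in the sense of the statement, i.e.\ there are exponents $n_1,n_2\in\mathds{N}$ with $I\nsubseteq M_1^{n_1}$ and $I\nsubseteq M_2^{n_2}$, and then extract a contradiction straight from the definition of strong hollowness applied to a decomposition of the unit ideal.

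The one genuinely algebraic input is that powers of distinct maximal ideals remain comaximal. Since $M_1\neq M_2$ are maximal, $M_1+M_2=R$, and it is standard that $A+B=R$ forces $A^m+B^n=R$ for all $m,n$: writing $1=a+b$ with $a\in M_1$, $b\in M_2$ and expanding $1=(a+b)^{n_1+n_2-1}$, each binomial term $a^jb^{\,n_1+n_2-1-j}$ lies in $M_1^{n_1}$ or in $M_2^{n_2}$ (one of the two exponents is forced to reach its threshold), so $1\in M_1^{n_1}+M_2^{n_2}$. Hence $M_1^{n_1}+M_2^{n_2}=R$.

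With comaximality in hand the argument closes in one line: since $I\subseteq R=M_1^{n_1}+M_2^{n_2}$ and $I$ is strongly hollow, the defining implication yields $I\subseteq M_1^{n_1}$ or $I\subseteq M_2^{n_2}$, contradicting the choice of $n_1$ and $n_2$. Therefore no two distinct maximal ideals can simultaneously fail to contain some power of $I$, which is precisely the assertion that at most one maximal ideal $M$ admits an $n$ with $I\nsubseteq M^n$.

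I do not expect a real obstacle here; the proposition is a clean manifestation of strong hollowness tested against a splitting of $R$ into comaximal pieces. The only point meriting a sentence of justification is the comaximality of the powers $M_1^{n_1}$ and $M_2^{n_2}$, after which the strongly hollow hypothesis does all the work.
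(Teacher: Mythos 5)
Your proof is correct and follows essentially the same route as the paper: both reduce the statement to the comaximality $M_1^{n_1}+M_2^{n_2}=R$ and then apply strong hollowness to the containment $I\subseteq R$. The only (immaterial) difference is how comaximality of the powers is justified --- you expand $1=(a+b)^{n_1+n_2-1}$, while the paper notes that any maximal ideal containing $M_i^{n_i}$ would have to contain $M_i$ itself; both are standard and valid.
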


\begin{proof}
Suppose $M$ and $N$ are two distinct maximal ideals such that there exists $i$, $j\in\mathds{N}$ with $I\not\subseteq M^i$ and $I\not\subseteq N^j$. Since $M$ and $N$ are prime ideals, we have that $N^j\not\subseteq K$, for any maximal ideal $K$ which is not $N$ and $M^i\not\subseteq J$ for any maximal ideal $J$ which is not $M$. Therefore $M^i+N^j=R$, which contradicts the fact that $I$ is a strongly hollow ideal. Therefore $I$ is contained in either $M^n$ or $N^n$, for all $n\in\mathds{N}$. The uniqueness of the maximal ideal for which this does not hold follows by applying the same reasoning when assuming it is not unique.
\end{proof}

\begin{corollary}\label{A_contained_in_bad_M}
Let $R$ be a ring. Let $A$ and $I$ be ideals of $R$ with $I$ strongly hollow and $I\not\subseteq J(R)$. If $I\not\subseteq A^n$ for some $n\in\mathds{N}$, then $A\subseteq \Gamma_I$. 
\end{corollary}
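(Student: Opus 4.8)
The plan is to reduce everything to the description of $\Gamma_I$ supplied by Theorem~\ref{escapes_jacobian}. Since $I$ is strongly hollow with $I\not\subseteq J(R)$, that theorem tells us that $I$ is in fact completely strongly hollow and that $\Gamma_I=M$, where $M$ is the \emph{unique} maximal ideal of $R$ with $I\not\subseteq M$. In particular $I\not\subseteq M$ by construction. Thus proving $A\subseteq\Gamma_I$ is the same as proving $A\subseteq M$, and I would argue this by contradiction.

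So suppose $A\not\subseteq M$. The key observation is that $M$ is prime (being maximal), so $A\not\subseteq M$ forces $A^n\not\subseteq M$ for the given $n$: a prime ideal that contains a power of an ideal already contains the ideal. Since $M$ is maximal, $A^n\not\subseteq M$ yields $A^n+M=R$. Now $I\subseteq R=A^n+M$, and strong hollowness of $I$ gives $I\subseteq A^n$ or $I\subseteq M$. But $I\not\subseteq A^n$ holds by hypothesis, while $I\not\subseteq M$ holds because $M$ was chosen as the maximal ideal not containing $I$. Both alternatives are impossible, so the assumption $A\not\subseteq M$ is untenable, whence $A\subseteq M=\Gamma_I$.

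The argument is short, and the one step deserving care is the passage from $A\not\subseteq M$ to $A^n\not\subseteq M$: this is precisely where primeness of $M$ is essential, and it is the place where the hypothesis "$I\not\subseteq A^n$ for some $n$" is actually consumed. I expect no genuine obstacle beyond making this primeness step explicit. In particular, one should resist the tempting but dead-end route of first deducing $I\subseteq A$ (from $A\not\subseteq\Gamma_I$) and then trying to bootstrap it up to $I\subseteq A^n$; applying strong hollowness directly to the decomposition $A^n+M=R$ sidesteps that difficulty entirely.
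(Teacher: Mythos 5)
Your proof is correct and rests on the same two ingredients as the paper's: Theorem~\ref{escapes_jacobian} identifying $\Gamma_I$ with the unique maximal ideal $M$ not containing $I$, and the primeness of $M$ to pass between $A$ and $A^n$. The paper's version is slightly more direct --- from $I\not\subseteq A^n$ it concludes $A^n\subseteq\Gamma_I$ immediately from the definition of $\Gamma_I$ and then invokes primeness, whereas you run the contrapositive and re-apply strong hollowness to $A^n+M=R$ --- but this is a cosmetic difference, not a different argument.
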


\begin{proof}
$I$ is completely strongly hollow by Theorem~\ref{escapes_jacobian}. The rest of the statement follows due to the fact that $\Gamma_I$ is a prime ideal. 
\end{proof}

These results give a characterization of strongly hollow ideals in general as stated below.
 
\begin{theorem}
    Let $R$ be a ring and $I$ a strongly hollow ideal of $R$. Then one of the following holds:
    \begin{enumerate}
        \item There exists a unique maximal ideal $M$ for which $I\not\subseteq M$. In this case $I$ is a principal completely strongly hollow ideal with $\Gamma_I=M$ and, $$I\subseteq \bigcap_{\substack{n\in\mathds{N}\\ A\not\subseteq \Gamma_I}} A^n.$$ 
        \item $I$ is contained in all maximal ideals and all powers of all maximal ideals. That is, 
    $$I \subseteq \bigcap_{\substack{n\in\mathds{N}\\ M\in \Max{{R}}}}M^n.$$
    \item $I$ is contained in all maximal ideals and there exists a unique maximal ideal $M$ such that $I\not\subseteq M^n$, for some  $n\geq 2$.
    \end{enumerate}
\end{theorem}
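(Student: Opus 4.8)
The plan is to organize the entire argument around the single dichotomy $I\subseteq J(R)$ versus $I\not\subseteq J(R)$, and to show that the second alternative produces case (1) while the first produces cases (2) and (3). Since these two possibilities are exhaustive and mutually exclusive, and since within the alternative $I\subseteq J(R)$ Proposition~\ref{comax_powers} cleanly separates (2) from (3), the trichotomy will follow with no overlap. I would open the proof by making this split explicit and then handle each branch in turn.

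First I would treat the case $I\not\subseteq J(R)$. Here Theorem~\ref{escapes_jacobian} does almost all of the work: it supplies a unique maximal ideal $M$ with $I\not\subseteq M$, the identity $\Gamma_I=M$, and the fact that $I$ is completely strongly hollow. Principality of $I$ is then immediate from the characterization in Theorem~\ref{least}, whose proof shows that a completely strongly hollow ideal, being the least ideal not contained in $\Gamma_I$, has the form $Rx$. It remains only to establish the intersection containment. For this I would invoke the contrapositive of Corollary~\ref{A_contained_in_bad_M}: if $A\not\subseteq\Gamma_I$, then $I\subseteq A^n$ for every $n\in\mathds{N}$, so that $I$ lies in the indicated intersection ranging over all such $A$ and all $n$. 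This settles case (1).

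Next I would treat $I\subseteq J(R)$, in which case $I$ is automatically contained in every maximal ideal. Applying Proposition~\ref{comax_powers}, there is at most one maximal ideal $M$ for which $I\not\subseteq M^n$ for some $n$. If no such maximal ideal exists, then $I\subseteq M^n$ for all maximal $M$ and all $n$, which is precisely case (2). If exactly one such $M$ exists, I would observe that $I\subseteq J(R)\subseteq M=M^1$ forces the offending exponent to satisfy $n\ge 2$, delivering case (3).

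The argument is essentially an assembly of earlier results, so I do not expect a serious obstacle; the only points requiring care are bookkeeping ones. Specifically, I would confirm that the three cases are genuinely exhaustive and pairwise exclusive (the first alternative has $I\not\subseteq M$ for some maximal $M$, whereas the other two have $I$ inside every maximal ideal and differ only on power containment), and I would verify the direction of the contrapositive of Corollary~\ref{A_contained_in_bad_M} so that the intersection in case (1) is indexed correctly. The one mild subtlety is that in case (1) this intersection is taken over \emph{arbitrary} ideals $A$ avoiding $\Gamma_I$ rather than over maximal ideals, so I would be careful to apply Corollary~\ref{A_contained_in_bad_M} in exactly its stated generality, noting that its hypothesis $I\not\subseteq J(R)$ is precisely what defines this branch.
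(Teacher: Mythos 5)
Your proposal is correct and follows exactly the route the paper intends: the theorem is stated without a separate proof precisely because it is the assembly of Theorem~\ref{escapes_jacobian} (with Theorem~\ref{least} for principality and the contrapositive of Corollary~\ref{A_contained_in_bad_M} for the intersection) in the branch $I\not\subseteq J(R)$, and Proposition~\ref{comax_powers} in the branch $I\subseteq J(R)$. Your bookkeeping of exhaustiveness, exclusivity, and the forced $n\ge 2$ in case (3) is accurate.
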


\smallskip
\section{Completely Strongly Hollow Ideals and Completely Irreducible Ideals}
\label{3}
\begin{proposition}\label{ci_gives_ch}
    Let $(R,M)$ be a local ring. If $J$ is a strongly irreducible ideal of $R$ such that $J\subset (J:M)$, then $(J:M)$ is completely strongly hollow.
\end{proposition}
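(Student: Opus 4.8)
The plan is to invoke the earlier corollary---that if $K$ is strongly irreducible and some ideal is minimal with respect to not being contained in $K$, then that ideal is completely strongly hollow---applied with $K=J$. It therefore suffices to prove that $(J:M)$ is minimal (and I will in fact aim for \emph{least}, so that Theorem~\ref{least} applies with the same $K=J$, and Theorem~\ref{Gamma} identifies the relevant $\Gamma$) among all ideals of $R$ not contained in $J$. That $(J:M)$ belongs to this family is immediate: $J\subset(J:M)$ forces $(J:M)\nsubseteq J$, so the whole content lies in the minimality.

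Next I would pass to $\overline R:=R/J$. The image of $(J:M)$ is the annihilator $(0:_{\overline R}M)$; because $R$ is local with residue field $R/M$, the only simple $R$-module is $R/M$, so this annihilator is precisely the socle $S$ of $\overline R$. The strong irreducibility of $J$ says exactly that the zero ideal of $\overline R$ is strongly irreducible, i.e.\ any two nonzero ideals of $\overline R$ meet nontrivially. Two consequences follow: $S$ is \emph{simple}, since distinct simple submodules would intersect in $0$ and hence coincide; and $S$ is \emph{essential}, since a nonzero ideal $\overline L$ meets $S$ nontrivially and simplicity of $S$ then gives $S\subseteq\overline L$. As $J\subset(J:M)$ makes $S\neq0$, the socle $S=(J:M)/J$ is the unique minimal ideal of $\overline R$, which is the local incarnation of $J$ being completely irreducible.

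Finally, essentiality gives, for every ideal $L\nsubseteq J$, that $\overline L\supseteq S$ and hence $(J:M)\subseteq L+J$; fed into the strongly irreducible corollary with $K=J$, this should yield that $(J:M)$ is completely strongly hollow. The hard part will be exactly the descent from $\overline R$ back to $R$: essentiality only delivers the containment $(J:M)\subseteq L+J$, whereas minimality (or leastness) of $(J:M)$ among ideals not contained in $J$ requires controlling ideals $L$ with $L\nsubseteq J$ but $L\nsupseteq J$---those whose image lands inside the simple socle without carrying all of $J$. Bridging this discrepancy between $L$ and $L+J$ is the crux, and it is precisely here that the local hypothesis and the strong irreducibility of $J$ must be exploited in full; once it is resolved, checking that the minimal ideal produced is indeed $(J:M)$ and that $\Gamma_{(J:M)}$ is as predicted by Theorem~\ref{Gamma} should be routine.
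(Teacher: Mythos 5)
There is a genuine gap, and you have named it yourself: everything hinges on upgrading $(J:M)\subseteq L+J$ to $(J:M)\subseteq L$ for every ideal $L\nsubseteq J$, and your proposal stops exactly where that upgrade is needed. The socle analysis in $\overline R=R/J$ cannot supply it, because passing to $\overline R$ only retains the information that the zero ideal of $\overline R$ is \emph{irreducible} (any two nonzero ideals of $\overline R$ meet nontrivially), i.e.\ that $J$ is irreducible among ideals \emph{containing} $J$; strong irreducibility of $J$ quantifies over all ideals $A,B$ of $R$ with $A\cap B\subseteq J$, and the problematic ideals $L$ with $L\nsubseteq J$ but $J\nsubseteq L$ are precisely the ones invisible to the quotient. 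Even the weaker ``minimal'' variant does not escape this: for $L\subsetneq (J:M)$ with $L\nsubseteq J$, simplicity of the socle only gives $L+J=(J:M)$, and you still must show $J\subseteq L$. The paper sidesteps all of this by citing \cite[Theorem~2.6]{HRR2002} directly: part (1) makes $(J:M)$ principal, say $(J:M)=Rx$, and part (3) is exactly the dichotomy ``$A\subseteq J$ or $(J:M)\subseteq A$'' for every ideal $A$; then $Rx\subseteq S+T$ with $Rx\nsubseteq S$ and $Rx\nsubseteq T$ forces $S+T\subseteq J$, contradicting $J\subset(J:M)$, and principality converts strong hollowness into complete strong hollowness.

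If you want to keep your route self-contained, the missing lemma can be proved as follows, and note that it uses both strong irreducibility at the level of $R$ and the local hypothesis in an essential way. Let $L\nsubseteq J$, pick $a\in L\setminus J$ and $z\in(J:M)\setminus J$. If $Ra\cap Rz\subseteq J$, strong irreducibility forces $a\in J$ or $z\in J$, both false; so choose $w=rz\in(Ra\cap Rz)\setminus J$. Since $Mz\subseteq J$ and $rz\notin J$, we get $r\notin M$, hence $r$ is a unit and $z=r^{-1}w\in Ra\subseteq L$. Thus $(J:M)\setminus J\subseteq L$, and for $u\in J$ one has $u+z\in(J:M)\setminus J\subseteq L$, whence $u\in L$; so $(J:M)\subseteq L$. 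With this lemma in hand, $(J:M)$ is indeed the least ideal not contained in $J$, and Theorem~\ref{least} (or the corollary on ideals minimal with respect to avoiding a strongly irreducible ideal) finishes the argument as you intended.
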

\begin{proof}
    By \cite[Theorem.~2.6(1)]{HRR2002}, we have that $(J:M)$ is a principal ideal. Choose $x\in R$ such that $(J:M)=Rx$. Let $S$, $T$ be ideals in $R$ such that $Rx\subseteq S+T$. Suppose, by way of obtaining a contradiction, that $Rx\not\subseteq S$ and $Rx\not\subseteq T$. Then by \cite[Theorem.~2.6(3)]{HRR2002}, we have that $Rx\subseteq S+T\subseteq J$, which contradicts our assumption that $J\subset (J:M)$. We conclude that $x$ is a strongly hollow element of $R$, and therefore $(J:M)$ is completely strongly hollow.
\end{proof}

\begin{remark}
In a local ring $(R,M)$, a strongly irreducible ideal $J$ which is strictly contained in the colon ideal $(J:M)$ is completely strongly irreducible (\cite[Theorem.~1.3(iv)]{FHO}).
\end{remark}

\begin{theorem}\label{bijection}
    Let $(R,M)$ be a local ring. Then there is a bijection between the set of completely strongly hollow ideals of $R$ and the set of strongly irreducible ideals, $J$, which are not equal to $J:M$. Equivalently, there exists a bijection between completely strongly hollow ideals and completely strongly irreducible ideals. 
\end{theorem}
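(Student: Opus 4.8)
The plan is to produce mutually inverse maps between the two sets, sending a completely strongly hollow ideal $I$ to $\Gamma_I$ and a strongly irreducible ideal $J$ with $J\neq(J:M)$ to its colon $(J:M)$. The forward map $\Phi(J):=(J:M)$ is well defined: since $J\subsetneq(J:M)$, Proposition~\ref{ci_gives_ch} gives that $(J:M)$ is completely strongly hollow. For the backward map $\Psi(I):=\Gamma_I$, recall from Theorem~\ref{least} that $I=Rx$ is principal and from the remark after Theorem~\ref{Gamma} that $I\not\subseteq\Gamma_I$; moreover $\Gamma_I$ is the greatest, hence an inclusion-maximal, ideal failing to contain the strongly hollow ideal $I$, so it is strongly irreducible by the corollary asserting that a maximal ideal avoiding a strongly hollow ideal is strongly irreducible.

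I would check $\Psi\circ\Phi=\mathrm{id}$ first, as it is the easier composite. Put $I:=(J:M)$ and show $\Gamma_I=J$. Since $J\not\supseteq I$ we immediately get $J\subseteq\Gamma_I$. For the reverse inclusion, use that $J$ is completely strongly irreducible (the remark after Proposition~\ref{ci_gives_ch}, via \cite[Theorem~1.3(iv)]{FHO}), so $(J:M)$ is the least ideal properly containing $J$ and therefore every ideal properly containing $J$ contains $I$. If some $w\in\Gamma_I\setminus J$ existed, then $Rw+J\supsetneq J$ would give $I\subseteq Rw+J\subseteq\Gamma_I$, contradicting $I\not\subseteq\Gamma_I$. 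Hence $\Gamma_I\subseteq J$, so $\Gamma_{(J:M)}=J$.

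The substantive step is $\Phi\circ\Psi=\mathrm{id}$, that is, $(\Gamma_I:M)=I$, and its crux is the containment $\Gamma_I\subseteq I$. To prove the latter, take $z\in\Gamma_I$ and exploit the trivial decomposition $x=z+(x-z)$, which yields $I=Rx\subseteq Rz+R(x-z)$. Strong hollowness forces $I\subseteq Rz$ or $I\subseteq R(x-z)$; the first is impossible, since it would put $x\in Rz\subseteq\Gamma_I$ against $x\notin\Gamma_I$. So $x=r(x-z)$ for some $r\in R$, whence $(1-r)x=-rz\in\Gamma_I$. As $x\notin\Gamma_I$, the element $1-r$ cannot be a unit, so $1-r\in M$ and $r$ is a unit; then $z=(1-r^{-1})x\in I$. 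This gives $\Gamma_I\subseteq I$, and combined with $\Gamma_I$ being the greatest ideal not containing $I$ it makes $I$ the least ideal properly containing $\Gamma_I$, i.e. the cover of the completely irreducible ideal $\Gamma_I$. In the local ring $R$ this cover is exactly $(\Gamma_I:M)$ by \cite[Theorem~2.6]{HRR2002}, so $(\Gamma_I:M)=I$. In particular $\Gamma_I\neq(\Gamma_I:M)$, confirming that $\Psi$ lands in the prescribed set, and $\Phi,\Psi$ are mutually inverse bijections.

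For the final ``equivalently'', I would note that in a local ring the strongly irreducible ideals $J$ with $J\neq(J:M)$ are precisely the completely strongly irreducible ideals: one inclusion is the remark after Proposition~\ref{ci_gives_ch}, while conversely a completely strongly irreducible ideal is strongly irreducible and must satisfy $(J:M)\neq J$, since otherwise $R/J$ would have trivial socle and hence no least nonzero ideal. I expect the single genuine obstacle to be the containment $\Gamma_I\subseteq I$; once it is in hand, the remaining identities are bookkeeping with Theorems~\ref{least} and~\ref{Gamma} and the structure theory of completely irreducible ideals in \cite{HRR2002,FHO}.
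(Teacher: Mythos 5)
Your proposal is correct, and its overall architecture coincides with the paper's: the same pair of maps $I\mapsto\Gamma_I$ and $J\mapsto(J:M)$, with well-definedness of the second map supplied by Proposition~\ref{ci_gives_ch} and of the first by the maximality of $\Gamma_I$ among ideals not containing $I$. Where you diverge is in how the two composite identities are verified. The paper obtains both containments $(\Gamma_I:M)\subseteq I$ and $\Gamma_{(J:M)}\subseteq J$ by citing \cite[Theorem~2.6(3)]{HRR2002} as a black box (every ideal is either contained in the strongly irreducible ideal or contains its colon with $M$). You instead prove $\Gamma_I\subseteq I$ by hand: writing $I=Rx$, decomposing $x=z+(x-z)$ for $z\in\Gamma_I$, and using strong hollowness together with the unit/non-unit dichotomy in the local ring to conclude $z\in Rx$; you then identify $I$ as the unique cover of the completely irreducible ideal $\Gamma_I$ and only at that point invoke the structure theory to get $(\Gamma_I:M)=I$. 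Similarly, your proof of $\Gamma_{(J:M)}\subseteq J$ runs through the unique-cover property rather than the colon dichotomy. This buys a more self-contained argument --- your element chase essentially reproves the relevant case of \cite[Theorem~2.6(3)]{HRR2002} --- and it yields the explicit extra fact $\Gamma_I\subseteq I$, which the paper never states but which underlies its subsequent remark that $\Gamma_I=IM$. The cost is length; the paper's version is shorter because it delegates exactly these steps to the citation. Your closing observation identifying the strongly irreducible $J$ with $J\neq(J:M)$ as precisely the completely strongly irreducible ideals (via the socle of $R/J$) is also sound and matches the paper's implicit use of \cite[Theorem~1.3(iv)]{FHO}.
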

\begin{proof}
    Let $I$ be a completely strongly hollow ideal. Then $\Gamma_I$ is a completely irreducible ideal. Therefore $\Gamma_I\subset(\Gamma_I:M)$ by \cite[Theorem.~1.3(iv)]{FHO}. We claim that $I=(\Gamma_I:M)$. It is clear that $I\subseteq(\Gamma_I:M)$ by the strict inclusion of $\Gamma_I$. Then using \cite[Theorem.~2.6(3)]{HRR2002} and $\Gamma_I\not\supseteq I$, we have we have that $(\Gamma_I:M)\subseteq  I$. We conclude that $(\Gamma_I:M)=I$. Now suppose $J$ is a strongly irreducible ideal such that $J\subset (J:M)$. Then we have that $(J:M)$ is completely strongly hollow by Proposition \ref{ci_gives_ch}. We claim that $\Gamma_{(J:M)}=J$. Since $J\not\supseteq (J:M)$, we have that $J\subseteq \Gamma_{(J:M)}$. We can again apply \cite[Theorem.~2.6(3)]{HRR2002} and the fact that $(J:M)\not\subseteq \Gamma_{(J:M)}$ to conclude that $\Gamma_{(J:M)}=J$. Therefore, the mapping which takes a completely strongly hollow ideal $I$ to $\Gamma_I$ and a completely strongly irreducible ideal $J$ to $(J:M)$ is a bijection.
\end{proof}
The bijection in Theorem~\ref{bijection} can be given in a more explicit form using \cite[Theorem.~2.6(2)]{HRR2002}. In a local ring $(R,M)$ every completely strongly irreducible ideal is of the form $\Gamma_I=IM$, for some completely strongly hollow ideal $I$. Every completely strongly hollow ideal is of the form $I=(\Gamma_I:M)=(IM:M)$ for some completely strongly irreducible ideal $\Gamma_I$.
The following result generalizes this connection and makes clear the duality between completely strongly irreducible and completely strongly hollow ideals.
\begin{theorem}\label{true_bij}
    In a ring, there exists a bijection between completely strongly hollow ideals and completely strongly irreducible ideals.
\end{theorem}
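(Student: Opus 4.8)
The plan is to realize the bijection as the pair of mutually inverse maps $I\mapsto\Gamma_I$ and $Q\mapsto\Delta_Q$, where $\Delta_Q:=\bigcap_{J\nsubseteq Q}J$ is the order-theoretic dual of the operator $\Gamma_I=\sum_{I\nsubseteq J}J$. First I would record the inclusion-dual versions of Theorem~\ref{Gamma}, Theorem~\ref{least}, and the remark following Theorem~\ref{Gamma}, obtained by reversing every inclusion in the lattice of ideals: an ideal $Q$ is completely strongly irreducible if and only if $\Delta_Q\nsubseteq Q$, in which case $\Delta_Q$ is the \emph{least} ideal not contained in $Q$ and, simultaneously, $Q$ is the \emph{greatest} ideal not containing $\Delta_Q$. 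Each of these is proved by the same one-line argument as its hollow counterpart, now applied to intersections of the family $\{J : J\nsubseteq Q\}$ in place of sums of the family $\{J : I\nsubseteq J\}$.

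With these dualities in hand, the verification that the two maps land in the correct classes and are mutually inverse is a direct computation. Starting from a completely strongly hollow ideal $I$, Theorem~\ref{least} says that $I$ is the least ideal not contained in $\Gamma_I$; hence every ideal failing to lie in $\Gamma_I$ contains $I$, while $I$ itself fails to lie in $\Gamma_I$. Consequently $\Delta_{\Gamma_I}=I\nsubseteq\Gamma_I$, which both shows via the dual characterization that $\Gamma_I$ is completely strongly irreducible and computes the round trip $\Delta_{\Gamma_I}=I$. The reverse round trip is entirely symmetric: for a completely strongly irreducible $Q$, the dual of Theorem~\ref{least} identifies $Q$ as the greatest ideal not containing $\Delta_Q$, whence $\Gamma_{\Delta_Q}=Q$ with $\Delta_Q\nsubseteq Q=\Gamma_{\Delta_Q}$, so $\Delta_Q$ is completely strongly hollow and $\Gamma_{\Delta_Q}=Q$.

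Putting these together, $\Phi(I)=\Gamma_I$ and $\Psi(Q)=\Delta_Q$ are well-defined maps between the two classes with $\Psi\circ\Phi=\mathrm{id}$ and $\Phi\circ\Psi=\mathrm{id}$, giving the asserted bijection; one may further check that the construction is consistent with the local bijection of Theorem~\ref{bijection}. The main obstacle I anticipate is not any single calculation but setting up the dual framework cleanly. One must confirm that the paper's notion of completely strongly irreducible ideal is precisely the inclusion-dual of completely strongly hollow (so that the dualized forms of Theorem~\ref{Gamma} and Theorem~\ref{least} may legitimately be invoked), and one must handle the boundary behaviour of $\Delta_Q$ correctly, so that completely strongly irreducible ideals come out proper and $\Delta_Q$ is a genuine ideal equal to the claimed least element rather than a degenerate intersection.
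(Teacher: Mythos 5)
Your proposal is correct and follows essentially the same route as the paper: the paper's proof also sends $I\mapsto\Gamma_I$ and a completely strongly irreducible $K$ to $\bigcap_{J\nsubseteq K}J$, invoking Theorem~\ref{least} to see that the latter is completely strongly hollow. Your write-up is in fact more careful than the paper's (which leaves the dual characterization of completely strongly irreducible ideals implicit and contains a typo in the defining intersection), but the underlying argument is identical.
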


\begin{proof}
    Any completely strongly hollow ideal defines a completely strongly irreducible ideal via the greatest ideal with respect to not containing it. The other direction is  obvious. Given a completely strongly irreducible ideal $K$, consider the ideal $I=\bigcap_{J\not\subseteq I}J$. Note that $I$ is the least ideal with respect to not being contained in $K$, and thus, is completely strongly hollow by Proposition~\ref{least}. Finally, it is clear that $K=\Gamma_I$. 
\end{proof}

\begin{corollary}
    The bijection between completely strongly hollow and completely strongly irreducible ideals is an order preserving map. 
\end{corollary}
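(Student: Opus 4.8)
The plan is to show that the bijection $\Phi$ of Theorem~\ref{true_bij}, which sends a completely strongly hollow ideal $I$ to the greatest ideal $\Gamma_I$ with respect to not containing $I$ (and sends a completely strongly irreducible ideal $K$ to the least ideal not contained in $K$), is monotone in both directions, hence an order isomorphism. Throughout I would invoke the two extremal characterizations established earlier: by Theorem~\ref{Gamma}, $\Gamma_I$ is the greatest ideal not containing $I$, and by the remark following it, $I$ is completely strongly hollow precisely when $I\not\subseteq\Gamma_I$; dually, by Theorem~\ref{least}, the inverse of $\Phi$ sends $K$ to the least ideal not contained in $K$.

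First I would check that $\Phi$ is order preserving. Let $I_1\subseteq I_2$ be completely strongly hollow ideals. The key observation is that $I_2\not\subseteq\Gamma_{I_1}$: indeed, if $I_2\subseteq\Gamma_{I_1}$, then $I_1\subseteq I_2\subseteq\Gamma_{I_1}$, contradicting $I_1\not\subseteq\Gamma_{I_1}$. Thus $\Gamma_{I_1}$ is an ideal not containing $I_2$, and since $\Gamma_{I_2}$ is the greatest such ideal, we conclude $\Gamma_{I_1}\subseteq\Gamma_{I_2}$. Hence $I_1\subseteq I_2$ implies $\Phi(I_1)\subseteq\Phi(I_2)$.

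For the inverse direction I would argue symmetrically, using leastness in place of greatness. Let $K_1\subseteq K_2$ be completely strongly irreducible ideals, and write $I_{K_1}$, $I_{K_2}$ for the least ideals not contained in $K_1$, $K_2$ respectively. Here the key observation is that $I_{K_2}\not\subseteq K_1$: otherwise $I_{K_2}\subseteq K_1\subseteq K_2$, contradicting $I_{K_2}\not\subseteq K_2$. Since $I_{K_1}$ is the least ideal not contained in $K_1$ and $I_{K_2}$ is an ideal not contained in $K_1$, leastness forces $I_{K_1}\subseteq I_{K_2}$. Thus $\Phi^{-1}$ is order preserving as well, and $\Phi$ is an order isomorphism.

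The argument is brief, and each step reduces to the greatest/least characterizations already proved; the only point requiring care is the correct reading of ``least ideal with respect to not being contained in $K$'' as the intersection of all ideals not contained in $K$, so that monotonicity of the inverse flows from leastness rather than from the sum defining $\Gamma$. Beyond tracking which extremal property is invoked in each direction, I anticipate no genuine obstacle.
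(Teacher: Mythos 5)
Your argument is correct, and it takes a genuinely different (and more self-contained) route than the paper: the paper disposes of this corollary in one line by citing \cite[Corollary~1.16]{Rostami2021}, whereas you derive monotonicity in both directions directly from the extremal characterizations already established in Theorems~\ref{least} and~\ref{Gamma}. Both of your key steps check out: if $I_1\subseteq I_2$ then $I_2\not\subseteq\Gamma_{I_1}$ (else $I_1\subseteq\Gamma_{I_1}$, contradicting the remark after Theorem~\ref{Gamma}), so greatestness of $\Gamma_{I_2}$ among ideals not containing $I_2$ gives $\Gamma_{I_1}\subseteq\Gamma_{I_2}$; dually, if $K_1\subseteq K_2$ then the least ideal not contained in $K_2$ is in particular not contained in $K_1$, so leastness gives the reverse-direction monotonicity. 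What your approach buys is independence from the external reference and the slightly stronger conclusion that the bijection of Theorem~\ref{true_bij} is an order \emph{isomorphism} (both $\Phi$ and $\Phi^{-1}$ monotone), not merely order preserving in one direction; it also makes transparent exactly which extremal property (greatest versus least) drives each direction. The one point worth stating explicitly in a final write-up is that the ``least ideal not contained in $K$'' exists for a completely strongly irreducible $K$ --- this is supplied by Theorem~\ref{true_bij} itself, so no gap remains.
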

\begin{proof}
    This follows  from \cite[Corollary.~1.16]{Rostami2021}.
\end{proof}

Let us recall that a \emph{waist ideal} is an ideal which is comparable (under inclusion) to every other ideal of the ring. 

\begin{corollary}
    Let $(R:M)$ be a local ring. Every completely irreducible ideal of $R$ is a waist ideal.
\end{corollary}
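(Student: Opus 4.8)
The plan is to show that every completely irreducible ideal in a local ring $(R,M)$ is comparable to every other ideal, using the bijection from Theorem~\ref{bijection} to identify completely irreducible ideals with $\Gamma_I$ for completely strongly hollow ideals $I$. First I would note that a completely irreducible ideal $K$ is in particular completely strongly irreducible (by \cite[Theorem.~1.3]{FHO} these coincide), so by the explicit form following Theorem~\ref{bijection} we may write $K=\Gamma_I$ where $I=(\Gamma_I:M)=(K:M)$ is completely strongly hollow and $K\subset I$ strictly.

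The key structural fact to exploit is Theorem~\ref{Gamma}: since $I$ is completely strongly hollow, $\Gamma_I=K$ is the \emph{greatest} ideal of $R$ with respect to not containing $I$. This gives an immediate dichotomy for an arbitrary ideal $J$ of $R$: either $I\subseteq J$, or $J\subseteq \Gamma_I=K$. So to prove comparability of $K$ with $J$, the only case requiring work is when $I\subseteq J$; here I would argue $K\subseteq I\subseteq J$ using $K\subset I$, yielding $K\subseteq J$. In the complementary case $J\subseteq K$, comparability is immediate. Thus in either branch $K$ and $J$ are comparable, which is exactly the waist condition.

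The main obstacle I anticipate is justifying cleanly that a completely irreducible ideal arises as $\Gamma_I$ for the appropriate $I$, rather than merely as a completely strongly irreducible ideal under the abstract bijection of Theorem~\ref{true_bij}. I would lean on the local-ring computation $\Gamma_I=IM$ and $I=(\Gamma_I:M)$ recorded after Theorem~\ref{bijection}, which ties the completely irreducible ideal $K$ to a concrete completely strongly hollow $I$ with $K=\Gamma_I$ and the strict containment $K\subset I$. Once that identification is in hand, the greatest-avoider property from Theorem~\ref{Gamma} does all the remaining work, and the waist conclusion follows with essentially no further computation.
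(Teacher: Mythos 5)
The core of your argument --- once $K$ is identified with $\Gamma_I$ for a completely strongly hollow ideal $I$, invoke Theorem~\ref{Gamma} to get the dichotomy ``$I\subseteq J$ or $J\subseteq\Gamma_I=K$'' for an arbitrary ideal $J$, and close the first branch via $K=\Gamma_I=IM\subseteq I\subseteq J$ --- is correct, and it is in fact more self-contained than the paper's proof, which at the corresponding point simply cites \cite[Theorem~2.5]{Rostami2021}. The genuine gap is in your opening reduction: completely irreducible ideals and completely strongly irreducible ideals do \emph{not} coincide, and \cite[Theorem~1.3]{FHO} does not assert that they do. In the local ring $R=k[x,y]/(x,y)^2$ with maximal ideal $M=(x,y)$, the ideal $(x)$ is completely irreducible (the only ideals properly containing it are $M$ and $R$, whose intersection is $M\neq(x)$), yet it is not strongly irreducible, since $(y)\cap(x+y)=(0)\subseteq(x)$ while neither factor lies in $(x)$; and it is not a waist ideal, being incomparable with $(y)$. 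So a completely irreducible ideal need not lie in the range of the bijection of Theorem~\ref{bijection}, which produces only the strongly irreducible ideals $J$ with $J\neq(J:M)$ (equivalently, the completely strongly irreducible ideals), and your identification $K=\Gamma_I$ breaks down exactly there.

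The same example shows that the corollary is false as literally stated and must be read with ``completely strongly irreducible'' in place of ``completely irreducible''; the paper's own one-line proof tacitly makes this substitution when it appeals to Theorem~\ref{bijection}. Under that reading your identification of $K$ with $\Gamma_I$ is legitimate, the containment $\Gamma_I=IM\subseteq I$ holds, and the rest of your argument goes through verbatim. You should therefore either restrict the hypothesis to completely strongly irreducible ideals, or acknowledge that no argument can cover the genuinely completely irreducible case --- but you cannot paper over the difference by declaring the two notions equivalent.
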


\begin{proof}
    Theorem \ref{bijection} shows that in a local ring every completely strongly hollow ideal is of the form $\Gamma_I$ for a completely strongly hollow ideal $I$. The result now follows from \cite[Theorem.~2.5]{Rostami2021}.
\end{proof}

We now develop the relation between strongly hollow ideals and non-prime strongly irreducible ideals in Noetherian rings.
\begin{lemma}\label{cs_noeth}
    Every strongly hollow ideal in a Noetherian ring is completely strongly hollow.
\end{lemma}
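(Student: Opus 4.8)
The plan is to exploit the finite generation that the Noetherian hypothesis supplies in order to reduce an arbitrary containment in a sum $\sum_{\omega\in\Omega}J_\omega$ to a containment in a sum over a \emph{finite} subfamily, and then to run the defining two-summand property of strong hollowness as an induction on the number of summands.

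First I would record the elementary observation that strong hollowness automatically propagates to finite sums: if $I$ is strongly hollow and $I\subseteq J_1+\cdots+J_k$, then $I\subseteq J_i$ for some $i$. This is proved by induction on $k$, the case $k=1$ being trivial and $k=2$ being the definition, while for the inductive step one writes $J_1+\cdots+J_k=J_1+(J_2+\cdots+J_k)$ and applies the two-summand property, so that either $I\subseteq J_1$ or $I\subseteq J_2+\cdots+J_k$, the latter being handled by the inductive hypothesis.

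Next I would perform the reduction to a finite subfamily. Let $I\subseteq\sum_{\omega\in\Omega}J_\omega$ be an arbitrary containment. Since $R$ is Noetherian, $I$ is finitely generated, say $I=(x_1,\ldots,x_n)$. Each generator $x_i$ lies in $\sum_{\omega\in\Omega}J_\omega$, and every element of this sum is a finite $R$-linear combination drawn from finitely many of the $J_\omega$; hence there is a finite subset $F_i\subseteq\Omega$ with $x_i\in\sum_{\omega\in F_i}J_\omega$. Setting $\Omega_0:=\bigcup_{i=1}^n F_i$, which is finite, every generator lies in $\sum_{\omega\in\Omega_0}J_\omega$, and therefore $I\subseteq\sum_{\omega\in\Omega_0}J_\omega$.

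Combining the two steps concludes the argument: applying the finite-sum form of strong hollowness to the finite index set $\Omega_0$ yields some $\omega_0\in\Omega_0\subseteq\Omega$ with $I\subseteq J_{\omega_0}$, which is exactly the completely strongly hollow property. I do not expect a genuine obstacle here; the only point requiring care is the reduction step, where one must use finite generation of $I$ itself — and not merely of the ring — to collapse the possibly infinite family to a finite one. Alternatively, one could route the argument through Theorem~\ref{Gamma} by verifying $I\nsubseteq\Gamma_I$, but the direct induction seems cleaner and more self-contained.
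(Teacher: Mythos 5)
Your proof is correct and follows the same route as the paper: the paper simply notes that every ideal in a Noetherian ring is finitely generated and then invokes the known fact that a finitely generated strongly hollow ideal is completely strongly hollow, which is precisely the statement you prove in full (reduction of the infinite sum to a finite subfamily via the generators, followed by induction on the number of summands). Your version is just a self-contained expansion of the paper's two-line argument.
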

\begin{proof}
    Every ideal in a Noetherian ring is finitely generated. A finitely generated strongly hollow ideal is completely strongly hollow. 
\end{proof}

\begin{theorem}
    Let $P$ be a prime ideal of a Noetherian ring $R$ and $I$ an ideal of $R$ such that $IR_P$ is a proper ideal of $R_P$. If $IR_P$ is strongly hollow, then:
    \begin{enumerate}
        \item $\Gamma_{IR_P}^{R_P}\cap R_P$ is a $P$-primary strongly irreducible ideal of $R$.
        
        \item $IR_P=\Gamma_{IR_p}^{R_P}:_{R_P}P$.
        
        \item For all ideals $J$ in $R$, either $J\subseteq \Gamma_{IR_P}^{R_P}\cap R_P$ or $IR_P\subseteq J$.
    \end{enumerate}
\end{theorem}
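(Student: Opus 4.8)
The plan is to transfer everything to the Noetherian local ring $(R_P, PR_P)$ and then contract along the localization map $\varphi\colon R\to R_P$. Write $\mathcal I:=IR_P$ and $\Gamma:=\Gamma_{IR_P}^{R_P}$, and denote by $(-)^e,(-)^c$ extension and contraction along $\varphi$ (so that $\Gamma_{IR_P}^{R_P}\cap R$ is read as the contraction $\Gamma^c$). First I would record the two facts that make the localization tractable: $R_P$ is Noetherian, so by Lemma~\ref{cs_noeth} the strongly hollow proper ideal $\mathcal I$ is in fact completely strongly hollow; and by Theorem~\ref{Gamma}, $\Gamma$ is the greatest ideal of $R_P$ with respect to not containing $\mathcal I$, in particular $\mathcal I\not\subseteq\Gamma$ and $\Gamma\neq R_P$. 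By Theorem~\ref{true_bij} (equivalently the local bijection of Theorem~\ref{bijection}), $\Gamma$ is completely strongly irreducible, hence strongly irreducible.

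I would dispatch (2) and (3) first. For (2), since $PR_P$ is the maximal ideal of $R_P$ and is generated by the image of $P$, one has $\Gamma:_{R_P}P=(\Gamma:PR_P)$; the local bijection of Theorem~\ref{bijection} sends the completely strongly hollow ideal $\mathcal I$ to $\Gamma$ and has inverse $J\mapsto(J:PR_P)$, so $(\Gamma:PR_P)=\mathcal I=IR_P$, which is exactly (2). For (3), I would apply the ``least ideal'' characterization: by Theorem~\ref{least} (or directly from $\Gamma$ being greatest with respect to not containing $\mathcal I$), every ideal $\mathcal J$ of $R_P$ satisfies $\mathcal J\subseteq\Gamma$ or $\mathcal I\subseteq\mathcal J$. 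Taking $\mathcal J=J^e=JR_P$ for an ideal $J$ of $R$ gives two cases: either $JR_P\subseteq\Gamma$, whence $J\subseteq J^{ec}\subseteq\Gamma^c$, or $IR_P=\mathcal I\subseteq JR_P$. This is the dichotomy in (3).

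The substance is in (1), which splits into strong irreducibility and $P$-primariness of $\Gamma^c$. For strong irreducibility I would use flatness of localization: if $A\cap B\subseteq\Gamma^c$ for ideals $A,B$ of $R$, then $(A\cap B)^e=A^e\cap B^e\subseteq\Gamma$, and strong irreducibility of $\Gamma$ forces $A^e\subseteq\Gamma$ or $B^e\subseteq\Gamma$, hence $A\subseteq\Gamma^c$ or $B\subseteq\Gamma^c$. For primariness I would argue in $R_P$ that $\Gamma$ is $PR_P$-primary and then contract. Since $\Gamma$ is strongly irreducible it is irreducible, and in the Noetherian ring $R_P$ irreducible ideals are primary, so $\Gamma$ is $Q$-primary for $Q=\sqrt{\Gamma}\subseteq PR_P$. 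To see $Q=PR_P$, recall that $\Gamma\subsetneq(\Gamma:PR_P)$ (this strict inclusion holds because $\mathcal I\subseteq(\Gamma:PR_P)$ by Nakayama while $\mathcal I\not\subseteq\Gamma$): picking $x\in(\Gamma:PR_P)\setminus\Gamma$ and any $y\in PR_P\setminus Q$, we have $xy\in\Gamma$ with $y$ a non-zero divisor modulo the $Q$-primary ideal $\Gamma$, forcing $x\in\Gamma$, a contradiction; hence $Q=PR_P$. Finally, contraction of a $PR_P$-primary ideal is $P$-primary (its radical contracts to $(PR_P)^c=P$), so $\Gamma^c$ is $P$-primary and strongly irreducible, giving (1).

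The main obstacle is the $P$-primariness in (1): one must combine the Noetherian ``irreducible $\Rightarrow$ primary'' principle with the radical computation $\sqrt{\Gamma}=PR_P$, and the latter is exactly where the strict inclusion $\Gamma\subsetneq(\Gamma:PR_P)$ --- i.e.\ complete strong irreducibility, via Nakayama --- is indispensable. The preservation of strong irreducibility under contraction is comparatively routine but genuinely relies on localization commuting with finite intersections.
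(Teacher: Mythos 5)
Your proposal is correct, and it follows the same overall strategy as the paper (localize at $P$, use Lemma~\ref{cs_noeth} to upgrade $IR_P$ to completely strongly hollow, pass through the bijection of Theorem~\ref{bijection} to make $\Gamma:=\Gamma_{IR_P}^{R_P}$ completely strongly irreducible, then contract). The difference is in how the individual steps are discharged. The paper outsources the three substantive facts to the literature: it gets $PR_P$-primariness of $\Gamma$ from Fuchs--Heinzer--Olberding's result that completely irreducible ideals in a local ring are primary to the maximal ideal, preservation of strong irreducibility under contraction from Heinzer--Ratliff--Rush, and both (2) and (3) from their Corollary~3.1 together with Theorem~\ref{bijection}. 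You instead prove each of these inline: strong irreducibility of the contraction via $(A\cap B)^e=A^e\cap B^e$; primariness via ``irreducible $\Rightarrow$ primary'' in a Noetherian ring plus the radical computation $\sqrt{\Gamma}=PR_P$, which you extract from the strict inclusion $\Gamma\subsetneq(\Gamma:PR_P)$ (itself a Nakayama consequence of $\mathcal{I}\,PR_P\subsetneq\mathcal{I}$ and the greatest-ideal property of $\Gamma$); and (3) directly from Theorem~\ref{Gamma}'s characterization of $\Gamma$ as the greatest ideal not containing $\mathcal{I}$. Your version is more self-contained and makes visible exactly where Noetherianness and the nonvanishing of $\mathcal{I}$ are used; the paper's version is shorter but leans on external results whose hypotheses the reader must check. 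The only point worth making explicit in your write-up is that the Nakayama step needs $\mathcal{I}\neq 0$ and $\mathcal{I}$ finitely generated, both of which hold here (strongly hollow ideals are nonzero by the paper's convention, and $R_P$ is Noetherian).
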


\begin{proof}
   For an ideal $J$ of $R$, denote $IR_P$ by $I^e$ and for an ideal $K$ of $R_P$, denote $K\cap R$ by $K^c$. \par (1) In $R_P$, note that $I^e$ is completely strongly hollow by Lemma \ref{cs_noeth}. Then $\Gamma_{I^e}^{R_P}$ is completely strongly irreducible in $R_P$. $I^e\not\subseteq \Gamma_{I^e}^{R_P}$ gives $\Gamma_{I^e}^{R_P}\neq P^e$. $\Gamma_{I^e}^{R_P}$ is $P^e$-primary in $R_P$ by \cite[Corollary.~1.5]{FHO}. Therefore by \cite[Lemma.~2.2(4)]{HRR2002} we have that $(\Gamma_{I^e}^{R_P})^c$ is strongly irreducible in $R$. Then since $R$ is Noetherian, $(\Gamma_{I^e}^{R_P})^c$ is a primary ideal. \par (2) $\Gamma_{I^e}^{R_P}$ is a proper $P^e$-primary ideal which gives that $(\Gamma_{I^e}^{R_P})^c$ is $P$-primary, and hence, $(\Gamma_{I^e}^{R_P})^c\neq P$. By \cite[Corollary.~3.1]{HRR2002}, we have that $((\Gamma_{I^e}^{R_P})^c:_RP)^e$ is a principal ideal. Notice that since $R$ is Noetherian, $P$ is finitely generated, and thus, \[((\Gamma_{I^e}^{R_p})^c:_RP)^e=\Gamma_{I^e}^{R_p}:_{R_p}P^e=IR_P,\] where the last equality follows by Theorem \ref{bijection}.
   \par (3) Follows from the equivalent statement in \cite[Corollary.~3.1]{HRR2002}.
\end{proof}

The above result gives the following extension of Theorem~\ref{bijection}.

\begin{corollary}
    Let $R$ be a Noetherian ring and $P$ a prime ideal of $R$. There exists a bijection between the $P$-primary strongly irreducible ideals which are not equal to $P$ and proper strongly hollow ideals of $R_P$.
\end{corollary}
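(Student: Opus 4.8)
The plan is to realize the desired bijection as the composite of two known correspondences: the local bijection of Theorem~\ref{bijection} applied to $R_P$, and the classical extension–contraction bijection between $P$-primary ideals of $R$ and $P^e$-primary ideals of $R_P$, where $P^e := PR_P$ is the maximal ideal of the Noetherian local ring $R_P$. Indeed, the preceding theorem has already produced the forward map $I^e \mapsto \Gamma_{I^e}^{R_P}\cap R_P$, so the real task is to exhibit a two-sided inverse.

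First I would record the local picture. Since $R$ is Noetherian, so is $R_P$, and by Lemma~\ref{cs_noeth} the strongly hollow and completely strongly hollow ideals of $R_P$ coincide. Theorem~\ref{bijection} then gives a bijection between the strongly hollow ideals of $R_P$ and the completely strongly irreducible ideals of $R_P$, implemented by $I^e \mapsto \Gamma_{I^e}^{R_P}$ with inverse $K \mapsto (K :_{R_P} P^e)$. The whole ring $R_P$ is (trivially) strongly hollow and satisfies $\Gamma_{R_P}^{R_P} = P^e$, since in a local ring the sum of all proper ideals is the maximal ideal; hence restricting to \emph{proper} strongly hollow ideals corresponds exactly to discarding the ideal $P^e$ on the irreducible side.

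Next I would identify the remaining completely strongly irreducible ideals. A strongly irreducible ideal $K \neq P^e$ of $R_P$ is completely strongly irreducible precisely when $K \subsetneq (K :_{R_P} P^e)$, that is, precisely when $P^e \in \mathrm{Ass}(R_P/K)$ (\textit{cf.}\ \cite[Theorem.~1.3(iv)]{FHO}); and since in a Noetherian ring a strongly irreducible ideal is irreducible, hence has a single associated prime, this is equivalent to $K$ being $P^e$-primary and $\neq P^e$ (\textit{cf.}\ \cite[Corollary.~1.5]{FHO}). Thus the proper strongly hollow ideals of $R_P$ are in bijection with the $P^e$-primary strongly irreducible ideals of $R_P$ different from $P^e$.

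Finally I would transport these to $R$. For the prime $P$, the maps $Q \mapsto QR_P$ and $K \mapsto K \cap R$ restrict to mutually inverse bijections between the $P$-primary ideals of $R$ and the $P^e$-primary ideals of $R_P$, with $QR_P \cap R = Q$; strong irreducibility is preserved by contraction via \cite[Lemma.~2.2(4)]{HRR2002}, and hence, these being inverse maps on this class, by extension as well, while $Q \neq P \iff QR_P \neq P^e$. Composing with the bijection of the previous paragraph yields the claimed correspondence, the inverse map being $Q \mapsto (QR_P :_{R_P} P^e)$, which by Theorem~\ref{bijection} undoes $I^e \mapsto \Gamma_{I^e}^{R_P}\cap R_P$. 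The step requiring the most care is the identification in the third paragraph—matching ``completely strongly irreducible in $R_P$'' with ``$P^e$-primary strongly irreducible $\neq P^e$''—since it is there that the Noetherian hypothesis (uniqueness of associated primes) and the associated-prime reading of the colon condition are genuinely used.
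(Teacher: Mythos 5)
Your argument is correct but takes a genuinely different route from the paper's. The paper works entirely on the irreducible side: it takes a $P$-primary strongly irreducible $I\neq P$, supposes $(I:_RP)R_P\subseteq JR_P+KR_P$ with containment in neither summand, invokes \cite[Corollary.~3.1]{HRR2002} to force $J^{ec}+K^{ec}\subseteq I$, and derives a contradiction from the strict inclusion $(I:_RP)R_P\cap R\supsetneq I$, which it establishes by a socle-type computation with powers of $PR_P$; the mutual inverseness of the two maps is left to the preceding theorem. You instead factor the correspondence as a composite of three bijections: Theorem~\ref{bijection} applied to the local ring $R_P$ (legitimate, since Lemma~\ref{cs_noeth} makes strongly hollow and completely strongly hollow coincide there), the identification of the completely strongly irreducible ideals of $R_P$ other than $P^e$ with the $P^e$-primary strongly irreducible ideals different from $P^e$ (using that irreducible ideals of a Noetherian ring are primary, so $K\subsetneq(K:_{R_P}P^e)$ pins down the associated prime), and the classical extension--contraction bijection on primary ideals. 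This buys transparency: bijectivity is automatic, and the bookkeeping of $R_P$ versus $P^e$ explains exactly where ``proper'' enters, whereas the paper's route gives a sharper hands-on reason why the extended colon ideal is strongly hollow. One justification in your third step needs repair: ``contraction preserves strong irreducibility, hence, these being inverse maps, so does extension'' is not a valid inference (a bijection carrying a subclass into a subclass need not do so in the reverse direction). Instead check directly that if $Q$ is $P$-primary and strongly irreducible and $A\cap B\subseteq QR_P$, then contracting gives $A^{c}\cap B^{c}\subseteq Q$, so $A^{c}\subseteq Q$ or $B^{c}\subseteq Q$, and extending back (using $A^{ce}=A$ and $Q^{ec}=Q$) gives $A\subseteq QR_P$ or $B\subseteq QR_P$. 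With that one-line fix your proof is complete.
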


\begin{proof}
    Consider a $P$-primary strongly irreducible ideal $I\neq P$. Suppose $(I:_RP)R_P\subseteq JR_P+KR_P$, for two ideals $JR_P$, $KR_P$ in $R_P$ with $(I:_RP)R_P\not\subseteq J$ and $(I:_RP)R_P\not \subseteq K$. By \cite[Corollary 3.1]{HRR2002}, we must have that $J^{ec}+K^{ec}\subseteq I$, which gives  \[I:_RP\subseteq ((I:_RP)R_p)\cap R\subseteq JR_P\cap R+KR_P\cap R\subseteq I.\] Then $I=(I:_RP)R_P\cap R$,
    which is impossible since in a Noetherian ring, every proper $P$-primary ideal $I$ satisfies
\[
(I :_{R} P)R_p\cap R\ \supsetneq\ I.
\]
Indeed, localize at $P$ and note $IR_P=IR_{P}$ and $PR_P=PR_{P}\subset R_{P}$. Because $I$ is $P$-primary,
$IR_P$ is $PR_P$-primary in $R_{P}$; hence there exists $n\ge1$ with
\[
(PR_P)^{n}\ \subseteq\ IR_P\ \subsetneq\ R_{P}\quad\text{and}\quad (PR_P)^{n-1}\ \nsubseteq\ IR_P.
\]
Choose $x\in (PR_P)^{n-1}\setminus IR_P$. Then
\[
PR_Px\ \subseteq\ PR_P\cdot(PR_P)^{n-1}\ =\ (PR_P)^{n}\ \subseteq\ IR_P,
\]
so $x\in(IR_P :_{R_{P}} PR_P)\setminus IR_P$, and therefore
\[
(IR_P :_{R_{P}} PR_P)\ \supsetneq\ IR_P.
\]
Since $P$ is finitely generated, colons commute with localization on the right, giving
\[
(I :_{R} P)R_P\ =\ (IR_P :_{R_{P}} PR_P)\ \supsetneq\ IR_P.
\]
Contracting yields
\[
(I :_{R} P)R_P\cap R\ \supsetneq\ IR_P\cap R\ =\ I,
\]
a contradiction.
\end{proof}

\begin{corollary}
    Let $R$ be a Noetherian ring. There exists a completely strongly hollow ideal in $R_P$, for some $P\in \mathrm{Spec}(R)$ if and only if there exists a non-prime strongly irreducible ideal in $R$. 
\end{corollary}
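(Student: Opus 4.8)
The plan is to read this off directly from the bijection established in the immediately preceding corollary, glued to the fact (Lemma~\ref{cs_noeth}) that in a Noetherian ring strong hollowness and complete strong hollowness coincide. Throughout I would read ``completely strongly hollow ideal of $R_P$'' as a \emph{proper} such ideal: since $R_P$ is local, $R_P$ itself is always trivially completely strongly hollow, so only the proper ones carry content, and this matches the ``proper strongly hollow ideals of $R_P$'' appearing in the previous corollary. With that convention the whole statement becomes a transcription of that bijection, and the only work is identifying the two classes of ideals being matched.

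For the forward direction, I would start from a proper completely strongly hollow ideal of $R_P$ for some $P\in\mathrm{Spec}(R)$. Since $R_P$ is Noetherian, such an ideal is in particular a proper strongly hollow ideal, so the preceding corollary furnishes a $P$-primary strongly irreducible ideal $I\neq P$ of $R$. It then remains only to observe that such an $I$ cannot be prime: a $P$-primary ideal has radical $P$, and a prime ideal equals its own radical, so $I$ prime would force $I=\sqrt I=P$, contrary to $I\neq P$. Hence $I$ is a non-prime strongly irreducible ideal of $R$, as required.

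For the converse, I would begin with a non-prime strongly irreducible ideal $I$ of $R$ and record the standard chain of implications valid in a Noetherian ring: strongly irreducible $\Rightarrow$ irreducible $\Rightarrow$ primary. The first implication follows because $I=A\cap B$ forces $A\subseteq I$ or $B\subseteq I$, whence $A=I$ or $B=I$; the second is the classical fact that irreducible ideals in Noetherian rings are primary, already invoked in the proof of the theorem above. Writing $P:=\sqrt I$, which is prime, the ideal $I$ is $P$-primary, and non-primeness of $I$ yields $I\neq P$. Thus $I$ is exactly an object in the domain of the preceding bijection, so it corresponds to a proper strongly hollow ideal of $R_P$, which is completely strongly hollow by Lemma~\ref{cs_noeth}. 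This closes the equivalence.

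I expect no genuine obstacle here, only two points of bookkeeping. The first is the properness convention above, needed so that the trivial ideal $R_P$ does not render the left-hand condition vacuously true. The second is the passage between ``$P$-primary strongly irreducible $\neq P$'' and ``non-prime strongly irreducible'': the two descriptions coincide precisely via the radical-equals-itself characterization of primes together with the fact that strongly irreducible ideals are primary in the Noetherian setting. Once these identifications are in place, the corollary is immediate from the preceding bijection, with no new localization or annihilator estimates beyond those already established.
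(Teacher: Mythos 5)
Your proposal is correct and matches the paper's (implicit) intent exactly: the paper states this corollary without proof, treating it as an immediate consequence of the preceding bijection between $P$-primary strongly irreducible ideals $\neq P$ and proper strongly hollow ideals of $R_P$, combined with Lemma~\ref{cs_noeth}; your two bookkeeping points (restricting to proper ideals of $R_P$, and translating ``$P$-primary strongly irreducible $\neq P$'' into ``non-prime strongly irreducible'' via irreducible $\Rightarrow$ primary in Noetherian rings and $\sqrt{I}=I$ for primes) are precisely the identifications needed. Nothing further is required.
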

\begin{proposition}
    Let $I\subseteq J(R)$ be a strongly hollow ideal of a Noetherian ring $R$. Then $I=(\Gamma_I:M)$, for some maximal ideal $M$ if and only if $R$ is local.
\end{proposition}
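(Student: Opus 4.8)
The plan is to prove a biconditional, so I would treat the two directions separately. For the backward direction, assume $(R,M)$ is local with maximal ideal $M=J(R)$. Since $I\subseteq J(R)=M$ and $I$ is strongly hollow in a Noetherian ring, Lemma~\ref{cs_noeth} upgrades $I$ to a completely strongly hollow ideal. I would then invoke Theorem~\ref{bijection}: in a local ring the greatest ideal $\Gamma_I$ avoiding $I$ is completely strongly irreducible, and the explicit form of the bijection gives $I=(\Gamma_I:M)$ directly. Thus the backward implication is essentially a citation of the local bijection already established.

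For the forward direction I would argue by contraposition: assume $R$ is Noetherian but \emph{not} local and show that $I=(\Gamma_I:M)$ cannot hold for any maximal ideal $M$. The key structural fact to exploit is that $I\subseteq J(R)$, so $I$ is contained in \emph{every} maximal ideal of $R$. Since $R$ is not local there exist at least two distinct maximal ideals, say $M$ and $N$. The heart of the argument is to extract a contradiction from the equation $I=(\Gamma_I:M)$ by testing it against a maximal ideal other than $M$. The natural approach is to analyze $\Gamma_I$ as the sum of all ideals not containing $I$: because $I\subseteq N$ for the ``other'' maximal ideal $N$, one can engineer ideals avoiding $I$ that push $\Gamma_I$ up to all of $R$ away from $M$, which would force $(\Gamma_I:M)$ to be too large to equal $I$. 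Concretely, I would try to show $\Gamma_I$ must be $M$-primary (comparing with Theorem~\ref{escapes_jacobian} and the local picture), and then derive that the colon $(\Gamma_I:M)$ behaves correctly only when $M$ is the unique maximal ideal.

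The main obstacle I anticipate is pinning down exactly what $\Gamma_I$ looks like in the non-local setting. When $I\subseteq J(R)$ the clean conclusion $\Gamma_I=M$ from Theorem~\ref{escapes_jacobian} is \emph{not} available, since that theorem required $I\nsubseteq J(R)$. So $\Gamma_I$ could a priori be a large ideal, even all of $R$ minus the obstruction, and I must rule out that $(\Gamma_I:M)$ accidentally collapses back to $I$. The cleanest route is probably to localize: for the equality $I=(\Gamma_I:M)$ to force locality, I would pass to $R_N$ for $N\neq M$ and show that the hypotheses would force $I_N$ to be a nonzero completely strongly hollow ideal of $R_N$ whose associated maximal ideal is $N$, contradicting that $M$ (not $N$) governs the colon. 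Reconciling the two localizations $R_M$ and $R_N$ is where the real work lies.

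An alternative, possibly shorter, strategy would be to use Proposition~\ref{comax_powers}: a strongly hollow $I$ can fail containment in a power $M^n$ for at most one maximal ideal, and since $I\subseteq J(R)$ forces $I$ into every maximal ideal, one can try to show that $I=(\Gamma_I:M)$ singles out $M$ as \emph{the} maximal ideal in whose powers $I$ may fail to sit, thereby forcing every other maximal ideal to coincide with $M$ via the bijection of Theorem~\ref{true_bij}. I would attempt both and keep whichever yields the cleaner contradiction, but I expect the localization argument to be the decisive one.
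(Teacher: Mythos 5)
Your backward direction is correct and coincides with the paper's: in a local Noetherian ring $I$ is completely strongly hollow by Lemma~\ref{cs_noeth}, and the explicit form of the bijection in Theorem~\ref{bijection} gives $I=(\Gamma_I:M)$ for the maximal ideal $M$.

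The forward direction, however, is a plan rather than a proof, and the step you yourself flag as the crux --- that $\Gamma_I$ is $M$-primary --- is precisely the step you never supply. None of your three candidate routes closes it. The mechanism the paper uses is: $I$ is completely strongly hollow (Lemma~\ref{cs_noeth}), hence $\Gamma_I$ is completely strongly irreducible, and by \cite[Corollary~1.5]{FHO} such an ideal is primary to a maximal ideal; the hypothesis $I=(\Gamma_I:M)$ forces that maximal ideal to be $M$ (were it some $M_0\neq M$, primaryness would give $(\Gamma_I:M)=\Gamma_I$, contradicting $I\nsubseteq\Gamma_I$). Noetherianity then yields $M^n\subseteq\Gamma_I$, so $M^{n-1}\subseteq(\Gamma_I:M)$, and $(\Gamma_I:M)\subseteq M$ since otherwise $(\Gamma_I:M)R_M=R_M$ would force $\Gamma_IR_M=MR_M$. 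Hence $(\Gamma_I:M)=I$ is $M$-primary, and the conclusion is immediate without any reconciliation of localizations: an $M$-primary ideal cannot lie in a maximal ideal $N\neq M$, while $I\subseteq J(R)$ puts $I$ in every maximal ideal, so $\operatorname{Max}(R)=\{M\}$. By contrast, your first route (``pushing $\Gamma_I$ up to $R$ away from $M$'') cannot work as stated, since nothing prevents $\Gamma_I$ from being a proper ideal when $R$ is not local; your localization-at-$N$ route never identifies what contradiction arises in $R_N$; and the appeal to Proposition~\ref{comax_powers} together with Theorem~\ref{true_bij} does not connect the colon hypothesis to containment of $I$ in powers of maximal ideals. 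As written, the forward implication is not established.
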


\begin{proof}
    We have already shown the case for a local ring $R$ in Theorem~\ref{ci_gives_ch}. Suppose that $I=(\Gamma_I:M)$ for some maximal ideal $M$. Since $I\subseteq J(R)$, we have that $\Gamma_I$ is not a prime ideal. Furthermore, $\Gamma_I$ is $M$-primary, for some maximal ideal $M$ (see \cite[Corollary 1.5]{FHO}). Since $R$ is Noetherian, there exists $n\in\mathds{N}$ such that $M^n\subseteq \Gamma_I$. Therefore $M^{n-1}\subseteq (\Gamma_I:M)$. Suppose, by way of contradiction that $(\Gamma_I:M)\not\subseteq M$. Then $(\Gamma_I:M)R_M=R_M$ which gives the contradiction $\Gamma_IR_M=MR_M$. Therefore,  $\Gamma_I:M$ is $M$-primary. By assumption, $I$ is $M$-primary. Therefore $I\not\subseteq N$, for all $N\in\mathrm{Max}(R)\textbackslash\{M\}$. By assumption $I\subseteq J(R)$, so we conclude that $R$ is local. 
\end{proof}

\begin{proposition}
    Let $I\subseteq J(R)$ be a strongly hollow ideal of a Noetherian ring $R$. If $I\not\subseteq \mathrm{Nil}(R)$, then $\Gamma_I=M^n$, where $M=\Gamma_I:I$.
\end{proposition}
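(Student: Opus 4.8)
The plan is to localize at the radical of $\Gamma_I$, recognize $I$ there as a power of the maximal ideal, and then carry the local description $\Gamma=(\,\cdot\,)M$ of Theorem~\ref{bijection} back to $R$.

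First I would record the driving facts. Since $R$ is Noetherian, Lemma~\ref{cs_noeth} makes $I$ completely strongly hollow, so by Theorem~\ref{Gamma} the ideal $\Gamma_I$ is the greatest ideal not containing $I$; equivalently every ideal $C$ of $R$ satisfies
\[
I\subseteq C\quad\text{or}\quad C\subseteq\Gamma_I ,
\]
a property I will call $(\star)$. By Theorem~\ref{true_bij} the ideal $\Gamma_I$ is completely strongly irreducible, hence irreducible, hence $P$-primary with $P:=\sqrt{\Gamma_I}$ prime. The point of the hypothesis $I\not\subseteq\mathrm{Nil}(R)$ is to force $P$ to be \emph{maximal}: pick a minimal prime $\mathfrak{q}$ with $I\not\subseteq\mathfrak{q}$, so $(\star)$ gives $\mathfrak{q}\subseteq\Gamma_I\subseteq P$, and pass to the Noetherian domain $D:=R/\mathfrak{q}$. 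For $t\notin P$ one has $t\notin\Gamma_I$, whence $I\subseteq Rt$ by $(\star)$, so the nonzero image $\tilde I\subseteq D$ satisfies $\tilde I\subseteq(\tilde t)$ for every such $t$. If $P/\mathfrak{q}$ were not maximal I could choose a non-unit $\tilde t$ outside it, and Krull's intersection theorem would give $\tilde I\subseteq\bigcap_k(\tilde t^{\,k})=0$, a contradiction. Thus $M:=P$ is maximal and $\Gamma_I$ is $M$-primary.

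Next I would move to $R_M$. Property $(\star)$, together with the fact that the $M$-primary ideal $\Gamma_I$ is contracted from $R_M$, lets me verify that $IR_M$ is strongly hollow in $R_M$: if $IR_M\subseteq AR_M+BR_M$ then $(\star)$ applied to $A+B$ rules out $A+B\subseteq\Gamma_I$ (which would force $I\subseteq\Gamma_I$), so $I\subseteq A+B$, and strong hollowness in $R$ gives $I\subseteq A$ or $I\subseteq B$; the same comparison shows $\Gamma_{IR_M}^{R_M}=\Gamma_I R_M$. The heart of the proof is a splitting computation in the style of Theorem~\ref{I_in_M2}. Let $m$ be the largest integer with $I\subseteq M^m$, which is finite because $IR_M\neq0$ while $\bigcap_kM^kR_M=0$. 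Viewing $V:=M^m/M^{m+1}$ as an $R/M$-vector space and assuming $\dim_{R/M}V\ge2$, I would use Lemma~\ref{UH_avoid_W} (or a pair of hyperplanes, should the image of $I$ fill $V$) to write $V$ as a sum of two proper subspaces each missing the nonzero image of $I$; lifting these to ideals $J,K$ with $M^{m+1}\subseteq J,K$ and $J+K=M^m\supseteq I$ would contradict strong hollowness of $I$. Hence $\dim_{R/M}V=1$, the image of $I$ is all of $V$, and Nakayama yields $IR_M=M^mR_M$.

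Finally I would assemble. The explicit local form of Theorem~\ref{bijection} gives $\Gamma_{IR_M}^{R_M}=(IR_M)(MR_M)=M^{m+1}R_M$, so $\Gamma_IR_M=M^{m+1}R_M$; contracting the $M$-primary ideals (a power of a maximal ideal equals its own contraction from $R_M$) yields $\Gamma_I=M^{m+1}R_M\cap R=M^{m+1}$, i.e.\ $\Gamma_I=M^n$ with $n:=m+1$. For the identification of $M$, the inclusion $M\cdot I\subseteq M^{m+1}=\Gamma_I$ gives $M\subseteq\Gamma_I:I$, while $I\not\subseteq\Gamma_I$ gives $\Gamma_I:I\neq R$; since $M$ is maximal this forces $\Gamma_I:I=M$. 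I expect the main difficulties to be precisely the maximality step (ensuring that $I\not\subseteq\mathrm{Nil}(R)$ is exactly what excludes a non-maximal radical) and the splitting computation, together with the localization compatibilities $\Gamma_{IR_M}^{R_M}=\Gamma_IR_M$ and the contractedness of $\Gamma_I$.
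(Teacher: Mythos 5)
Your proof is correct, but it takes a genuinely different and far more self-contained route than the paper's. The paper's argument is essentially a two-line citation: it notes that $I\not\subseteq\mathrm{Nil}(R)$ forces $\mathrm{Ht}(\Gamma_I)>0$, invokes \cite[Theorem~3.6]{HRR2002} (in a Noetherian ring, a strongly irreducible ideal of positive height is a power of a maximal ideal) to obtain $\Gamma_I=M^n$, and identifies $M$ with $\Gamma_I:I$ via \cite[Proposition~1.20]{Rostami2021}. You instead reprove the needed portion of that external result using only the paper's own toolkit: you get maximality of $\sqrt{\Gamma_I}$ by passing to the Noetherian domain $R/\mathfrak{q}$ and applying Krull's intersection theorem to the chain of principal ideals $(\tilde t^{\,k})$, and you pin down $IR_M=M^mR_M$ (with $m$ the largest power of $M$ containing $I$) by running the graded-piece splitting argument of Theorem~\ref{I_in_M2} on $M^m/M^{m+1}$, before closing with the explicit local form $\Gamma_{IR_M}=(IR_M)(MR_M)$ of Theorem~\ref{bijection}. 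Your route buys independence from \cite{HRR2002}, the sharper extra conclusion that $IR_M=M^{n-1}R_M$ with $n-1$ the exact $M$-adic order of $I$, and a maximality step that is arguably more careful than the paper's height-zero dichotomy (the paper's assertion that $\mathrm{Ht}(\Gamma_I)=0$ makes every prime contain $I$ does not address the prime $\sqrt{\Gamma_I}$ itself). The cost is length, plus a few routine verifications you leave implicit ($IR_M\neq 0$, and the contractedness from $R_M$ of the $M$-primary ideals $\Gamma_I$ and $M^{m+1}$ used to descend the local equalities), all of which do check out.
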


\begin{proof}
    If $\mathrm{Ht}(\Gamma_I)=0$, then every prime ideal lies above $I$, and hence $I\subseteq \mathrm{Nil}(R)$. Therefore, if $I\not\subseteq \mathrm{Nil}(R)$, we must have  $\mathrm{Ht}(\Gamma_I)>0$. Hence, $\Gamma_I=M^n$ (see \cite[Theorem.~3.6]{HRR2002}). To finish the proof, we notice that $\Gamma_I\subset \Gamma_I:I$, and the latter is a maximal ideal \cite[Proposition.~1.20]{Rostami2021}.
\end{proof}

\begin{lemma}\label{Art_max_csi}
    Every strongly irreducible ideal of an Artinian ring $R$ is completely strongly irreducible. 
\end{lemma}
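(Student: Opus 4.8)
The plan is to reduce the arbitrary-intersection condition defining complete strong irreducibility to the finite case, which is already controlled by strong irreducibility, using the descending chain condition available in an Artinian ring. Recall that $J$ is completely strongly irreducible when $\bigcap_{\omega\in\Omega}A_\omega\subseteq J$ forces $A_{\omega_0}\subseteq J$ for some $\omega_0$, whereas strong irreducibility only asserts this for a two-term (hence, by induction, any \emph{finite}) intersection. So the crux is to show that in an Artinian ring every intersection of a family of ideals already coincides with one of its finite subintersections.

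First I would record the elementary induction: if $J$ is strongly irreducible and $A_1\cap\cdots\cap A_n\subseteq J$, then $A_i\subseteq J$ for some $i$. The base case $n=2$ is the definition; the inductive step writes $A_1\cap\cdots\cap A_n=A_1\cap(A_2\cap\cdots\cap A_n)$, applies strong irreducibility once, and then invokes the inductive hypothesis on whichever factor is contained in $J$.

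Next comes the main step. Given a family $\{A_\omega\}_{\omega\in\Omega}$, consider the collection $\mathcal{F}$ of all finite subintersections $A_{\omega_1}\cap\cdots\cap A_{\omega_n}$. This collection is closed under pairwise intersection, hence directed downward by inclusion. Because $R$ is Artinian, $\mathcal{F}$ has a minimal element $F=A_{\omega_1}\cap\cdots\cap A_{\omega_n}$. For any further index $\omega$, the ideal $F\cap A_\omega$ again lies in $\mathcal{F}$ and is contained in $F$, so minimality forces $F\cap A_\omega=F$, that is $F\subseteq A_\omega$. Therefore $F\subseteq\bigcap_{\omega}A_\omega$, and the reverse inclusion is trivial, giving $\bigcap_\omega A_\omega=F$, a finite subintersection.

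Finally I would combine the two: if $\bigcap_{\omega}A_\omega\subseteq J$, rewrite the left-hand side as the finite intersection $F=A_{\omega_1}\cap\cdots\cap A_{\omega_n}\subseteq J$ and apply the induction from the first step to obtain $A_{\omega_i}\subseteq J$ for some $i$, which is exactly complete strong irreducibility. I do not expect a genuine obstacle; the only point demanding care is the reduction of arbitrary intersections to finite ones, which is precisely where Artinianness is essential, since without a minimal element in $\mathcal{F}$ the argument breaks down.
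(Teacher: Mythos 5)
Your proposal is correct and follows essentially the same route as the paper: reduce the arbitrary intersection to a finite subintersection via the descending chain condition, then apply strong irreducibility (extended inductively to finite intersections). The paper merely asserts the reduction to a finite subintersection, whereas you supply the directed-family/minimal-element argument justifying it, so your write-up is the more complete of the two.
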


\begin{proof}
    Let $K$ be a strongly irreducible ideal of an Artinian ring $R$. Then $K$ is completely irreducible (see \cite[page~1]{FHO}) and strongly irreducible.
   Let $\{C_\omega\}_{\omega \in \Omega}$ be a family of ideals of $R$ such that $\bigcap_{\omega\in\Omega}C_\omega\subseteq K$. Since $R$ is Artinian, we have that $\bigcap_{\omega\in\Omega}C_\alpha=\bigcap_{\omega\in F}C_\alpha$, for some finite subset $F\subset\Omega$. Since $K$ is strongly irreducible,  there exists $\omega \in \Omega $ such that $K\supseteq C_\omega$. 
\end{proof}

\begin{proposition}
    In  an Artinian ring $R$, every maximal ideal  corresponds to a minimal completely strongly hollow ideal.
\end{proposition}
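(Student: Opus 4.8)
The plan is to assemble the correspondence from two ingredients already in hand: that in the Artinian setting a maximal ideal is \emph{completely} strongly irreducible, and that completely strongly irreducible ideals are precisely the images of completely strongly hollow ideals under the bijection of Theorem~\ref{true_bij}. First I would observe that a maximal ideal $M$ is prime, and a prime ideal is always strongly irreducible. Since $R$ is Artinian, Lemma~\ref{Art_max_csi} upgrades this to the statement that $M$ is completely strongly irreducible; this is the one place where the Artinian hypothesis is genuinely used. Hence $M$ lies in the codomain of the bijection of Theorem~\ref{true_bij}.

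Next I would run that bijection backwards. The inverse of $I\mapsto\Gamma_I$ sends a completely strongly irreducible ideal to the least ideal failing to be contained in it; applied to $M$ this produces
\[
I_M\ :=\ \bigcap_{J\not\subseteq M}J ,
\]
which by Theorem~\ref{least} is completely strongly hollow and satisfies $\Gamma_{I_M}=M$. By construction $I_M$ is the minimal (indeed least) ideal with respect to \emph{not} being contained in $M$, and this is exactly the completely strongly hollow ideal attached to $M$. Distinct maximal ideals yield distinct $I_M$, since $M$ is recovered intrinsically as $\Gamma_{I_M}$, so $M\mapsto I_M$ is a well-defined injection of $\mathrm{Max}(R)$ into the completely strongly hollow ideals, delivering the asserted correspondence.

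The step I expect to be the main obstacle is not the formal chain above but pinning down the word \emph{minimal} and checking the boundary behaviour. One must resist reading $I_M$ as minimal among \emph{all} completely strongly hollow ideals: writing $R$ as a finite product of local Artinian rings shows that $I_M$ is the whole factor cut out by the primitive idempotent attached to $M$, and this factor can properly contain other completely strongly hollow ideals supported on it, so $I_M$ need not be minimal in that global sense. The correct and provable reading is the one handed to us by Theorem~\ref{least}, namely minimality with respect to avoidance of $M$. I would also dispatch the degenerate cases separately (for instance $M=0$ when $R$ is a field, where $I_M=R$), so that the non-zero hypothesis of Theorem~\ref{Gamma} and the applicability of the bijection are respected.
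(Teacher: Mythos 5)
Your construction coincides with the paper's own proof: both invoke Lemma~\ref{Art_max_csi} to see that a maximal ideal $M$ of an Artinian ring is completely strongly irreducible, and both then pass to the least ideal $I_M=\bigcap_{J\not\subseteq M}J$ not contained in $M$, which is completely strongly hollow by Theorem~\ref{least} and satisfies $\Gamma_{I_M}=M$, so that $M\mapsto I_M$ is injective. The divergence is entirely in the last step. The paper's proof concludes that $I_M$ is minimal \emph{amongst all} completely strongly hollow ideals, citing the maximality of $M$ together with the order-preserving property of the bijection of Theorem~\ref{true_bij}; you decline to draw that conclusion, and you are right to. An order-\emph{preserving} bijection carries maximal elements to maximal elements, so the cited property would in fact make $I_M$ maximal, not minimal, among completely strongly hollow ideals. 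Your structural objection is also borne out by a concrete example: in the local Artinian ring $k[x]/(x^2)$ the maximal ideal $M=(x)$ gives $I_M=R$ (the only ideal not contained in $(x)$), while $(x)$ itself is completely strongly hollow and properly contained in $R$; more generally, as you say, in a product of local Artinian factors $I_M$ is the factor cut out by the idempotent attached to $M$ and can properly contain further completely strongly hollow ideals.

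So the only reading of ``minimal'' that your argument (and, in truth, the paper's) establishes is the relative one you settle on: $I_M$ is the least, hence minimal, ideal with respect to not being contained in $M$. Your proposal is correct as far as it goes; the gap lies in the final sentence of the paper's proof, not in your argument. If the global reading is intended, the statement needs either a reformulation or an extra hypothesis (for instance, $R$ semisimple, in which case each $I_M$ is a simple ideal and global minimality does hold).
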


\begin{proof}
    Let $M$ be a maximal ideal of $R$. Then $M$ is completely strongly irreducible by Lemma \ref{Art_max_csi}. Therefore, there exists a completely strongly hollow ideal $I$ which is the least ideal with respect to not being contained in $M$. By the order preserving property of the bijection between completely strongly irreducible and completely strongly hollow ideals and the maximality of $M$, we have that $I$ is minimal amongst completely strongly hollow ideals. 
\end{proof}

\smallskip
\section{Strongly Hollow ideals and Common Divisors}
\label{4}
We begin this final section by defining a new property of ideals in GCD rings. We denote the greatest common divisor (when it exists) of two elements $x$ and $y$ by $\mathrm{g.c.d.}(x,y)$. 

\begin{definition}
    We say that an ideal $I$ of a GCD ring $R$ satisfies ($\star$) if \[I\subseteq R(\mathrm{g.c.d.}(x,y))\implies I\subseteq Rx \text{ or } I\subseteq Ry \quad \forall x,y\in R.\]
\end{definition}

\begin{proposition}
    Let $I$ be an ideal of a GCD ring $R$. If $I$ is finitely generated and satisfies $(\star)$, then $I$ is strongly hollow. 
\end{proposition}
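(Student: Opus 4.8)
The plan is to reduce the containment $I\subseteq A+B$ to a statement about finitely many principal ideals, where $(\star)$ applies directly, and then to bootstrap the two-element condition $(\star)$ into a finite-family version. Since $I$ is finitely generated, I would first replace $A$ and $B$ by finitely generated subideals: writing $I=(i_1,\dots,i_n)$ and decomposing each $i_k=a_k+b_k$ with $a_k\in A$, $b_k\in B$, set $A_0=(a_1,\dots,a_n)\subseteq A$ and $B_0=(b_1,\dots,b_n)\subseteq B$, so that $I\subseteq A_0+B_0$. Proving $I\subseteq A_0$ or $I\subseteq B_0$ suffices, so I may assume $A=(a_1,\dots,a_m)$ and $B=(b_1,\dots,b_\ell)$ are finitely generated. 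This reduction matters because in a GCD ring the gcd of a \emph{finite} family is guaranteed to exist, whereas an infinite family need not admit one; this is precisely where the finite generation of $I$ is used.

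Next I would promote $(\star)$ to finite families: if $I$ satisfies $(\star)$, then $I\subseteq R\gcd(c_1,\dots,c_t)$ implies $I\subseteq Rc_j$ for some $j$. This follows by induction on $t$ using associativity of the gcd, $\gcd(c_1,\dots,c_t)=\gcd\bigl(c_1,\gcd(c_2,\dots,c_t)\bigr)$: one application of $(\star)$ either places $I$ inside $Rc_1$ or reduces the claim to the shorter family $c_2,\dots,c_t$.

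With these in hand, I would observe that $A+B$ is generated by the finite list $a_1,\dots,a_m,b_1,\dots,b_\ell$, and that the gcd $g$ of this list divides every member of the list, so every generator lies in $Rg$ and hence $A+B\subseteq Rg$. Consequently $I\subseteq A+B\subseteq Rg=R\gcd(a_1,\dots,a_m,b_1,\dots,b_\ell)$, and the finite-family form of $(\star)$ produces a \emph{single} generator $c$ with $I\subseteq Rc$. If $c$ is one of the $a_i$, then $I\subseteq Rc\subseteq A$; if $c$ is one of the $b_j$, then $I\subseteq Rc\subseteq B$. In either case $I\subseteq A$ or $I\subseteq B$, which is exactly strong hollowness.

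The step I expect to be the main obstacle is the choice of \emph{which} elements to feed into $(\star)$. The tempting move is to apply $(\star)$ with $x=\gcd(A)$ and $y=\gcd(B)$, but this only yields $I\subseteq R\gcd(A)$, and since $R\gcd(A)\supseteq A$ in general this conclusion is strictly weaker than the desired $I\subseteq A$. The decisive point is to apply $(\star)$ to the \emph{individual generators}: landing on a single generator $c$ gives a principal ideal $Rc$ sitting inside one of $A$ or $B$, so the strong containment $I\subseteq Rc$ transfers directly to $I\subseteq A$ or $I\subseteq B$.
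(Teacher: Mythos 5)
Your proof is correct and takes essentially the same route as the paper's: use the finite generation of $I$ to reduce to finitely generated subideals of $A$ and $B$, then iterate $(\star)$ along greatest common divisors of the generators via associativity of the gcd. Your explicitly stated finite-family version of $(\star)$ is just a cleaner packaging of the paper's step-by-step gcd construction over $F_1$ and $F_2$, and your final application to the combined generator list plays the role of the paper's last application of $(\star)$ to $\mathrm{g.c.d.}(y_m,x_n)$.
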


\begin{proof}
    Suppose $I\subseteq S+T$ but $I\not\subseteq S$ and $I\not\subseteq T$. Since $I$ is finitely generated, there exist two finite subsets, $F_1$ and $F_2$, of $S$ and $T$ respectively such that \[I\subseteq \sum_{s\in F_1}Rs+ \sum_{t\in F_2}Rt.\] Notice that for each pair of elements $s\in S$ and $t\in T$, we have that $I\not\subseteq R(\mathrm{g.c.d.}(x,y))$. Hence $I\not\subseteq Rs+Rt$. Now, fix $s_1\in F_1$ and notice that for all $t\in F_2$, we have $I\not\subseteq R(\mathrm{g.c.d.}(s_1,t))$. Therefore, pick $t_1\in F_2$ and set $x_1:=\mathrm{g.c.d.}(s_1,t_1).$ Then, pick $t_2\in F_2$ and notice that $I\subseteq R(\mathrm{g.c.d.}(x_1,t_1))$. Set $x_2:=\mathrm{g.c.d.}(x_1,t_2)$ and repeat this process until all elements of $F_2$ have been used. Then we have that $Rx_n$, where $n:=|F_2|$, is a principal ideal which contains $\sum_{t\in F_2}Rt$ and which does not contain $I$. Therefore $I\subseteq \sum_{s\in F_1}Rs+Rx_n$. Now repeat the same process with the elements of $F_1$ and construct a principal ideal $Ry_m$ such that it contains the first summand and does not contain $I$. Then we have that $I\subseteq Ry_m+Rx_n\subseteq R(\mathrm{g.c.d.}(y_m,x_n))$, which contradicts the fact that $I\not\subseteq Ry_m$ and $I\not\subseteq Rx_n$. Hence $I\subseteq S$ or $I\subseteq T$.
\end{proof}

\begin{proposition}
    Let $I$ be an ideal of a GCD ring $R$ which satisfies the ascending chain condition on principal ideals (ACCP). If $I$ satisfies $(\star)$, then $I$ is strongly hollow.
\end{proposition}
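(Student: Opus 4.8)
The plan is to reduce the ACCP case to the finitely generated argument of the preceding proposition, replacing the finite generating sets $F_1,F_2$ by \emph{greatest common divisors of the full ideals} $S$ and $T$. Here ACCP plays exactly the role that finite generation of $I$ played before: it guarantees that these infinite g.c.d.'s exist and are in fact g.c.d.'s of finite subfamilies.

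First I would establish the key lemma enabled by ACCP: in a GCD ring satisfying ACCP, every nonempty subset $X\subseteq R$ admits a greatest common divisor $d=\mathrm{g.c.d.}(X)$, and there is a \emph{finite} subset $X_0\subseteq X$ with $\mathrm{g.c.d.}(X_0)=d$. To see this, consider the family of principal ideals $\{\,R\,\mathrm{g.c.d.}(F):F\subseteq X\text{ finite}\,\}$. For $F\subseteq F'$ one has $\mathrm{g.c.d.}(F')\mid\mathrm{g.c.d.}(F)$, so this family is upward directed under inclusion. ACCP supplies a maximal element $R\,\mathrm{g.c.d.}(X_0)$; directedness upgrades maximal to greatest; and one checks $\mathrm{g.c.d.}(X_0)$ divides every $x\in X$ by comparing $R\,\mathrm{g.c.d.}(X_0)$ with $R\,\mathrm{g.c.d.}(X_0\cup\{x\})$, which must coincide by maximality. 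Hence $d:=\mathrm{g.c.d.}(X_0)=\mathrm{g.c.d.}(X)$.

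With the lemma in hand, I would assume for contradiction that $I\subseteq S+T$ while $I\not\subseteq S$ and $I\not\subseteq T$. Put $d_S:=\mathrm{g.c.d.}(S)$ and $d_T:=\mathrm{g.c.d.}(T)$, realised as g.c.d.'s of finite subsets $F_1\subseteq S$ and $F_2\subseteq T$. Since $d_S$ divides every element of $S$, we get $S\subseteq Rd_S$, and likewise $T\subseteq Rd_T$. Next, exactly as in the finitely generated proposition, I iterate the contrapositive form of $(\star)$ over the finite set $F_1$: each $s\in F_1$ lies in $S$, so $I\not\subseteq Rs$, and $(\star)$ forces $I\not\subseteq R\,\mathrm{g.c.d.}(s,s')$ whenever $I\not\subseteq Rs$ and $I\not\subseteq Rs'$; folding $F_1$ together by associativity of the g.c.d.\ yields $I\not\subseteq R\,\mathrm{g.c.d.}(F_1)=Rd_S$. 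The identical argument over $F_2$ gives $I\not\subseteq Rd_T$.

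Finally, combining the containments, $I\subseteq S+T\subseteq Rd_S+Rd_T\subseteq R\,\mathrm{g.c.d.}(d_S,d_T)$; but since $I\not\subseteq Rd_S$ and $I\not\subseteq Rd_T$, applying $(\star)$ to the pair $(d_S,d_T)$ gives $I\not\subseteq R\,\mathrm{g.c.d.}(d_S,d_T)$, a contradiction. Hence $I\subseteq S$ or $I\subseteq T$, so $I$ is strongly hollow. The main obstacle is precisely the key lemma: without finite generation of $I$ one can no longer truncate $S+T$ to finitely many summands, and ACCP is exactly the hypothesis that lets us collapse the possibly infinite ideals $S,T$ into the finitely realised principal over-ideals $Rd_S,Rd_T$ while retaining the crucial non-containments $I\not\subseteq Rd_S$, $I\not\subseteq Rd_T$.
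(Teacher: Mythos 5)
Your proof is correct and follows essentially the same route as the paper's: both use ACCP to produce, for each of $S$ and $T$, a principal ideal $Rd_S\supseteq S$ (resp.\ $Rd_T\supseteq T$) realized as a finite iterated g.c.d.\ of elements of $S$ (resp.\ $T$), so that the contrapositive of $(\star)$ gives $I\not\subseteq Rd_S$ and $I\not\subseteq Rd_T$, and a final application of $(\star)$ to $R(\mathrm{g.c.d.}(d_S,d_T))\supseteq Rd_S+Rd_T\supseteq I$ yields the contradiction. Your packaging of the ACCP step as a standalone lemma (every nonempty subset has a g.c.d.\ attained on a finite subset) is a slightly cleaner presentation of the paper's Zorn's-lemma argument on the set of iterated g.c.d.'s, but the underlying idea is identical.
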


\begin{proof}
    Suppose $I\subseteq S+T$. Then consider the set of principal ideals generated by some chain of greatest common divisors of elements in $S$, \[U_S=\{Rx \;|\; x=\underbrace{\mathrm{g.c.d.}(...(\mathrm{g.c.d.}(\mathrm{g.c.d.}(\mathrm{g.c.d.}( }_{n\;\text{times}}k_1, k_2 ),k_3),k_4),\dots,k_n ;\quad\; k_i\in K \text{ for every } i\in[1,n]\}.\] Let $C$ be a chain in $U_S$. Since $C$ is a chain of principal ideals, it has a maximal element by ACCP and therefore, an upper bound. Then by Zorn's lemma, $U_S$ has a maximal element which we denote by $x_S$. For all $s\in S$, we have that $R(\mathrm{g.c.d.}(x_S,s))\in U$ and $R(\mathrm{g.c.d.}(x_S,s))\supseteq Rx_S$. Therefore, $R(\mathrm{g.c.d.}(x_S,s))=Rx_S$, and so $s\in Rx_S$. Hence $Rx_S\supseteq S$. Furthermore, since $x_S$ is a finite chain of greatest common divisors of elements in $S$, we have that $I\not\subseteq Rx_S$ since that would contradict $I\not\subseteq S$. Proceed with the same construction over $T$ and denote the resulting element by $x_T$. Then $I\subseteq Rx_S+Rx_T$, but $I\not\subseteq Rx_S$ and $I\not\subseteq Rx_T$, contradicting the assumption that $I$ satisfied $(\star)$. Hence, it must be the case that either $I\subseteq S$ or $I\subseteq T$.  
\end{proof}

\begin{remark}
The converse to the above result holds in any Bezout ring.
\end{remark}

\smallskip

\end{document}